\documentclass[12pt]{amsart}





\pdfminorversion=6 
\pdfsuppresswarningpagegroup=1


\usepackage{amsmath} 
\usepackage{amsthm} 
\usepackage{amssymb} 
\usepackage{mathtools}

\usepackage{cite}
\makeatletter
\newcommand{\citecomment}[2][]{\citen{#2}#1\citevar}
\newcommand{\citeone}[1]{\citecomment{#1}}
\newcommand{\citetwo}[2][]{\citecomment[,~#1]{#2}}
\newcommand{\citevar}{\@ifnextchar\bgroup{;~\citeone}{\@ifnextchar[{;~\citetwo}{]}}}
\newcommand{\citefirst}{\@ifnextchar\bgroup{\citeone}{\@ifnextchar[{\citetwo}{]}}}
\newcommand{\cites}{[\citefirst}
\makeatother

\usepackage{microtype} 
\usepackage{pinlabel} 

%


\usepackage[scaled=0.9]{sourcecodepro} 

\usepackage{enumitem}
\usepackage{color}
\usepackage{subfig, caption}
 



\usepackage{wrapfig}
\captionsetup{margin=0pt,font=small}
\usepackage{pdflscape}
\usepackage{array}
\usepackage{tikz-cd}
\usepackage{longtable, booktabs}

\usepackage[hidelinks, pagebackref]{hyperref}

\makeatletter
\define@key{href}{font}{#1}
\makeatother
\usepackage{xpatch}
\newcommand\hrefdefaultfont{\ttfamily}
\xpatchcmd\href{\setkeys{href}{#1}}{\setkeys{href}{font=\hrefdefaultfont,#1}}{}{\fail}

\renewcommand*{\backref}[1]{}
\renewcommand*{\backrefalt}[4]{
  \ifcase #1 
  [No citations.]
  \or [#2]
  \else [#2]
  \fi }



\let\originalleft\left
\let\originalright\right
\renewcommand{\left}{\mathopen{}\mathclose\bgroup\originalleft}
\renewcommand{\right}{\aftergroup\egroup\originalright}



\newcommand{\calA}{\mathcal{A}}

\newcommand{\calF}{\mathcal{F}}

\newcommand{\calL}{\mathcal{L}}

\newcommand{\calN}{\mathcal{N}}

\newcommand{\calS}{\mathcal{S}}
\newcommand{\calT}{\mathcal{T}}


\newcommand{\CC}{\mathbb{C}}

\newcommand{\HH}{\mathbb{H}}

\newcommand{\RR}{\mathbb{R}}

\newcommand{\ZZ}{\mathbb{Z}}



\newcommand{\st}{\mathbin{\mid}} 
\newcommand{\from}{\colon} 
 











\newcommand{\homeo}{\mathrel{\cong}} 

\newcommand{\isom}{\cong} 


\newcommand{\cover}[1]{{\widetilde{#1}}}

\newcommand{\closure}[1]{{\overline{#1}}}


 
\newcommand{\bdy}{\partial} 


\newcommand{\interior}{{\operatorname{interior}}}

\newcommand{\CP}{\mathbb{CP}} 



\newcommand{\group}[2]{{\langle #1 \st #2 \rangle}}
\newcommand{\subgp}[1]{{\langle #1 \rangle}}

\newcommand{\Stab}{\operatorname{Stab}}



\newcommand{\PSL}{\operatorname{PSL}} 
\newcommand{\SO}{\operatorname{SO}} 

 %



\newcommand{\tr}{\operatorname{tr}} 








\input{header_article.tex}









\theoremstyle{plain}
\newtheorem{XXXtheoremQED}[equation]{Theorem} 
  {\pushQED{\qed}\begin{XXXtheoremQED}}
  {\popQED\end{XXXtheoremQED}}


%
\newcommand{\fakeenv}{} 

\newenvironment{restate}[2]  
{ 
 \renewcommand{\fakeenv}{#2} 
 \theoremstyle{plain} 
 \newtheorem*{\fakeenv}{#1~\ref{#2}} 
 \begin{\fakeenv}
}
{
 \end{\fakeenv}
}

\newenvironment{restated}[2]  
{ 
 \renewcommand{\fakeenv}{#2} 
 \theoremstyle{definition} 
 \newtheorem*{\fakeenv}{#1~\ref{#2}} 
 \begin{\fakeenv}
}
{
 \end{\fakeenv}
}









\newcommand{\Tet}{\operatorname{\mathsf{T}}}
\newcommand{\ideal}{\operatorname{ideal}}
\newcommand{\Fix}{\operatorname{Fix}}

\title[Connecting essential triangulations I]{Connecting essential triangulations I:\\ via 2-3 and 0-2 moves}
\author{Tejas Kalelkar, Saul Schleimer, and Henry Segerman} 
\date{\today}

\begin{document}

\begin{abstract}
Suppose that $M$ is a compact, connected three-manifold with boundary.
We show that if the universal cover has infinitely many boundary components then $M$ has an ideal triangulation which is essential: no edge can be homotoped into the boundary.
Under the same hypotheses, we show that the set of essential triangulations of $M$ is connected via 2-3, 3-2, 0-2, and 2-0 moves. 

The above results are special cases of our general theory.
We introduce $L$--essential triangulations: boundary components of the universal cover receive labels and no edge has the same label at both ends.
As an application, under mild conditions on a representation, we construct an ideal triangulation for which a solution to Thurston's gluing equations recovers the given representation.

Our results also imply that such triangulations are connected via 2-3, 3-2, 0-2, and 2-0 moves.
Together with results of Pandey and Wong, this proves that Dimofte and Garoufalidis' 1-loop invariant is independent of the choice of essential triangulation.
\end{abstract}



\maketitle

\section{Introduction}

\subsection*{Paths of triangulations}

The study of combinatorial triangulations, and equivalences between them, dates back to at least the 1920's.
We refer to Lickorish~\cite{Lickorish99} for an overview of the history.
Matveev~\cite[Theorem~1.2.5]{Matveev07} and Piergallini~\cite[Theorem~1.2]{Piergallini88} independently proved the following: see \refsec{BistellarMoves} for definitions.

\begin{theorem}
\label{Thm:MatveevPiergallini}
Suppose that $M$ is a closed three-manifold.
Suppose that $\calT$ and $\calT'$ are triangulations of $M$ each with exactly one vertex and at least two tetrahedra.
Then $\calT$ and $\calT'$ are connected by a sequence of 2-3 and 3-2 moves.
\end{theorem}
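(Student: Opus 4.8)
The plan is to follow the route of Matveev and Piergallini and move everything into the dual language of \emph{special spines}. A triangulation of a closed three-manifold with a single vertex is never a simplicial complex, but it carries a perfectly good dual cell decomposition, with one dual 0-cell per tetrahedron, one dual 1-cell per triangle, one dual 2-cell per edge, and one dual 3-cell per vertex. Since $\calT$ has exactly one vertex there is a single dual 3-cell, so its complement $P_\calT$---the dual 2-skeleton---is a special polyhedron with $M \setminus P_\calT$ an open ball, that is, a special spine of $M$; a local check shows that each dual 0-cell really is a true vertex (four dual 1-cells and six dual 2-cells meet there, giving the cone on the 1-skeleton of a tetrahedron), so the true vertices of $P_\calT$ correspond bijectively to the tetrahedra of $\calT$. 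Conversely every special spine of $M$ arises in this way from a one-vertex triangulation, and a short computation inside a ball identifies a 2-3 move on $\calT$ with the Matveev--Piergallini move (the $T$-move, i.e.\ $2\to 3$) on $P_\calT$, and a 3-2 move with its inverse. Thus the theorem is equivalent to the statement that \emph{any two special spines of $M$, each with at least two true vertices, are joined by a finite sequence of $T^{\pm 1}$-moves.}

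I would establish this in two stages. First, reduce to the fact that \emph{some} finite list of elementary moves connects any two special spines of $M$: alongside the $T$-move one also allows the ``lune'' and ``bubble'' moves, which change the number of true vertices. This reduction is dual to Pachner's theorem---any two semi-simplicial triangulations of a closed three-manifold are connected by 1-4, 4-1, 2-3 and 3-2 moves, which one deduces from its simplicial-complex version by passing to subdivisions---together with the standard theory of special spines.

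The heart of the matter, and the step I expect to be the main obstacle, is the second stage: eliminating every move that changes the number of true vertices. The key point is that, as soon as two true vertices are available, a lune or bubble move can be replaced by a short sequence of $T^{\pm 1}$-moves, after first performing one or two $T$-moves near the relevant 2-component in order to ``make room'', i.e.\ to bring an auxiliary true vertex next to the site of the modification. Making this rigorous requires enumerating the local pictures and checking carefully that the necessary room is always present; this is precisely where the hypothesis of at least two tetrahedra enters, and why it cannot be weakened. Splicing the resulting local replacements into the move-sequence from the first stage yields a chain consisting only of $T^{\pm 1}$-moves, which the duality of the first paragraph translates back into the desired sequence of 2-3 and 3-2 moves connecting $\calT$ and $\calT'$.
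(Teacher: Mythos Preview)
The paper does not give its own proof of this theorem: it is stated in the introduction as a known result of Matveev~\cite[Theorem~1.2.5]{Matveev07} and Piergallini~\cite[Theorem~1.2]{Piergallini88}, and the paper builds on it rather than reproving it. Your outline follows the route of those original proofs (duality with special spines, $T$-moves corresponding to 2-3 moves, then eliminating auxiliary moves once two true vertices are present), so there is nothing to compare it against within this paper.

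That said, as a sketch of the Matveev--Piergallini argument your proposal is accurate in spirit but thin at the crucial second stage. The phrase ``after first performing one or two $T$-moves near the relevant 2-component in order to `make room'\,'' hides the real content: one must show that \emph{every} occurrence of a lune move in an arbitrary sequence can be traded for $T^{\pm1}$-moves while maintaining the special-spine condition and at least two true vertices throughout the intermediate steps. This is not a single local replacement but an inductive argument on the sequence, and the case analysis (in Matveev's book, or Piergallini's paper) is where the work lies. Your proposal acknowledges this is ``the main obstacle'' but does not indicate how the induction is set up or why the intermediate spines remain special with enough vertices; a reader could not reconstruct the proof from what you have written.
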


Amendola~\cite[Theorem~2.1]{Amendola05} improves this to deal with ideal triangulations of three-manifolds with boundary.
(Here we allow two-sphere boundary components.)
Together, the above results show that the set of triangulations of $M$ (having a fixed number of material vertices and with at least two tetrahedra) is connected via 2-3 and 3-2 moves.
This implies that the Turaev-Viro invariant~\cite{TuraevViro92} is independent of the triangulation appearing in its definition.
See also~\cite{BarrettWestbury96}.

Other three-manifold invariants require more: 
for example, the 1-loop invariant of Dimofte and Garoufalidis~\cite[Definition~1.2]{DimofteGaroufalidis13} requires its defining ideal triangulation admit a solution to Thurston's gluing equations~\cite[Chapter~4]{Thurston80} corresponding to the complete hyperbolic structure.
Such a solution exists if and only if the triangulation is \emph{essential}~\cite[Theorem~1]{SegermanTillmann11}.
Our \refthm{ConnectivityWith0-2} implies the following.

\begin{corollary}
\label{Cor:ConnectivityWith0-2}
Suppose that $M$ is a compact, connected three-manifold with boundary.
Suppose that the universal cover $\cover{M}$ has infinitely many boundary components.
Then the set of essential ideal triangulations of $M$ is connected via 2-3, 3-2, 0-2, and 2-0 moves. \qed
\end{corollary}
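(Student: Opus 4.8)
The plan is to obtain the corollary as the special case of \refthm{ConnectivityWith0-2} corresponding to the \emph{finest} (tautological) labeling of the boundary components of $\cover{M}$. Recall that the general theory equips $\cover{M}$ with a set $\Lambda$ of labels together with an assignment of a label to each component of $\bdy\cover{M}$, and declares an ideal triangulation $\calT$ of $M$ to be $L$--essential when no edge of the induced ideal triangulation $\cover{\calT}$ of $\cover{M}$ has its two ideal endpoints on boundary components of $\cover{M}$ carrying the same label. I would take $\Lambda = \pi_0(\bdy\cover{M})$ itself, with each boundary component labelled by its own component of $\cover{M}$. Since $M$ is compact, $\bdy M$ has finitely many components, so the deck group $\pi_1(M)$ has only finitely many orbits on $\Lambda$; thus this is a legitimate instance of the construction.

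For this labeling I would first record that $L$--essentiality coincides with essentiality. This rests on the standard dictionary: an edge $e$ of $\calT$ can be homotoped rel endpoints into $\bdy M$ if and only if one — equivalently, every — lift $\tilde{e}$ of $e$ to $\cover{M}$ has both of its ideal endpoints on a single component $\tilde{B}$ of $\bdy\cover{M}$. (The two halves of the ``one $\iff$ every'' follow since the lifts of $e$ are permuted transitively by $\pi_1(M)$, which sends boundary components to boundary components.) The forward implication is obtained by lifting such a homotopy: its terminal arc is connected and lies in $\bdy\cover{M}$, hence in one component. The reverse implication uses that $\cover{M}$ is simply connected, so the loop formed by $\tilde{e}$ together with an arc of $\tilde{B}$ joining its endpoints bounds a disc; hence $\tilde{e}$ is homotopic rel endpoints into $\tilde{B}$, and projecting this homotopy down to $M$ pushes $e$ into $\bdy M$. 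Under the tautological labeling ``both ideal endpoints on the same boundary component'' is literally ``both endpoints have the same label'', so $\calT$ is essential precisely when it is $L$--essential.

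Finally I would match the hypotheses: the hypothesis of \refthm{ConnectivityWith0-2} — which for the tautological labeling amounts to asking that infinitely many labels occur, i.e.\ that $\cover{M}$ has infinitely many boundary components — is precisely our standing assumption. Applying \refthm{ConnectivityWith0-2} then shows that the $L$--essential ideal triangulations of $M$ are connected through 2-3, 3-2, 0-2, and 2-0 moves, which by the previous paragraph is exactly the assertion of the corollary. All of the substance lies in \refthm{ConnectivityWith0-2}; there is no genuine obstacle in this reduction, the only point deserving a word of care being the dictionary above relating edges homotopic into $\bdy M$ to lifts with both ends on one boundary component of $\cover{M}$.
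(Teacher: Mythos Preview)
Your proposal is correct and follows exactly the approach the paper intends: the corollary is stated with a \qed and preceded by ``Our \refthm{ConnectivityWith0-2} implies the following'', and the reduction is precisely the identity labelling of \refexa{Essential}, for which \refrem{IdealEssential} records that $L$--essential coincides with essential. Your added paragraph unpacking the dictionary between ``edge homotopic into $\bdy M$'' and ``lift with both ideal endpoints on one component of $\bdy\cover{M}$'' is more detail than the paper gives (it simply cites \cite{HodgsonRubinsteinSegermanTillmann15}), and your aside about finitely many $\pi_1(M)$--orbits is unnecessary since \refdef{Labelling} requires only equivariance, but neither point affects correctness.
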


Dimofte and Garoufalidis~\cite[Theorem~1.4]{DimofteGaroufalidis13} check that the 1-loop invariant is unchanged by 2-3 moves between essential triangulations; Pandey and Wong~\cite[Proposition~5.1]{PandeyWong24}
check that it is unchanged by 0-2 moves between essential triangulations.
Thus we have the following, answering Question 1.6 of \cite{DimofteGaroufalidis13} in the affirmative.

\begin{corollary}
\label{Cor:1-loop}
The 1-loop invariant (for a discrete and faithful representation) is independent of the choice of essential triangulation.  \qed 
\end{corollary}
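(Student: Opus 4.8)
The plan is to deduce this from three already-available inputs: our connectivity statement \refcor{ConnectivityWith0-2}, the invariance of the 1-loop invariant under 2-3 and 3-2 moves (Dimofte--Garoufalidis), and its invariance under 0-2 and 2-0 moves (Pandey--Wong). So the corollary is a formal consequence of the main theorem of the paper together with two cited computations; there is essentially nothing left to do once \refcor{ConnectivityWith0-2} is in hand, and I would present it as a short wrap-up.

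First I would set the stage. A discrete and faithful representation $\rho$ here is the holonomy of a complete, finite-volume hyperbolic structure on the interior of $M$, so $M$ is the compact core of a cusped hyperbolic three-manifold $\HH^3/\Gamma$: compact and connected, with boundary a disjoint union of tori. I would check that the hypothesis of \refcor{ConnectivityWith0-2} holds by realizing $\cover{M}$ as $\HH^3$ with the $\Gamma$-orbit of a collection of open horoballs removed; its boundary components are the bounding horospheres, which are indexed by the $\Gamma$-cosets of the peripheral subgroups, and since each peripheral subgroup has infinite index in the lattice $\Gamma$ there are infinitely many of them. Next I would recall, via \cite[Theorem~1]{SegermanTillmann11}, that an ideal triangulation $\calT$ of $M$ admits a solution to Thurston's gluing equations recovering $\rho$ exactly when $\calT$ is essential; hence the ideal triangulations for which the 1-loop invariant $\tau$ (with respect to $\rho$) is even defined are precisely the essential ones, and it suffices to show that $\tau$ is constant on this set.

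Then, given two essential ideal triangulations $\calT$ and $\calT'$, I would invoke \refcor{ConnectivityWith0-2} to produce a finite sequence $\calT = \calT_0, \dots, \calT_n = \calT'$ of essential ideal triangulations with consecutive terms related by a 2-3, 3-2, 0-2, or 2-0 move. By \cite[Theorem~1.4]{DimofteGaroufalidis13} each 2-3 or 3-2 move between essential triangulations preserves $\tau$, and by \cite[Proposition~5.1]{PandeyWong24} each 0-2 or 2-0 move between essential triangulations preserves $\tau$; crucially, both statements are exactly of the form ``move between two essential triangulations,'' which is what the sequence supplies at every step. Telescoping gives $\tau(M,\rho,\calT) = \tau(M,\rho,\calT')$, as desired.

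The main obstacle is not in this wrap-up at all but in \refcor{ConnectivityWith0-2} itself --- that is, \refthm{ConnectivityWith0-2} --- which is the substance of the paper. The conceptual point this argument makes visible is why the weaker Matveev--Piergallini/Amendola connectivity (via 2-3 and 3-2 moves only), combined with Dimofte--Garoufalidis, does not already suffice: a path in the 2-3/3-2 graph between two essential triangulations need not remain essential, so the invariance statements cannot be applied along it. Enlarging the move set to include 0-2 and 2-0 moves is exactly what allows the path to be kept essential, and it is precisely this enlargement that forces one to also use Pandey and Wong's 0-2 computation; keeping track of essentiality at every intermediate triangulation is part of what \refcor{ConnectivityWith0-2} delivers.
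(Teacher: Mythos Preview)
Your proposal is correct and follows exactly the approach the paper takes: the corollary is deduced immediately from \refcor{ConnectivityWith0-2} together with the 2-3 invariance of Dimofte--Garoufalidis and the 0-2 invariance of Pandey--Wong, and the paper's proof is literally the sentence preceding the statement plus a \qed. Your added verification that a cusped hyperbolic manifold satisfies the hypothesis of \refcor{ConnectivityWith0-2} (infinitely many horospheres in the universal cover) and your remark on why Matveev--Piergallini alone does not suffice are useful elaborations, but the argument is the same.
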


\refcor{ConnectivityWith0-2} also gives a direct, geometric proof that the Bloch invariant is independent of its defining triangulation, reproving~\cite[Theorem~1.1]{NeumannYang99}.

Suppose that $\calT$ is an essential triangulation of a hyperbolic manifold $M$.
Let $S(\calT)$ be the variety of solutions to Thurston's gluing equations on $\calT$;
let $S_0(\calT)$ be the component containing the complete hyperbolic structure.
Another consequence of \refcor{ConnectivityWith0-2} is that if $\calT$ and $\calT'$ are two such essential triangulations then $S_0(\calT)$ and $S_0(\calT')$ are birationally equivalent~\cite[Proposition~2.22]{PandeyWong24}.
Furthermore, each has a smooth point at (the shapes giving) the complete hyperbolic structure~\cite[Proposition~2.23]{PandeyWong24}.

\subsection*{Essential triangulations}

Here is another purely combinatorial consequence of our work.

\begin{corollary}
\label{Cor:InfiniteIFFEssential}
Suppose that $M$ is a compact, connected three-manifold with boundary.
Suppose that $\pi_1(M)$ is infinite. 
Then the universal cover of $M$ has infinitely many boundary components if and only if $M$ admits an essential triangulation.
\end{corollary}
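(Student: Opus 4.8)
The plan is to establish the two implications by quite different means. The forward implication --- if $\cover{M}$ has infinitely many boundary components then $M$ admits an essential triangulation --- is the existence half of our main result: \refthm{ConnectivityWith0-2} asserts that under this hypothesis the essential ideal triangulations of $M$ form a (nonempty) connected set. So the work lies entirely in the reverse implication, which I would prove in the contrapositive: assuming $\pi_1(M)$ is infinite and $\cover{M}$ has only finitely many boundary components, I would show $M$ has no essential ideal triangulation.

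Suppose for contradiction that $\calT$ is one, and lift it to an ideal triangulation $\cover{\calT}$ of $\cover{M}$. Its ideal vertices are in bijection with the boundary components of $\cover{M}$, so there are finitely many; write $V$ for their number. By the standard criterion, a lift of an inessential edge has both of its ends on a single component of $\bdy\cover{M}$ (that component being connected and $\cover{M}$ simply connected), so essentiality of $\calT$ means that no edge of $\cover{\calT}$ has this property; hence the four ideal vertices of every tetrahedron of $\cover{\calT}$ are pairwise distinct, and since $\calT$ has at least one tetrahedron, $V\ge 4$.

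The next step is to pass to a finite cover in which every boundary surface carries all of $\pi_1$. Since $\cover{M}$ has only $V$ boundary components, each of their $\pi_1(M)$--stabilizers has finite index; let $M_1\to M$ be the finite cover corresponding to the intersection $\Gamma$ of these stabilizers. Then $\Gamma$ is infinite, $\calT$ pulls back to an essential triangulation of $M_1$ (essentiality passes to covers), and $\Gamma$ fixes each boundary component of $\cover{M}$, so these descend to $V$ distinct boundary components of $M_1$. Comparing $V$ with $\sum_{B\subset\bdy M_1}[\pi_1(M_1):\image(\pi_1(B)\to\pi_1(M_1))]$, which also counts the boundary components of the universal cover $\cover{M_1}=\cover{M}$, forces $\pi_1(B)\to\pi_1(M_1)$ to be onto for every boundary component $B$ of $M_1$; and after replacing $M_1$ by its orientation double cover if need be --- in which every boundary component stays connected, precisely because its $\pi_1$ surjects --- I may assume $M_1$ is orientable, with everything above still in force.

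Now I would run a homological count. Since $\pi_1(M_1)$ is an infinite quotient of the closed surface group $\pi_1(B_i)$, no $B_i$ is a sphere, so each $B_i$ has genus $g_i\ge 1$ and $H_1(B_i;\QQ)\twoheadrightarrow H_1(M_1;\QQ)$, whence $\dim_{\QQ}H_1(M_1;\QQ)\le 2g_i$ for all $i$. By ``half lives, half dies'' for the compact orientable manifold $M_1$, the image of $H_1(\bdy M_1;\QQ)$ in $H_1(M_1;\QQ)$ --- which is all of $H_1(M_1;\QQ)$, by surjectivity --- has dimension $\tfrac12\dim_{\QQ}H_1(\bdy M_1;\QQ)=\sum_i g_i$. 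Summing $\dim_{\QQ}H_1(M_1;\QQ)\le 2g_i$ over the $V$ components gives $V\cdot\dim_{\QQ}H_1(M_1;\QQ)\le 2\sum_i g_i=2\dim_{\QQ}H_1(M_1;\QQ)$, and as $\dim_{\QQ}H_1(M_1;\QQ)=\sum_i g_i\ge V\ge 1$ this yields $V\le 2$, contradicting $V\ge 4$. The main obstacle is the passage to the cover $M_1$ in the third step: the real point is that ``finitely many boundary components upstairs, $\pi_1$ infinite'' can be promoted to the rigid situation in which every boundary surface is $\pi_1$--surjective, after which the count is routine; the things to check carefully are that essentiality, orientability, and connectedness of boundary components survive these covers, and the precise lifting criterion for essential edges used in the second step.
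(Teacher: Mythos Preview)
Your backward implication is correct and takes a genuinely different route from the paper. The paper deduces it from \refcor{ManyRegions}, whose proof (via \reflem{ManyRegions}) is three-manifold topology: after arranging that some boundary component is $\pi_1$--surjective, one invokes Hempel's theorem and the disk theorem to identify $M$ as a compression body, and then reads off that either $|\Delta_M|\le 2$ or some interior boundary component has infinite-index image. Your argument instead passes to the finite cover $M_1$ in which \emph{every} boundary component is $\pi_1$--surjective, and then runs a clean homological count using half-lives--half-dies to force $V\le 2$. Your approach avoids the Loop/Disk theorem and the compression-body classification, trading them for Poincar\'e--Lefschetz duality; it is arguably more elementary and more self-contained. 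The paper's approach, on the other hand, proves the stronger intermediate statement $|\Delta_M|\ge 3 \Rightarrow |\Delta_M|=\infty$, which may be of independent interest.

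One small correction on the forward direction: \refthm{ConnectivityWith0-2} and \refcor{ConnectivityWith0-2} assert only that the set of essential triangulations is \emph{connected}, not that it is nonempty (the empty set is connected). Nonemptiness is the content of \refthm{ExistsTriangulation} applied with the identity labelling (as in \refexa{Essential}); that is what you should cite.
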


\begin{proof}
The forward direction is a special case of \refthm{ExistsTriangulation} (with $L$ the identity labelling, as in \refexa{Essential}). 
The backwards direction is a special case of \refcor{ManyRegions}.
\end{proof}

Hodgson, Rubinstein, Tillmann, and the third author~\cite[page~1105]{HodgsonRubinsteinSegermanTillmann15} construct essential triangulations given various hypotheses.
\refcor{InfiniteIFFEssential} recovers their second, third, and fourth constructions.
However, our result is more general; for example, we also deal with various reducible manifolds and the complements of almost all links.

\subsection*{$L$-essential triangulations}

Our results also apply more generally.
Suppose that $M$ is a compact, connected three-manifold with boundary.
Let $\cover{M}$ be the universal cover of $M$.
Suppose that $\calT$ is an ideal triangulation of $M$.
Let $\cover{\calT}$ be the induced triangulation of $\cover{M}$.

Let $\Delta_M$ be the set of boundary components of $\cover{M}$.
Given any set of \emph{labels} $\calL$, equipped with an action of $\pi_1(M)$, 
we say that a \emph{labelling} is a $\pi_1(M)$--equivariant function $L$ from $\Delta_M$ to $\calL$. 
We say that $\calT$ is \emph{$L$--essential} if no edge of $\cover{\calT}$ has the same label at both ends.
With the above notation, our main results are as follows.

\begin{restate}{Theorem}{Thm:ExistsTriangulation}
Suppose that $L$ is a labelling of $\Delta_M$ with infinite image.
Then there is an $L$--essential ideal triangulation of $M$.
\end{restate}

\begin{restate}{Theorem}{Thm:ConnectivityWith0-2}
Suppose that $L$ is a labelling of $\Delta_M$ with infinite image.
Then the set of $L$--essential ideal triangulations of $M$ is connected via 2-3, 3-2, 0-2, and 2-0 moves.
\end{restate}

If we take $\calL = \Delta_M$ and choose $L$ to be the identity function, then a triangulation is $L$--essential if and only if it is essential.

Coset spaces for $\pi_1(M)$ give more examples of labellings.
Applying \refthm{ExistsTriangulation} to these yields a triangulation with no edges in the given subgroup (see \refcor{SubgroupAvoiding}).
For example, we construct triangulations with no null-homologous edges (\refcor{NullHomologous}) and triangulations of Seifert fibred spaces with no edges lying in the fibration (\refcor{Seifert}).
Furthermore, by \refthm{ConnectivityWith0-2}, such sets of triangulations are connected via 2-3, 3-2, 0-2, and 2-0 moves.

Suppose that $\rho \from \pi_1(M) \to \PSL(2,\CC)$ is a representation.
A labelling with $\calL = \bdy_\infty\HH^3$, and equivariant with respect to $\rho$, is called an \emph{anchoring} of $\rho$ (\refdef{Anchoring}).
Anchorings relate to Thurston's gluing equations as follows. 

\begin{restate}{Lemma}{Lem:RecoverRho}
Suppose that $\rho$ is a representation and $\calT$ is an ideal triangulation.
Then $\calT$ is $L$--essential for some anchoring $L$ if and only if $\calT$ is \emph{$\rho$--regular} in the sense of \cite[Definition~4.2]{DimofteGaroufalidis13}. 
(That is, Thurston's gluing equations, over $\calT$, admit solutions.
Moreover, one of these has holonomy representation conjugate to $\rho$.)
\end{restate}

Not every representation $\rho$ has an anchoring $L$ which admits an $L$--essential triangulation.
However, this is the case under a mild side hypothesis, namely that $\rho$ is \emph{infinitely anchorable}: 
see Definitions~\ref{Def:Anchorable} and~\ref{Def:InfinitelyAnchorable} and \refcor{RepTriangulation}.
In \refthm{InfinitelyAnchorableZariskiNeighbourhood} we prove that representations sufficiently close to a discrete and faithful representation are infinitely anchorable.
Our main results give the following.

\begin{corollary}
\label{Cor:InfinitelyAnchorableExistsConnected}
Suppose that $M$ is a compact, connected three-manifold with boundary.
Suppose that $\rho \from \pi_1(M) \to \PSL(2,\CC)$ is an infinitely anchorable representation.
Then there is an anchoring $L$ of $\rho$ 
such that the set of $L$--essential ideal triangulations of $M$ is non-empty and is connected via 2-3, 3-2, 0-2, and 2-0 moves. \qed
\end{corollary}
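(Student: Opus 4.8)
The plan is to deduce the corollary directly from \refthm{ExistsTriangulation} and \refthm{ConnectivityWith0-2}, once a suitable labelling has been extracted from the hypothesis. The only point requiring care is to recognise that an anchoring is an instance of the general notion of labelling, with label set $\calL = \bdy_\infty\HH^3$ equipped with the $\pi_1(M)$--action induced by $\rho$; granting this, the property of being \emph{infinitely anchorable} says precisely that $\rho$ admits such a labelling with infinite image, so the general theorems apply verbatim.

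In more detail: first, unpack \refdef{InfinitelyAnchorable} to obtain an anchoring $L \from \Delta_M \to \bdy_\infty\HH^3$ of $\rho$ whose image is an infinite subset of $\bdy_\infty\HH^3$. Second, note that $L$ is a labelling of $\Delta_M$ in the sense of the general theory: it is $\pi_1(M)$--equivariant by \refdef{Anchoring} (with $\pi_1(M)$ acting on the codomain through $\rho$), and by hypothesis it has infinite image. Third, apply \refthm{ExistsTriangulation} to conclude that $M$ admits an $L$--essential ideal triangulation, so the set of $L$--essential ideal triangulations of $M$ is non-empty. Fourth, apply \refthm{ConnectivityWith0-2} to this same $L$ to conclude that the set is connected via 2-3, 3-2, 0-2, and 2-0 moves. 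Taking this $L$ as the anchoring in the statement completes the proof.

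There is essentially no new obstacle at the level of this corollary: all of the substance lives in \refthm{ExistsTriangulation} and \refthm{ConnectivityWith0-2} (and, for exhibiting representations that satisfy the hypothesis, in \refthm{InfinitelyAnchorableZariskiNeighbourhood}). The one genuinely necessary observation — that an anchoring with infinite image qualifies as a labelling with infinite image for the purposes of those theorems — is immediate from the set-up, since the general theory permits an arbitrary $\pi_1(M)$--set of labels and $\bdy_\infty\HH^3$ with the $\rho$--action is such a set. One could equally cite \refcor{RepTriangulation} for the non-emptiness assertion and use \refthm{ConnectivityWith0-2} only for connectivity, but going back to the two main theorems keeps the dependencies transparent.
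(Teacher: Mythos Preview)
Your proof is correct and matches the paper's own approach: the corollary is stated with a bare \qed, relying on the preceding text (``Our main results give the following'') to indicate that it follows immediately from \refthm{ExistsTriangulation} and \refthm{ConnectivityWith0-2} once one has an anchoring with infinite image. The only step you gloss over slightly is that \refdef{InfinitelyAnchorable} does not literally hand you an anchoring with infinite image; rather, it gives a point $z \in \Fix_\rho(c)$ with infinite $\Gamma$--orbit, and one then constructs $L$ as in \reflem{Anchoring} while insisting $L(c) = z$ so that the image contains $\Gamma \cdot z$---but this is exactly what the paper does (tersely) in the proof of \refcor{RepTriangulation}, and you correctly flag that route as an alternative.
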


This allows us to generalise \refcor{1-loop} to representations $\rho$ which are not discrete and faithful.
Suppose that $L$ is an anchoring of $\rho$.
(Near, but not at, a discrete and faithful representation, this requires choosing one of the two fixed points of each peripheral group as our ``anchor''. 
Dunfield discusses this in the special case of a once-cusped manifold~\cite[Section~10.1]{BRVD05}.)

We now restrict attention to $L$--essential triangulations: 
Dimofte and
Garoufalidis~\cite[Theorem~4.1]{DimofteGaroufalidis13} prove that the 1-loop invariant is unchanged by 2-3 moves; 
Pandey and Wong~\cite[Proposition~5.1]{PandeyWong24} prove that it is unchanged by 0-2 moves.
\refcor{InfinitelyAnchorableExistsConnected} then gives the following.

\begin{corollary}
\label{Cor:1-loopAnchorable}
Suppose that $\rho$ is an infinitely anchorable representation. 
Then there is an anchoring $L$ of $\rho$ such that
the 1-loop invariant for $\rho$ and $L$ is independent of the choice of $L$--essential triangulation appearing in its definition.  \qed 
\end{corollary}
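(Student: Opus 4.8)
The plan is to deduce this formally from \refcor{InfinitelyAnchorableExistsConnected} together with the known invariance of the 1-loop invariant under the four moves. First I would invoke \refcor{InfinitelyAnchorableExistsConnected} to fix an anchoring $L$ of $\rho$ for which the set $\calE_L$ of $L$--essential ideal triangulations of $M$ is non-empty and connected via 2-3, 3-2, 0-2, and 2-0 moves. Writing $\tau(\calT)$ for the 1-loop invariant attached to $\rho$, $L$, and a triangulation $\calT$, the content to verify is then: (i) $\tau(\calT)$ is defined for every $\calT \in \calE_L$; and (ii) $\tau$ is constant across each of the four moves when both of its triangulations lie in $\calE_L$.

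For (i), apply \reflem{RecoverRho}: since $\calT$ is $L$--essential for the anchoring $L$, Thurston's gluing equations over $\calT$ admit a solution whose holonomy representation is conjugate to $\rho$, i.e.\ $\calT$ is $\rho$--regular in the sense of \cite[Definition~4.2]{DimofteGaroufalidis13}. This is exactly the hypothesis under which the 1-loop invariant is defined, so $\tau(\calT)$ makes sense for every triangulation in $\calE_L$. For (ii), \cite[Theorem~4.1]{DimofteGaroufalidis13} gives that $\tau$ is unchanged by a 2-3 move between two triangulations in $\calE_L$, and \cite[Proposition~5.1]{PandeyWong24} gives that $\tau$ is unchanged by a 0-2 move between two triangulations in $\calE_L$; invariance under 3-2 and 2-0 moves then follows, since these are the inverse moves.

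To finish: given $\calT, \calT' \in \calE_L$, \refcor{InfinitelyAnchorableExistsConnected} provides a finite sequence $\calT = \calT_0, \calT_1, \ldots, \calT_n = \calT'$ of $L$--essential triangulations in which consecutive terms differ by one of the four moves. By (i) each $\tau(\calT_i)$ is defined, and by (ii) $\tau(\calT_i) = \tau(\calT_{i+1})$ for all $i$, so $\tau(\calT) = \tau(\calT')$. Since $\calE_L \neq \emptyset$, this establishes that the 1-loop invariant for $\rho$ and $L$ does not depend on the $L$--essential triangulation chosen in its definition, which is the claim. (Specialising to a discrete and faithful $\rho$ and the identity-type anchoring recovers \refcor{1-loop}.)

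The main obstacle is not the logical assembly but confirming that \cite[Theorem~4.1]{DimofteGaroufalidis13} and \cite[Proposition~5.1]{PandeyWong24} are being used in the right generality. The 1-loop invariant depends not merely on $\calT$ but on a chosen solution of the gluing equations, so one must check that the stated invariance under 2-3 and 0-2 moves is phrased in terms of — and actually transports — the solution with holonomy $\rho$ produced by \reflem{RecoverRho}, rather than a solution pinned to the complete structure in the discrete and faithful setting. This is precisely where the hypothesis that $L$ is an \emph{anchoring} of $\rho$ (not merely that the triangulations are essential) is needed, so the argument genuinely relies on the $L$--essential framework developed in this paper.
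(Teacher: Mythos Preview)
Your proposal is correct and follows essentially the same approach as the paper. The paper treats this corollary as immediate: the sentence preceding it cites \cite[Theorem~4.1]{DimofteGaroufalidis13} and \cite[Proposition~5.1]{PandeyWong24} for invariance under 2-3 and 0-2 moves, and then simply says that \refcor{InfinitelyAnchorableExistsConnected} gives the result; your write-up spells out the same logic in full, including the use of \reflem{RecoverRho} to confirm the invariant is defined on every $L$--essential triangulation.
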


In \refprop{HMP} we follow \cite{HowieMathewsPurcell21} and use our techniques to build an $L$--essential triangulation suitable for Dehn filling.
Pandey and Wong use this and \refthm{InfinitelyAnchorableZariskiNeighbourhood} in their proof of the following~\cite[Corollary~1.16]{PandeyWong24}.

\begin{theorem}[Pandey-Wong]
\label{Thm:PW}
The 1-loop invariant is equal to the adjoint twisted Reidemeister torsion for all hyperbolic three-manifolds obtained by doing sufficiently long Dehn-fillings on the boundary components of any fundamental shadow link complement.
\end{theorem}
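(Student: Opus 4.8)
The plan is to reduce to the fundamental shadow link complement $N$ itself, where the $1$--loop conjecture is already known (and can be checked directly on the decomposition of $N$ into regular ideal octahedra, subdivided into tetrahedra), and then to propagate the identity across long Dehn fillings. On the torsion side the relevant tool is a surgery formula (in the style of Porti) expressing the adjoint Reidemeister torsion of the filled manifold $N_{p/q}$ in terms of the torsion of $N$ together with a factor recording the holonomy of the new core curves; on the combinatorial side one needs the matching statement that the $1$--loop invariant of an ideal triangulation of $N$ that is ``suitable for Dehn filling'' factors in the same way.

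First I would fix a geometric ideal triangulation $\calT_N$ of $N$ coming from the octahedral decomposition, for which the $1$--loop invariant equals the adjoint torsion of $N$. Next, for a tuple of filling slopes, I would invoke \refprop{HMP} (following Howie--Mathews--Purcell) to build an ideal triangulation $\calT_{p/q}$ of $N$ adapted to the filling --- concretely $\calT_N$ together with layered-solid-torus material along the cusps --- whose deformation variety contains the incomplete structure whose metric completion is $N_{p/q}$. By Thurston's hyperbolic Dehn surgery, for all sufficiently long fillings the holonomy of that incomplete structure, pulled back to $\pi_1(N)$, lies in any prescribed Zariski neighbourhood of the discrete faithful representation, hence by \refthm{InfinitelyAnchorableZariskiNeighbourhood} it is infinitely anchorable. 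Applying \refcor{InfinitelyAnchorableExistsConnected} and \refcor{1-loopAnchorable} then produces an anchoring $L$ for which the set of $L$--essential triangulations is non-empty and connected via 2-3, 3-2, 0-2 and 2-0 moves, and for which the $1$--loop invariant evaluated at the corresponding solution of Thurston's gluing equations (which recovers the desired holonomy by \reflem{RecoverRho}) is independent of the $L$--essential triangulation used. In particular it may be computed on $\calT_{p/q}$.

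The remaining step is the term-by-term comparison. On $\calT_{p/q}$ the $1$--loop formula splits, up to an explicit monomial correction, into the contribution of the octahedral part (the $1$--loop invariant of $\calT_N$, hence the adjoint torsion of $N$) and the contributions of the layered solid tori, each computable combinatorially from the gluing-variety data along that cusp. The surgery formula gives the same split on the torsion side, with the solid-torus contributions replaced by the Jacobian factors of the core-curve holonomies relative to the natural peripheral bases. Matching the two yields the claimed equality for all sufficiently long fillings.

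The main obstacle is this last comparison: one must establish a clean surgery formula for the $1$--loop invariant and reconcile the various normalisation choices (orientations of meridians and cores, square roots, and the peripheral bases entering the torsion). A second, more structural, point is that the passage between the many triangulations of the filled manifold that arise genuinely requires 0-2 as well as 2-3 moves, and invariance of the $1$--loop invariant under 0-2 moves is only known among $L$--essential triangulations; it is precisely the $L$--essential connectivity theorem \refthm{ConnectivityWith0-2}, together with \refthm{InfinitelyAnchorableZariskiNeighbourhood}, that licenses moving $\calT_{p/q}$ to whatever triangulation makes the surgery computation transparent.
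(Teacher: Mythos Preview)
The paper does not prove \refthm{PW}; it is stated as a result of Pandey--Wong and cited to \cite[Corollary~1.16]{PandeyWong24}. The paper's contribution is to supply two ingredients that Pandey--Wong use: \refprop{HMP} (the existence of an $L$--essential triangulation adapted to Dehn filling) and \refthm{InfinitelyAnchorableZariskiNeighbourhood} (that representations near the discrete faithful one are infinitely anchorable). You have correctly identified both of these as the inputs from this paper, so in that sense your outline matches what the paper actually provides.

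That said, there is some confusion in your write-up about which manifold carries the triangulation. You describe $\calT_{p/q}$ as ``an ideal triangulation of $N$'' and then compute the $1$--loop invariant there; but the theorem concerns the $1$--loop invariant of the \emph{filled} manifold $N_{p/q}$. The point of \refprop{HMP} is that it produces both a triangulation $\calT$ of the unfilled manifold and a triangulation $\calT(S)$ of the filled manifold, together with compatible solutions $Z$ and $Z(S)$ (conclusion~\refitm{RhoRegular}); the comparison must be run on $\calT(S)$, not on $\calT$. Also, you invoke the connectivity theorem \refthm{ConnectivityWith0-2} as an essential structural input, but the paper only lists \refprop{HMP} and \refthm{InfinitelyAnchorableZariskiNeighbourhood} as the ingredients Pandey--Wong use for \refthm{PW}; the connectivity results feed into the separate well-definedness statements (Corollaries~\ref{Cor:1-loop} and~\ref{Cor:1-loopAnchorable}), not into the surgery comparison itself.

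Your acknowledged ``main obstacle'' --- the surgery formula for the $1$--loop invariant and the matching of normalisations --- is indeed the substance of the Pandey--Wong argument and lies entirely outside this paper.
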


The \emph{1-loop conjecture}~\cite[Equation~1-4]{DimofteGaroufalidis13} states that the conclusion holds for all complete non-compact hyperbolic three-manifolds.
\refthm{PW} verifies the conjecture for a large new class of manifolds.

\subsection*{Future directions}

\refthm{MatveevPiergallini} only requires 2-3 and 3-2 moves to connect triangulations together.
Our \refthm{ConnectivityWith0-2} also requires the use of 0-2 and 2-0 moves.
In the second paper in this series~\cite{KSS24b}, we will give the side-hypotheses needed to connect $L$--essential triangulations together using only 2-3 and 3-2 moves.

In future work, the second two authors will apply this connectivity result to prove the following.

\begin{theorem}
\label{Thm:Veering}
The figure-eight knot complement admits a unique veering triangulation.
\end{theorem}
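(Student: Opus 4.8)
The plan is to combine the connectivity of essential triangulations, established in \refcor{ConnectivityWith0-2}, with the rigidity of the pseudo-Anosov flow underlying a veering triangulation. Two standard inputs are needed. First, a veering triangulation admits a strict angle structure, and hence is essential (see, e.g., \cite{HodgsonRubinsteinSegermanTillmann15}); thus every veering triangulation of the figure-eight knot complement $M$ is essential. Second, $M$ carries a distinguished two-tetrahedron veering triangulation $\calT_0$: the layered triangulation of the once-punctured-torus bundle structure on $M$ with monodromy $RL$, equivalently the Epstein--Penner decomposition of $M$. So it suffices to show that every veering triangulation $\calT$ of $M$ equals $\calT_0$.

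Since $\pi_1(M)$ is infinite and the universal cover $\cover{M}$ has infinitely many boundary components, $M$ satisfies the hypotheses of \refcor{ConnectivityWith0-2} (and $M$ does admit an essential triangulation, consistently with \refcor{InfiniteIFFEssential}). Hence there is a finite sequence
\[
\calT = \calT^{(0)} \to \calT^{(1)} \to \dots \to \calT^{(k)} = \calT_0,
\]
in which each arrow is a 2-3, 3-2, 0-2, or 2-0 move and every $\calT^{(i)}$ is essential; using the refinement announced in \cite{KSS24b} one expects to be able to arrange that all moves are 2-3 or 3-2, once $M$ is checked to satisfy the side hypotheses there. The remaining, and main, task is to show that the mere existence of such a path forces $\calT = \calT_0$.

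For this I would extract from $\calT$ its associated flow. By the construction of Agol and Gu\'eritaud, promoted to a bijective correspondence by Schleimer and Segerman, a veering triangulation $\calT$ of $M$ determines a pseudo-Anosov flow $\phi_\calT$ on $M$ with no perfect fits, and running the construction backwards from $\phi_\calT$ returns $\calT$. Now $M$ fibers over the circle, and by Fried's theory of cross sections each fibered face of the Thurston norm ball of $M$ carries a single suspension flow up to isotopy. Since $M$ has, up to the symmetry exchanging a fibered face with its negative, a unique fibered face, and since the monodromy $RL$ has all of its prongs at the puncture, $M$ admits up to isotopy a unique pseudo-Anosov flow without perfect fits, whose associated veering triangulation is $\calT_0$. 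Therefore $\phi_\calT$ is isotopic to that flow, and the correspondence yields $\calT = \calT_0$.

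The hard part is the interface between the two halves of the argument: the moves of \refcor{ConnectivityWith0-2} do not preserve the veering property, so the connectivity path cannot simply be pushed to stay veering. What is really needed is a reading of $\phi_\calT$ directly from the combinatorics of $\calT$ --- via its upper and lower veering branched surfaces, the laminations they carry, and the induced cyclic order on the fixed points in $\bdy_\infty \cover{M}$ --- that is robust enough to be tracked along the path and compared at its endpoints. Equivalently, one must rule out ``large'' veering triangulations of $M$ arising from exotic pseudo-Anosov flows on Dehn fillings of $M$; the connectivity theorem of the present paper is precisely what reduces this to the analysis of a single explicit path of essential triangulations.
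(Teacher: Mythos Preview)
The paper does not prove \refthm{Veering}; it is announced as future work. The only hint given is the sentence following the statement: a veering triangulation induces a compatible circular order on $\Delta_M$ \cite[Theorem~5.1]{FSS19}, and the proof ``relies on the ability to carry these circular orderings through a sequence of essential (but in general, non-veering) triangulations.'' So there is no proof in the paper to compare against; only this outline.

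Your proposal is not a proof but a plan with a self-acknowledged gap, and its logic is tangled. Your main line of argument --- veering triangulations biject with pseudo-Anosov flows without perfect fits, and $M$ has a unique such flow --- does not use \refcor{ConnectivityWith0-2} at all. If those two ingredients are granted, uniqueness of the veering triangulation is immediate and the path of essential triangulations is irrelevant. Conversely, if those ingredients are not granted, the path does nothing for you, since (as you note) the intermediate $\calT^{(i)}$ are not veering. There is also a gap in your uniqueness-of-flow step: Fried's theory identifies cross-sections of a given flow with a fibered cone, but it does not by itself show that every pseudo-Anosov flow without perfect fits on $M$ is a suspension flow; you have only argued uniqueness among suspension flows.

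Your final paragraph is much closer to the paper's intended approach: extract from $\calT$ a circular order on $\Delta_M$ (via the veering combinatorics), transport that order along the path of essential triangulations furnished by the connectivity theorem, and compare at the endpoints. That is precisely what the paper says it will do in \cite{KSS24b} and subsequent work. But you have only named this idea, not carried it out; in particular you have not said what ``transporting the circular order through a 2-3 or 0-2 move'' means when the intermediate foam is not veering, nor why the order at the end must agree with the one coming from $\calT_0$.
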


In previous work~\cite[Theorem~5.1]{FSS19} we show that a veering triangulation induces a unique ``compatible'' circular order on $\Delta_M$.
The proof of \refthm{Veering} relies on the ability to carry these circular orderings through a sequence of essential (but in general, non-veering) triangulations. 

A more speculative application is the following.

\begin{conjecture}
\label{Conj:APoly}
Suppose that $M$ is a complete finite volume once-cusped hyperbolic three-manifold. 
Then there exists an ideal triangulation $\calT$ that ``yields'' all factors of the $A$--polynomial.
\end{conjecture}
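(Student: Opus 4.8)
The plan is to turn ``$\calT$ yields all factors of the $A$-polynomial'' into a group-theoretic condition via \reflem{RecoverRho}, and then to obtain such a $\calT$ from a ``list'' strengthening of \refthm{ExistsTriangulation}. Write the $A$-polynomial as $A_M = A_1 \cdots A_k$ with the $A_i$ irreducible. From its construction, each curve $\{A_i = 0\}$ is the image, under the peripheral holonomy $(\ell,m)$, of some irreducible component $Y_i$ of the $\PSL(2,\CC)$-character variety of $M$. I would say that $\calT$ \emph{yields} $A_i$ when the variety $S(\calT)$ of solutions to Thurston's gluing equations has a component whose $(\ell,m)$-image is $\{A_i = 0\}$. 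By \reflem{RecoverRho}, if $\calT$ is $\rho$-regular at a generic point $\rho$ of $Y_i$ then the holonomy map $S(\calT) \to X(M)$ hits a dense subset of $Y_i$, so some component of $S(\calT)$ dominates $Y_i$; since $(\ell,m)$ is algebraic and $Y_i$ is irreducible, the image of that component has Zariski closure $\{A_i = 0\}$. So it suffices to find a single $\calT$ that is $\rho$-regular at a generic point of each $Y_i$.

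Next I would make ``$\rho$-regular generically on $Y_i$'' purely combinatorial. Since $M$ is once-cusped, $\Delta_M$ is the single $\pi_1(M)$-set $\pi_1(M)/P$, with $P$ a peripheral subgroup, so every labelling of $\Delta_M$ is, up to equivalence, the quotient map to some $\pi_1(M)/H$ with $P \le H \le \pi_1(M)$; and $\calT$ is essential for it precisely when no edge of $\cover\calT$ has both endpoints in a common $H$-coset --- call such a $\calT$ \emph{$H$-avoiding}. Fix $Y_i$, let $\xi$ be a fixed point of the peripheral holonomy of the generic representation $\rho$ of $Y_i$, and let $H_i = \Stab_{\pi_1(M)}(\xi)$ for this generic $\rho$. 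Then $P \le H_i$; and, when the generic holonomy on $Y_i$ is non-elementary --- automatic on the geometric component --- $\rho(H_i)$ fixes $\xi$ and so is elementary while $\rho(\pi_1(M))$ is not, whence $H_i$ has infinite index. Taking the anchoring of $\rho$ through $\xi$ and invoking \reflem{RecoverRho}, any $H_i$-avoiding $\calT$ is $L$-essential for that anchoring, hence $\rho$-regular at a generic point of $Y_i$ (generic enough to miss, for each of the finitely many edges of $\calT$, the proper closed condition that its linking element fixes $\xi$). Thus the conjecture reduces to the following list version of \refthm{ExistsTriangulation}:

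$(\ast)$\quad\emph{Given finitely many infinite-index subgroups $H_1, \dots, H_k$ of $\pi_1(M)$, each containing a peripheral subgroup, $M$ has an ideal triangulation that is simultaneously $H_i$-avoiding for every $i$.}

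The hard part is $(\ast)$. \refthm{ExistsTriangulation} covers $k = 1$, but the obvious product labelling $\pi_1(M) \to \prod_i \pi_1(M)/H_i$ only forbids edges whose linking element lies in $\bigcap_i H_i$, which is far weaker than forbidding $\bigcup_i H_i$; and one cannot absorb the $H_i$ into a single infinite-index subgroup containing them (two distinct Borel preimages already generate $\pi_1(M)$). So $(\ast)$ requires extending the argument behind \refthm{ExistsTriangulation}. I would expect it to run on the same principle: beginning from any ideal triangulation, remove the edges lying in $H_1 \cup \dots \cup H_k$ one at a time using 2-3 and 0-2 moves, exploiting the infinitude of each $\pi_1(M)/H_i$ to keep the newly created edges clear of all of the $H_i$ simultaneously --- more bookkeeping, but the ``there is always room'' step is unchanged. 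The same extension of \refthm{ConnectivityWith0-2} would further show that the triangulations yielding all factors form a connected family under 2-3, 3-2, 0-2, and 2-0 moves. (An alternative route via \refcor{RepTriangulation} would instead need an analogue of \refthm{InfinitelyAnchorableZariskiNeighbourhood} valid for the generic representation on every $Y_i$, not just near the discrete faithful one; this looks no easier.)

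Two caveats should be recorded. First, abelian representations are never $\rho$-regular, since every group element fixes the common fixed point and so every edge lies in the corresponding subgroup; hence no triangulation yields the abelian factor of $A_M$, and the statement should be read with that factor removed, and more conservatively restricted to factors arising from components with non-elementary generic holonomy --- one should check whether curve components with elementary (e.g.\ infinite-dihedral) generic holonomy, for which $H_i$ could have finite index, actually contribute $A$-polynomial factors when $M$ is hyperbolic. Second, the ``yields'' step uses only that $(\ell,m)$ is algebraic and $Y_i$ irreducible, but deserves a line to confirm that the dominating component of $S(\calT)$ surjects onto all of $\{A_i = 0\}$. Granting $(\ast)$, these are routine, so the real content of the conjecture is the list version of \refthm{ExistsTriangulation}.
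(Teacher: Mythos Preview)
This statement is a \emph{conjecture} in the paper, not a theorem; the paper offers no proof.  Its entire discussion is a single speculative sentence: ``It seems likely that for a given manifold (perhaps with extra hypotheses), a labelling could be constructed that when used in \refthm{ExistsTriangulation} answers \refconj{APoly}.''  Your proposal is likewise not a proof but a strategy, and you say so yourself by isolating $(\ast)$ as the unproved core.  So the honest comparison is between two sketches, and yours is considerably more developed.

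In fact you go beyond the paper's suggestion and implicitly identify a problem with it.  The paper hints at a \emph{single} labelling $L$ such that $L$--essentiality forces $\rho$--regularity on every component $Y_i$.  You correctly observe that the obvious product labelling $\pi_1(M)/P \to \prod_i \pi_1(M)/H_i$ detects only edges lying in $\bigcap_i H_i$, whereas you need to exclude edges in $\bigcup_i H_i$; and since the equivalence relation generated by ``same $H_i$--coset for some $i$'' is typically all of $\Delta_M$ once the $H_i$ together generate $\pi_1(M)$, no single labelling in the paper's sense can encode the simultaneous constraint.  Your list statement $(\ast)$ is the right reformulation.

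Where your sketch is too optimistic is the claim that $(\ast)$ is ``more bookkeeping, but the `there is always room' step is unchanged.''  The label--choosing step does generalise: by B.~H.~Neumann's lemma a group is never the union of finitely many cosets of infinite--index subgroups, so one can always find a source region $C$ whose $L_i$--label avoids the finitely many bad values near $D$, for all $i$ at once.  The difficulty is the \emph{truncation} step of \refalg{CreateSnakePath}.  In the single--labelling case one truncates the lifted path at the last region carrying the label $\ell$, and that region becomes the new source.  With several labellings the last region carrying label $\ell_i$ for some $i$ need not carry $\ell_j$ for $j\neq i$, so after truncation the source no longer has the required label in every $L_j$; yet without truncation the interior of the snake may meet a region sharing some $\ell_i$, producing an $L_i$--inessential face upon inflation.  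Resolving this tension is a genuine new step, not bookkeeping --- one presumably has to choose the path and the source together rather than sequentially, or iterate the construction.  This is exactly the ``extra hypotheses'' loophole the paper leaves open.

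Your caveats are well taken: abelian (and more generally reducible) components can never be $\rho$--regular for any triangulation, so the conjecture must be read modulo those factors; and the finite--index worry for elementary non--abelian components deserves a separate argument.  Both are consistent with the paper's hedging.
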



Marc Culler~\cite{Culler05, KnotInfo24} gives a list of $A$--polynomials of knot complements, computed using the shape variety for some ideal triangulation $\calT$.
If $\calT$ is not $L$--essential, for any anchoring of any representation $\rho$ in some component of the character variety, then Culler's algorithm will miss the corresponding factor of the $A$--polynomial.
Experimentally, Culler found that in at least one example it is possible to retriangulate to move from a triangulation that does not see all factors to one that does.
See~\cite[Remark~11.5]{Segerman12}.
It seems likely that for a given manifold (perhaps with extra hypotheses), a labelling could be constructed that when used in \refthm{ExistsTriangulation} answers \refconj{APoly}.

\subsection*{Outline}

We give the necessary background in \refsec{Definitions}.
We prove \refthm{ExistsTriangulation} in \refsec{BuildLEssential}.
The idea is as follows.
We start with any triangulation.
We then take the first barycentric subdivision.
This breaks apart any $L$--inessential edges at the cost of introducing many material vertices.
We then remove each material vertex by ``identifying'' it with an ideal vertex. 
The tool we use to do this is a \emph{snake}: see \refsec{SnakePaths}.
These must be carefully chosen (using \refalg{CreateSnakePath}) to avoid creating any new $L$--inessential edges.

The strategy to prove \refthm{ConnectivityWith0-2} is to repeatedly improve a path of triangulations connecting two given triangulations. 
In \refsec{SimplicialConnectivity} we apply a result of Casali~\cite{Casali95} to construct a path of $L$--essential triangulations using 2-3 moves, bubble moves, and their inverses (\refcor{ConnectivitySimplicial}).
Bubble moves increase the number of material vertices in the triangulation. 
In Sections~\ref{Sec:SnakeHandling} and~\ref{Sec:TraversingWithSnakes} we fix this, replacing the use of bubble moves with 0-2 moves.
This is a difficult procedure.
A bubble move introduces a new material vertex, but 2-3 and 0-2 moves do not.
So we must closely shadow the given sequence of triangulations in which the number of vertices changes.
We do this while not actually adding any material vertices or creating any $L$--inessential edges. 

The theorems are stated in terms of triangulations. 
However, the proofs mostly deal with the dual objects: special spines, following Matveev.
In this paper we use the more descriptive term \emph{foam}.
Foams and triangulations record the same combinatorial information.
However, pictures of foams are often easier to understand than the corresponding pictures for triangulations.

\subsection*{Acknowledgements}

We thank Moishe Kohan and Andy Putman for their helpful comments on the proof of \reflem{ManyRegions}.
We thank Ka Ho Wong for many enlightening conversations regarding the 1-loop invariant.
We thank Marc Culler and Peter Shalen for their helpful comments on character varieties.
We thank Abhijit Champanerkar and Matthias Goerner for useful conversations about the Bloch invariant.
The third author was supported in part by National Science Foundation grant DMS-2203993.

\section{Definitions and preliminaries}
\label{Sec:Definitions}

\subsection{Triangulations and foams}
Let 
\[
\Tet = \left\{x \in \RR^4  \thinspace \middle| \thinspace \sum x_i = 1, x_i \geq 0\right\}
\]
be the standard tetrahedron.
The vertices, edges, and faces of $\Tet$ are called \emph{model cells}.
A \emph{triangulation} $\calT$ is a collection of copies of $\Tet$, called the \emph{model tetrahedra}, as well as a collection of \emph{face pairings}.
These are affine isomorphisms of pairs of model faces.
We require that every model face appear in exactly two face pairings (once as a domain and once as a codomain).
The \emph{realisation} $|\calT|$ is the topological space obtained by taking the disjoint union of the model tetrahedra and quotienting by the face pairings.
We define the \emph{vertices} of $|\calT|$ to be the images of the model vertices.
We take a subset $\calT_{\ideal}^{(0)}$ of these to be the \emph{ideal vertices} of $\calT$.
The remaining vertices are the \emph{material vertices}.

Suppose that $M$ is a compact, connected three-manifold, possibly with boundary.
We say that $\calT$ is a \emph{partially ideal triangulation} of $M$ if 
\[
|\calT| - \calT_{\ideal}^{(0)} \homeo M - \bdy M
\]
We say that $\calT$ is an \emph{ideal triangulation} of $M$ if it has no material vertices.
We say that $\calT$ is a \emph{material triangulation} of $M$ if it has no ideal vertices.
In the latter case $\bdy M$ must be empty.

\begin{definition}
\label{Def:Foam}
Suppose that $M$ is a compact, connected three-manifold, possibly with boundary.
Suppose that $\calF$ is a finite two-dimensional piecewise-linear CW complex embedded in $M$ with the following properties.
\begin{enumerate}
\item The attaching maps are locally injective. 
\item Every zero-cell is adjacent to four ends of one-cells.
\item The intersection of $\calF$ with a small regular neighbourhood of a zero-cell is isomorphic to the cone over the one-skeleton of a tetrahedron.
\item Every one-cell is adjacent to three segments of boundaries of two-cells.
\item 
\label{Itm:Complement}
Every complementary component of $M - \calF$ is either an open three-ball or is homeomorphic to a product of a component of $\bdy M$ with $[0,1)$.
We call the former \emph{material regions} and the latter \emph{peripheral regions}.
\end{enumerate}
(See Figures~\ref{Fig:PointTypesOnFoam1},~\ref{Fig:PointTypesOnFoam2}, and~\ref{Fig:PointTypesOnFoam3} for small neighbourhoods of points of $\calF$ in $M$.)
We call $\calF$ a \emph{foam} in $M$.
\end{definition}

Example foams in $S^3$ and in $S^1 \times S^2$ are illustrated in \reffig{ExampleFoams}.
This terminology appears in \cite[page~1047]{Khovanov04} and \cite[Section~5]{Sullivan99}.
Matveev~\cite[Section~1.1.4]{Matveev07} calls $\calF$ a \emph{special spine} for $M$.

\begin{figure}[htbp]
\subfloat[]{
\includegraphics[height = 2.4cm]{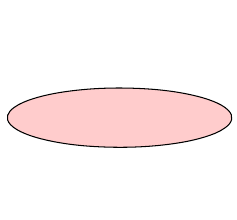}
\label{Fig:PointTypesOnFoam1}
}
\quad
\subfloat[]{
\includegraphics[height = 2.4cm]{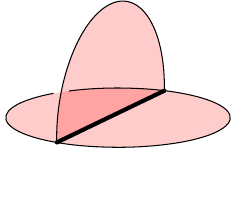}
\label{Fig:PointTypesOnFoam2}
}
\quad
\subfloat[]{
\includegraphics[height = 2.4cm]{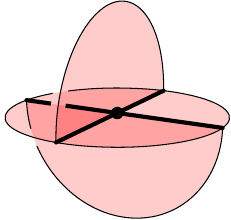}
\label{Fig:PointTypesOnFoam3}
}
\quad
\subfloat[]{
\includegraphics[height = 2.4cm]{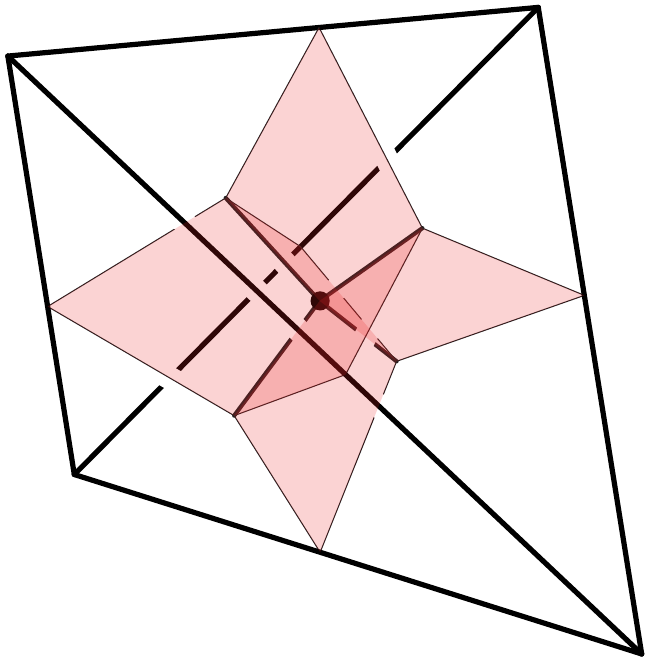}
\label{Fig:Butterfly}
}
\caption{Local pictures of foams.}
\label{Fig:Foam}
\end{figure}

\begin{remark}
\label{Rem:FoamConnected}
Note that, since $M$ is connected and $M$ (minus a point from each material region) deformation retracts to $\calF$, it follows that $\calF$ is connected.
\end{remark}

\begin{figure}[htbp]
\subfloat[A foam in $S^3$.]{
\includegraphics[height = 4.0cm]{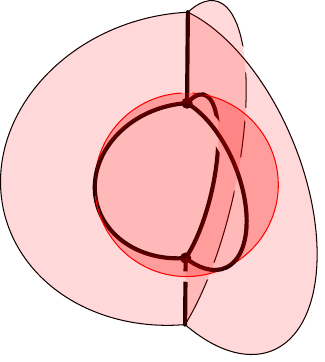}
\label{Fig:S3Example}
}
\quad
\subfloat[A foam in $S^1 \times D^2$.]{
\includegraphics[height = 4.0cm]{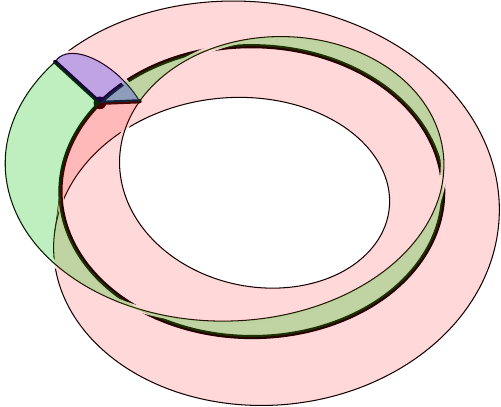}
\label{Fig:S2xS1Example}
}
\caption{Example foams. We obtain a foam in $S^1 \times S^2$ by doubling the foam in \reffig{S2xS1Example} across the boundary of the solid torus.}
\label{Fig:ExampleFoams}
\end{figure}

As discussed in~\cite[Section~2.7]{RubinsteinSegermanTillmann19}, we have the following duality (also see \reffig{Butterfly}).

\begin{lemma}
Suppose that the foam $\calF$ is dual to the triangulation $\calT$.
Then material regions of $\calF$ are dual to material vertices of $\calT$, while peripheral regions of $\calF$ are dual to ideal vertices of $\calT$. \qed
\end{lemma}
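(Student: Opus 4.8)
The plan is to unwind the duality between $\calF$ and $\calT$ and then read off the homeomorphism type of each complementary region of $M \setminus \calF$. Recall (see \cite[Section~2.7]{RubinsteinSegermanTillmann19}; compare the butterfly in \reffig{Butterfly}) that we may take $\calF$ to be the dual two-skeleton sitting inside the first barycentric subdivision $\calT'$, so that the zero-cells of $\calF$ are the barycentres of the model tetrahedra, the one-cells are dual to the faces, and the two-cells are dual to the edges. Part of this duality is a bijection $v \leftrightarrow R_v$ between the vertices $v$ of $|\calT|$ and the complementary regions $R_v$ of $M \setminus \calF$; concretely, inside $M - \bdy M \homeo |\calT| - \calT_{\ideal}^{(0)}$ the region $R_v$ is an open regular neighbourhood of $v$, with the point $v$ itself removed when $v$ is ideal (as then $v$ is not a point of $M$) and with the abutting boundary collar of $M$ adjoined. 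So it suffices to identify $R_v$ up to homeomorphism for each vertex $v$.

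Next, fix a vertex $v$ of $|\calT|$ and set $\Sigma_v = \link(v, \calT)$; since $\link(v, \calT')$ is a subdivision of $\Sigma_v$, the closed star of $v$ in $\calT'$ is PL-homeomorphic to the cone on $\Sigma_v$. If $v$ is material, then $v$ lies in $|\calT| - \calT_{\ideal}^{(0)} \homeo M - \bdy M$, which is a three-manifold without boundary near $v$, so $\Sigma_v$ is a two-sphere and $R_v$, the interior of that cone, is an open three-ball, i.e.\ a material region of $\calF$. If $v$ is ideal, then by the defining property of a partially ideal triangulation the surface $\Sigma_v$ is the component of $\bdy M$ lying at the corresponding end of $M - \bdy M$; then $R_v$ is the half-open collar of that boundary component cut off from $M$ by $\calF$, hence homeomorphic to $\Sigma_v \times [0,1)$ with $\Sigma_v \times \{0\}$ that boundary component, i.e.\ a peripheral region of $\calF$.

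To finish, I would recall from \refdef{Foam}\refitm{Complement} that every complementary region of $M \setminus \calF$ is either an open three-ball (a material region) or is homeomorphic to the product of a component of $\bdy M$ with $[0,1)$ (a peripheral region), and that these two possibilities are mutually exclusive --- for instance the closure in $M$ of a material region is a compact ball disjoint from $\bdy M$, whereas every peripheral region meets $\bdy M$. Combining this with the region--vertex bijection and the previous paragraph, a complementary region $R_v$ is a material region exactly when $v$ is material and a peripheral region exactly when $v$ is ideal, which is the assertion of the lemma. The one thing that needs care is the bookkeeping distinction between $|\calT|$, which records the ideal vertices, and $M$, which instead carries the boundary collars at those vertices; once that is kept straight, the argument is a routine regular-neighbourhood computation, and I anticipate no real obstacle.
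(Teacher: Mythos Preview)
Your argument is correct and is exactly the unpacking the paper has in mind: the lemma is stated with an immediate \qed, deferring to the duality in \cite[Section~2.7]{RubinsteinSegermanTillmann19}, and your regular-neighbourhood computation is precisely that duality made explicit.

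One small imprecision worth tightening: you write that ``the closure in $M$ of a material region is a compact ball disjoint from $\bdy M$''. In a non-simplicial triangulation the closed star of a material vertex need not embed as a ball in $|\calT|$ (for instance if there is an edge loop at $v$), so the closure of $R_v$ in $M$ can have boundary identifications. This does not affect your argument, since the feature you actually use --- that material regions miss $\bdy M$ while peripheral regions meet it --- remains true; alternatively, simply observe that an open three-ball is a manifold without boundary whereas $\Sigma \times [0,1)$ is not, which already separates the two cases.
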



We deduce the following.
A foam for which all complementary regions are material is dual to a material triangulation.
A foam for which all complementary regions are peripheral is dual to an ideal triangulation.
A foam with both kinds of complementary region is dual to a partially ideal triangulation.

\begin{definition}
\label{Def:Simplicial}
Suppose that $\calT$ is an ideal or partially ideal triangulation. 
Suppose that every $k$--simplex of $\calT$ has $k+1$ distinct vertices.
Suppose that any set of $k+1$ vertices of $\calT$ are the vertices of at most one $k+1$--simplex of $\calT$.
Then we say that $\calT$ is \emph{simplicial}.
Suppose further that no edge of $\calT$ has both endpoints being ideal vertices.
Then we say that $\calT$ is \emph{insulated simplicial}.
\end{definition}

\subsection{Universal cover}

Suppose that $M$ is a compact, connected three-manifold, possibly with boundary.
We use the notation $\phi = \phi_M \from \cover{M} \to M$ for the universal covering of $M$.
For every boundary component $C$ of $M$, we call a component of $\phi^{-1}(C)$ an \emph{elevation} of $C$.
Note that an elevation is a lift precisely when $C$ is a sphere.

Suppose that $\calT$ is a partially ideal triangulation of $M$.
We use $\cover{\calT}$ to denote the induced partially ideal triangulation of $\cover{M}$.
For every image of a model cell $c$ in $M$, there is an induced countable collection of images of model cells in $\cover{M}$.
We call these the \emph{lifts} of $c$ to $\cover{M}$.

We use $\Delta_M$ to denote the set of components of $\bdy \cover{M}$.
Note that there is a natural bijection between $\Delta_M$ and $\cover{\calT}_{\ideal}^{(0)}$.

\subsection{Labellings and $L$--essentiality}
\label{Sec:Label}

\begin{definition}
\label{Def:Labelling}
Suppose that $\calL$ is a set, equipped with an action of $\pi_1(M)$.
We say that $\calL$ is a set of \emph{labels}.
Suppose that $L \from \Delta_M \to \calL$ is a $\pi_1(M)$--equivariant 
function.
Then we call $L$ a \emph{labelling} of $\Delta_M$.
\end{definition}

\begin{example}
\label{Exa:Essential}
We take $\calL = \Delta_M$ and we take $L$ to be the identity map.
\end{example}

\begin{example}
\label{Exa:CovEssential}
Suppose that $\psi \from M' \to M$ is a regular covering map.
Let $\phi' \from \cover{M} \to M'$ be the induced universal covering. 
Let $\calL$ be the set of components of $\bdy M'$.
Since $\psi$ is regular, the fundamental group $\pi_1(M)$ acts on $M'$, by homeomorphisms, and surjecting the deck group.
This gives the action of $\pi_1(M)$ on $\calL$.
The labelling $L$ is given by $L(c) = \phi'(c)$. 
\end{example}

\begin{definition}
\label{Def:Stab}
Suppose that $\pi_1(M)$ acts on a set $X$.
Suppose that $c \in X$.
We use the following standard notation for the \emph{stabiliser} of $c$.
\[
\Stab(c) = \{ \alpha \in \pi_1(M) \st \alpha \cdot c = c \} \qedhere
\]
\end{definition}

\begin{definition}
\label{Def:Anchorable}
Suppose that $M$ has non-empty boundary.
Suppose that $\rho \from \pi_1(M) \to \PSL(2,\CC)$ is a representation.
Let $\Fix_\rho(c) \subset \bdy_\infty \HH^3$ be the fixed points at infinity for $\rho(\Stab(c))$.
We say that $\rho$ is \emph{anchorable at} $c$ if $\Fix_\rho(c)$ is non-empty.
We say that $\rho$ is \emph{anchorable} if it is anchorable at $c$ for all $c$ in $\Delta_M$.
\end{definition}

Note that $\Fix_\rho(c)$ may be empty even when $c$ is an elevation of a torus boundary component:
see \refexa{Mattress}.

\begin{definition}
\label{Def:Anchoring}
Suppose that $\rho \from \pi_1(M) \to \PSL(2,\CC)$ is a representation.
A labelling $L$ is an \emph{anchoring} of $\rho$ if $\calL = \bdy_\infty\HH^3$ and we have $L(\gamma(c)) = \rho(\gamma)(L(c))$ for all $\gamma \in \pi_1(M)$.
\end{definition}

\begin{lemma}
\label{Lem:Anchoring}
A representation $\rho \from \pi_1(M) \to \PSL(2,\CC)$ is anchorable if and only if there exists an anchoring of $\rho$.
\end{lemma}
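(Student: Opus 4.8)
The plan is simply to unwind both definitions: an anchoring is by definition a $\pi_1(M)$--equivariant map $\Delta_M \to \bdy_\infty\HH^3$, where $\pi_1(M)$ acts on the target through $\rho$, and anchorability is the pointwise non-emptiness of the fixed-point sets $\Fix_\rho(c)$. So one direction extracts fixed points from a given equivariant map, and the other builds an equivariant map out of chosen fixed points.

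For the ``if'' direction, suppose $L$ is an anchoring of $\rho$. Fix $c \in \Delta_M$ and let $\alpha \in \Stab(c)$. Equivariance of $L$ (see \refdef{Anchoring}) gives
\[
L(c) = L(\alpha(c)) = \rho(\alpha)\bigl(L(c)\bigr),
\]
so $L(c)$ is fixed by $\rho(\alpha)$. Letting $\alpha$ range over $\Stab(c)$ shows $L(c) \in \Fix_\rho(c)$; in particular $\Fix_\rho(c) \neq \emptyset$. As $c$ was arbitrary, $\rho$ is anchorable.

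For the ``only if'' direction, suppose $\rho$ is anchorable. Since $M$ is compact, $\bdy M$ has finitely many components, and by covering space theory $\pi_1(M)$ acts on $\Delta_M$ with one orbit per component of $\bdy M$; I would choose orbit representatives $c_1, \dots, c_n$ and, using anchorability, a point $x_i \in \Fix_\rho(c_i)$ for each $i$. Then define $L \from \Delta_M \to \bdy_\infty\HH^3$ by $L(\gamma(c_i)) = \rho(\gamma)(x_i)$. The one point to verify is that this is well-defined: if $\gamma(c_i) = \gamma'(c_i)$ then $\gamma^{-1}\gamma' \in \Stab(c_i)$, so $\rho(\gamma^{-1}\gamma')$ fixes $x_i$ and hence $\rho(\gamma')(x_i) = \rho(\gamma)(x_i)$. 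Equivariance, $L(\delta\gamma(c_i)) = \rho(\delta)(L(\gamma(c_i)))$, is then immediate, and $\calL = \bdy_\infty\HH^3$ by construction, so $L$ is an anchoring of $\rho$.

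There is no real obstacle here. The only step needing a moment's care is the well-definedness of $L$ in the second direction, which is precisely where the hypothesis $x_i \in \Fix_\rho(c_i)$ enters; everything else is formal bookkeeping with the $\pi_1(M)$--action. (Since $\bdy M$ has only finitely many components, the choices of $c_i$ and $x_i$ are finite in number, so no appeal to the axiom of choice is needed.)
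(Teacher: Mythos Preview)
Your proof is correct and takes essentially the same approach as the paper, which simply says ``equivariantly pick, for each $c$ in $\Delta_M$, a point $z_c \in \Fix_\rho(c)$'' for one direction and ``equivariance implies that $\Fix_\rho(c)$ is non-empty'' for the other. You have merely spelled out in full the orbit-representative construction and well-definedness check that the paper leaves implicit.
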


\begin{proof}
In the forward direction we equivariantly pick, for each $c$ in $\Delta_M$, a point $z_c \in \Fix_\rho(c)$.
Set $L(c) = z_c$.
In the backwards direction, equivariance implies that $\Fix_\rho(c)$ is non-empty.
\end{proof}


A construction very similar to the forward direction of \reflem{Anchoring} appears in \cite[Section~2.5]{Dunfield99}.
Also see the last sentence of \cite[page~216]{Thurston86}.

\subsubsection{Label-essential triangulations}

\begin{definition}
\label{Def:LEssential}
Suppose that $\calT$ is an ideal triangulation of $M$.
Suppose that $L$ is a labelling of $\Delta_M$, as in \refdef{Labelling}.
Suppose that $e$ is an edge of $\calT$ with a lift $\cover{e}$ in $\cover{\calT}$.
Suppose that $\cover{u}$ and $\cover{v}$ are the endpoints of $\cover{e}$.
If $L(\cover{u}) = L(\cover{v})$ then we say that $e$ is \emph{$L$--inessential}.
Otherwise we say that $e$ is \emph{$L$--essential}.
If all edges of $\calT$ are $L$--essential then we say that $\calT$ is \emph{$L$--essential}.
\end{definition}

\begin{remark}
\label{Rem:IdealEssential}
If $L$ is the identity labelling as in \refexa{Essential} then an ideal triangulation is $L$--essential if and only if it is \emph{essential} as in Definition~3.5 of \cite{HodgsonRubinsteinSegermanTillmann15}. 
\end{remark}

\begin{remark}
\label{Rem:MaterialEssential}
Suppose that $M$ is a closed manifold with a material triangulation $\calT$.
Let $\calN$ be a small regular neighbourhood of the vertices.
Let $M^\circ = M - \calN$.
Let $\calT^\circ$ be the resulting ideal triangulation of $M^\circ$.
Suppose that $L$ is the identity labelling of $\Delta_{M^\circ}$ as in \refexa{Essential}.
Then $\calT^\circ$ is $L$--essential if and only if $\calT$ is \emph{essential} in the sense of \cite[Definition~3.2]{HodgsonRubinsteinSegermanTillmann15} (in the one-vertex case) or in the sense of \cite[page~336]{LuoTillmannYang13} (more generally).
\end{remark}

\begin{remark}
For an $L$--essential triangulation to exist, the image of the labelling $L$ must have at least four elements.
As a corollary of this, the universal cover $\cover{M}$ must have at least four boundary components.  
We deduce that a handlebody, and similarly, a 
surface-cross-interval, cannot have an $L$--essential triangulation.
This is because their universal covers only have one and two boundary components, respectively.
\end{remark}

\begin{lemma}
\label{Lem:ManyRegions}
Suppose that $M$ is a compact, connected three-manifold with boundary.
Suppose that $\pi_1(M)$ is infinite. 
Suppose that $|\Delta_M| \geq 3$.
Then $\Delta_M$ is infinite.
\end{lemma}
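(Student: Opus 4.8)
The plan is to suppose, for contradiction, that $\Delta_M$ is finite, and then to show that $|\Delta_M| \le 2$. The first step is to pass to a convenient finite cover. Since $\Delta_M$ is finite, $\pi_1(M)$ acts on it through a finite quotient, so the kernel of this action has finite index; intersecting with the kernel of the orientation homomorphism $\pi_1(M) \to \ZZ/2$ gives a finite-index, hence infinite, subgroup $\pi' \le \pi_1(M)$. Let $M' \to M$ be the associated cover. Then $M'$ is compact, connected, orientable, has non-empty boundary, and $\pi_1(M') \cong \pi'$ is infinite. Since $\cover{M'} = \cover{M}$, we have $\Delta_{M'} = \Delta_M$, and because $\pi'$ acts trivially on $\Delta_M$ the boundary components of $M'$ correspond bijectively to the elements of $\Delta_M$; in particular $M'$ has exactly $|\Delta_M|$ boundary components. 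Finally, for each boundary component $C$ of $M'$, the set of elevations of $C$ is a transitive $\pi'$-set isomorphic to $\pi'/\Image(\pi_1(C) \to \pi_1(M'))$, and triviality of the $\pi'$-action forces $\pi_1(C) \to \pi_1(M')$ to be onto.

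Now I would run a homology count on $M'$. Put $b = \dim_\QQ H_1(M';\QQ)$ and let $C_1, \dots, C_k$ be the boundary components of $M'$, so that $k = |\Delta_M|$. Each $C_i$ is a closed orientable surface whose fundamental group surjects onto the infinite group $\pi_1(M')$; hence $C_i$ is not a sphere, so it has genus $g_i \ge 1$ and $\dim_\QQ H_1(C_i;\QQ) = 2g_i \ge 2$, and the induced surjection $H_1(C_i;\QQ) \twoheadrightarrow H_1(M';\QQ)$ gives $2g_i \ge b$. By ``half lives, half dies'' (Poincar\'e--Lefschetz duality for the compact orientable $3$-manifold $M'$), the image of $H_1(\partial M';\QQ) \to H_1(M';\QQ)$ has dimension $\tfrac12 \dim_\QQ H_1(\partial M';\QQ)$; but this image is already all of $H_1(M';\QQ)$, so $b = \tfrac12 \sum_{i=1}^k 2g_i = \sum_{i=1}^k g_i$. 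Combining the inequalities, $2b = 2\sum_i g_i \ge kb$, and since $b = \sum_i g_i \ge k \ge 1$ is positive we conclude $k \le 2$. This contradicts the hypothesis $|\Delta_M| \ge 3$, so $\Delta_M$ must be infinite.

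I expect the main obstacle to be the covering-space bookkeeping in the first paragraph: checking carefully that replacing $M$ by $M'$ converts the hypothesis ``$\Delta_M$ is finite'' into the workable statement ``the fundamental group of every boundary component of $M'$ surjects onto $\pi_1(M')$'' without altering the count of boundary components, and that folding the orientation cover into the same step legitimately reduces us to the orientable setting needed for Poincar\'e--Lefschetz duality. Once this reduction is in place, the homological estimate is routine.
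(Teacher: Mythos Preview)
Your proof is correct and takes a genuinely different route from the paper's.

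The paper argues geometrically: after passing to the orientable double cover, it fixes a boundary component $C$ and splits into cases according to whether the image of $\pi_1(C)$ in $\pi_1(M)$ has infinite or finite index. The finite-index case is handled by heavy three-manifold machinery---the disk theorem, an induction showing $M$ is a compression body, and Perelman's resolution of the Poincar\'e conjecture to rule out fake products---eventually producing an internal boundary component of infinite index.

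Your argument is purely homological and considerably more elementary. The key differences: you pass to a finite cover $M'$ where \emph{every} boundary component has $\pi_1$ surjecting onto $\pi_1(M')$ (rather than just one), and then you exploit ``half lives, half dies'' to get the numerical contradiction $2b \ge kb$ with $b>0$. This avoids the Poincar\'e conjecture, the disk theorem, and compression-body theory entirely; all that is used is Poincar\'e--Lefschetz duality for a compact orientable three-manifold.

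What each approach buys: the paper's proof actually identifies the structure of $M$ in the borderline cases (handlebody or product), which is informative even if not needed for the lemma. Your proof is shorter, uses far lighter tools, and would generalise more readily (for instance, the same counting works in any odd dimension where the analogue of half-lives-half-dies holds).
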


\begin{proof}
Passing to a double cover, if necessary, we may assume that $M$ is oriented.
Fix now some $c \in \Delta_M$.
Let $C = \phi(c)$.
So $C$ is a compact, connected, oriented surface.
Fix a basepoint $p \in C$.
Let $\Gamma_C$ be the image of $\pi_1(C, p)$ in $\pi_1(M, p)$.

Suppose first that $\Gamma_C$ has infinite index in $\pi_1(M, p)$.
Then the full preimage of $C$ in $\cover{M}$ has infinitely many components and we are done.
Suppose instead that $\Gamma_C$ has finite index in $\pi_1(M, p)$.
In this case we replace $M$ with the finite cover with fundamental group $\Gamma_C$.
So we may assume that $\pi_1(C, p)$ surjects $\pi_1(M, p)$.
If the homomorphism induced by inclusion is an isomorphism then 
by \cite[Theorem~10.2]{Hempel04} (and 
by Perelman's resolution to the Poincar\'e conjecture), we deduce that $M$ is homeomorphic to a surface-cross-interval $C \times [0,1]$.
If instead the homomorphism has kernel then the disk theorem \cite[page~474]{Gordon99} realises $M$ as a boundary connect sum; the decomposing disk has boundary in $C$.
Induction on the (negative of the) Euler characteristic of $M$
proves that $M$ is a compression body (possibly with two-spheres in its internal boundary), with exterior boundary equal to $C$.
If $M$ is a handlebody or a product then the universal cover $\cover{M}$ has at most two boundary components, contrary to hypothesis.
We deduce that the compression body $M$ has an internal boundary component $C'$.
So $C'$ is incompressible in $M$ (or is a sphere) and has genus less than that of $C$.
We deduce that $\pi_1(C')$ has infinite index in $\pi_1(M)$.
So the full preimage of $C'$ in $\cover{M}$ has infinitely many components.
Again we are done.
\end{proof}

\begin{corollary}
\label{Cor:ManyRegions}
Suppose that $M$ is a compact and connected three-manifold with boundary.
Suppose that $\pi_1(M)$ is infinite. 
Suppose that $\calT$ is an $L$--essential ideal triangulation of $M$.
Then $\Delta_M$ is infinite.
\end{corollary}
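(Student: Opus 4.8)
The plan is to deduce this directly from \reflem{ManyRegions}; the only thing that needs checking is that the hypothesis $|\Delta_M| \geq 3$ is automatic once $M$ carries an $L$--essential ideal triangulation. To see this, recall that there is a natural bijection between $\Delta_M$ and $\cover{\calT}_{\ideal}^{(0)}$. I would pick any model tetrahedron of $\calT$ and fix one of its lifts in $\cover{\calT}$. The four ideal vertices $\cover{v}_0, \cover{v}_1, \cover{v}_2, \cover{v}_3$ of this lifted tetrahedron are joined pairwise by its six edges, and each of those edges is a lift of an edge of $\calT$. Since $\calT$ is $L$--essential, \refdef{LEssential} gives $L(\cover{v}_i) \neq L(\cover{v}_j)$ whenever $i \neq j$. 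In particular the $\cover{v}_i$ are four distinct elements of $\cover{\calT}_{\ideal}^{(0)}$, so $|\Delta_M| \geq 4 \geq 3$. (This is exactly the observation recorded in the remark preceding \reflem{ManyRegions}.)

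With this in hand, $M$ is a compact, connected three-manifold with boundary, $\pi_1(M)$ is infinite, and $|\Delta_M| \geq 3$, so \reflem{ManyRegions} applies without change and yields that $\Delta_M$ is infinite, as required.

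The substantive content of the statement is entirely contained in \reflem{ManyRegions}, whose proof is the genuinely delicate step (the compression-body analysis via Hempel's theorem, the disk theorem, and Perelman); this corollary is just the packaging step that trades the combinatorial hypothesis $|\Delta_M| \geq 3$ for the more natural hypothesis that $M$ admits an $L$--essential ideal triangulation. The one point worth spelling out explicitly in the write-up is that $L$--essentiality of a single lifted tetrahedron already forces its four ideal vertices to lie in four distinct components of $\bdy\cover{M}$; everything else is immediate.
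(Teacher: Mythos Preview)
Your proof is correct and matches the paper's own argument essentially verbatim: pick a lifted tetrahedron, use $L$--essentiality of its six edges to get four distinct labels (hence four distinct elements of $\Delta_M$), and invoke \reflem{ManyRegions}. The only difference is length---the paper compresses the same reasoning into three short sentences.
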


\begin{proof}
Any ideal tetrahedron in $\cover{\calT}$ has all of its edges $L$--essential.
Thus its vertices have four distinct labels.
Thus it meets four distinct elements of $\Delta_M$. 
We can therefore apply \reflem{ManyRegions}.
\end{proof}

\subsection{Hyperbolic geometry}

Our generalisation from essential to $L$--essential triangulations has applications in hyperbolic geometry.
Given a triangulation $\calT$ we can try to solve \emph{Thurston's gluing equations}.
These give each tetrahedron $t$ of $\calT$ the shape of an ideal hyperbolic tetrahedron.
We record this shape as the cross-ratio of its four points in $\CP^1$.
Thurston's gluing equations for $\calT$ ensure that the hyperbolic tetrahedra fit together properly about each edge. 
For details, see~\cites[Chapter~4]{Thurston80}[Section~4.2]{Purcell20}.

Suppose that $Z$ is such a solution to the gluing equations.
By~\cite[Corollaries~2 and~10]{SegermanTillmann11}, associated to $Z$ we have a \emph{pseudo-developing map} $D_Z \from \cover{M} \to \HH^3$, a representation $\rho_Z \from \pi_1(M) \to \PSL(2, \CC)$, and a \emph{pseudo-developing map at infinity} $\bdy D_Z \from \Delta_M \to \bdy_\infty \HH^3$.

\begin{lemma}
\label{Lem:RecoverRho}
Suppose that $\rho \from \pi_1(M) \to \PSL(2,\CC)$ is a representation.
Suppose that $\calT$ is an ideal triangulation of $M$.
Then the following are equivalent:
\begin{itemize}
\item There is an anchoring $L$ of $\rho$ such that $\calT$ is $L$--essential.
\item There is a solution $Z$ of Thurston's gluing equations on $\calT$ so that $\rho_Z$ is conjugate to $\rho$. 
\end{itemize}
\end{lemma}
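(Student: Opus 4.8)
The plan is to route both implications through the pseudo-developing map at infinity, using two facts: a solution to Thurston's gluing equations equips each model tetrahedron with a shape in $\CC \setminus \{0,1\}$, that is, a nondegenerate ideal hyperbolic tetrahedron; and four distinct points of $\bdy_\infty \HH^3$ have cross-ratio in $\CC\setminus\{0,1\}$, so span such a tetrahedron.

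For the direction that produces an anchoring from a solution, I would start with $Z$ whose holonomy $\rho_Z$ is conjugate to $\rho$, say $\rho(\gamma) = h\,\rho_Z(\gamma)\,h^{-1}$ with $h \in \PSL(2,\CC)$. By \cite[Corollaries~2 and~10]{SegermanTillmann11} there is an equivariant map $\bdy D_Z \from \Delta_M \to \bdy_\infty\HH^3$, satisfying $\bdy D_Z(\gamma \cdot c) = \rho_Z(\gamma)(\bdy D_Z(c))$, which is compatible with $D_Z$: for each ideal tetrahedron $\cover{t}$ of $\cover{\calT}$, the four values of $\bdy D_Z$ on the vertices of $\cover{t}$ are the four ideal vertices of the hyperbolic tetrahedron $D_Z(\cover{t})$. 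Set $L = h \circ \bdy D_Z$. Then $\calL = \bdy_\infty\HH^3$ and $L(\gamma \cdot c) = h\rho_Z(\gamma)(\bdy D_Z(c)) = \rho(\gamma)(L(c))$, so $L$ is an anchoring of $\rho$. As $h$ is injective it remains to check that $\calT$ is $\bdy D_Z$--essential: given an edge $\cover{e}$ of $\cover{\calT}$, pick an ideal tetrahedron $\cover{t}$ containing it; the shape of $\phi(\cover{t})$ lies in $\CC\setminus\{0,1\}$, so $D_Z(\cover{t})$ is nondegenerate, its four ideal vertices are distinct, and in particular $\bdy D_Z$ separates the two endpoints of $\cover{e}$.

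For the converse I would build the solution from $L$. Any two vertices of an ideal tetrahedron are joined by an edge, so $L$--essentiality forces the four labels at the vertices of each ideal tetrahedron $\cover{t}$ of $\cover{\calT}$ to be pairwise distinct; their cross-ratio is then a well-defined $z_{\cover{t}} \in \CC\setminus\{0,1\}$, which is unchanged if $\cover{t}$ is replaced by $\gamma\cdot\cover{t}$ because $L(\gamma\cdot\cover{v}) = \rho(\gamma)(L(\cover{v}))$ and $\rho(\gamma)$ preserves cross-ratios. This yields shapes $Z = (z_t)$ on $\calT$ (with the three cyclic values $z$, $1/(1-z)$, $(z-1)/z$ attached to the three pairs of opposite edges in the standard way, again consistently by equivariance). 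To see $Z$ solves the gluing equations I would straighten: let $\widetilde{D}$ be the $\rho$--equivariant map that is geodesically straight on each ideal tetrahedron of $\cover{\calT}$ and limits to $L(\cover{v})$ at each ideal vertex $\cover{v}$. Adjacent tetrahedra share an ideal triangle, and so do their straightenings, so $\widetilde{D}$ is well defined and continuous. Now fix an edge $e$, lift it to $\cover{e}$ with ends $\cover{u}, \cover{v}$, and normalise by an isometry so that $L(\cover{u}) = \infty$ and $L(\cover{v}) = 0$; the tetrahedra of $\cover{\calT}$ around $\cover{e}$ straighten to ideal tetrahedra with two vertices at $0$ and $\infty$ and third vertices $w_1, w_2, \dots \in \CC\setminus\{0\}$, consecutive ones sharing the face through $\{0,\infty\}$, and the cyclic sequence closes up with $w_{k+1} = w_1$ because $\cover{e}$ is a genuine edge of the triangulation $\cover{\calT}$ of $\cover{M}$. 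The composite of the Möbius maps $w_j \mapsto w_{j+1}$ fixes $0$, $\infty$ and $w_1$, hence is the identity, and reading off its multipliers recovers exactly the edge equation at $e$. Thus $Z$ is a solution with pseudo-developing map $\widetilde{D}$, and since $\widetilde{D}$ is $\rho$--equivariant its holonomy equals $\rho$, an isometry of $\HH^3$ being determined by its action on the ideal vertices of a single nondegenerate straightened tetrahedron.

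The hard part is this converse direction: one must match the cross-ratio conventions above to the precise forms of Thurston's gluing equations and of the pseudo-developing map in \cite{SegermanTillmann11}, confirm that $\widetilde{D}$ is a legitimate choice of the pseudo-developing map associated to $Z$ (so that $\rho_Z = \rho$ holds on the nose), and check that the conventions admit the flat shapes that occur when four labels of a tetrahedron are concyclic. If "solution" is taken to include the logarithmic (angle-sum) equations, one must additionally verify that the straightened tetrahedra wind exactly once about each edge; this follows from $\cover{\calT}$ being an honest triangulation together with an orientation argument, but it is the one step that is not purely formal. Producing the anchoring from a solution, by contrast, is routine once the existence and compatibility of $\bdy D_Z$ are quoted.
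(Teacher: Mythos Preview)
Your proposal is correct and follows essentially the same route as the paper: in one direction you take $\bdy D_Z$ (post-composed with the conjugating isometry) as the anchoring and use nondegeneracy of the shapes to get $L$--essentiality; in the other you define shapes as cross-ratios of the labels, use equivariance for well-definedness, and verify the edge equation via the cyclic closing-up of the tetrahedra around a lifted edge. The paper's write-up is terser (it simply asserts ``the product of the shapes of the $(\cover{t}_i)$ around $\cover{e}$ is $1$'' and ``choosing the first three vertices correctly, $\bdy D_Z = L$''), and it does not address the logarithmic form of the equations, so your caveat there is a genuine extra care rather than a divergence in strategy.
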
 

\begin{remark}
As mentioned in the introduction, the second property exactly says that $\calT$ is \emph{$\rho$--regular} in the sense of \cite[Definition~4.2]{DimofteGaroufalidis13}. 
\end{remark}

\begin{proof}[Proof of \reflem{RecoverRho}]
Let $L$ be the given anchoring. 
Suppose that $t$ is a tetrahedron of $\calT$.
Choose a lift $\cover{t}$ of $t$ to $\cover{\calT}$.
Since $\calT$ is $L$--essential, the vertices of $\cover{t}$ map under $L$ to four distinct points of $\bdy_\infty \HH^3$.
The order of the model vertices of $t$ chooses a particular cross-ratio of these four points;
this gives a shape for $\cover{t}$.
Since $L$ is $\pi_1(M)$--equivariant, this shape is independent of the choice of lift $\cover{t}$.
Thus we get a shape for $t$ and so a collection of shapes $Z$. 

Suppose that $e$ is an edge of $\calT$.
Choose a lift $\cover{e}$ of $e$ to $\cover{\calT}$.
Let $(\cover{t}_i)$ be the cyclicly ordered list of tetrahedra of $\cover{\calT}$ around $\cover{e}$.
This gives a cyclic order to the ideal vertices of $\cover{\calT}$ incident to the tetrahedra $(\cover{t}_i)$ but not contained in $\cover{e}$.
We deduce that the product of the shapes of the $(\cover{t}_i)$ around $\cover{e}$ is $1$.
Thus the shapes $Z$ solve the gluing equations for $\calT$. 
Choosing the first three vertices correctly, the pseudo-developing map at infinity $\bdy D_Z$ equals the labelling $L$.
Likewise, $\rho_Z$ equals $\rho$. 

Now suppose that $Z$ is a solution of Thurston's gluing equations on $\calT$.
Let $D_Z$, $\rho_Z$, and $\bdy D_Z$ be the maps derived from $Z$.
After an appropriate conjugation, we may assume that $\rho_Z$ equals $\rho$.
We deduce that for each boundary component $c$ of $\Delta_M$, the point $\bdy D_Z(c)$ lies in $\Fix_\rho(c)$.
Moreover, for any $\gamma \in \pi_1(M)$ we have that $\bdy D_Z(\gamma \cdot c) = \rho(\gamma)(\bdy D_Z(c))$.
That is, $\bdy D_Z$ is an anchoring of $\rho$.
We claim that $\calT$ is $\bdy D_Z$--essential. 
This holds because every ideal tetrahedron under $D_Z$ has four distinct vertices on $\bdy_\infty \HH^3$.
\end{proof}

\subsubsection{Non-anchorable representations}

The following lemma controls the behaviour of non-anchorable representations for manifolds with torus boundary.
See also~\cite[Section~10.2]{BRVD05}.
As a piece of notation we set $K_4 = \ZZ/2\ZZ \oplus \ZZ/2\ZZ$.

\begin{lemma}
\label{Lem:NonAnchorable}
Suppose that $M$ is compact, connected, and oriented. 
Suppose that $c \in \Delta_M$ covers a torus boundary component of $M$.
Suppose that $\rho \from \pi_1(M) \to \PSL(2,\CC)$ is a representation.
If the set $\Fix_\rho(c)$ is empty then $\rho(\Stab(c)) \isom K_4$.
\end{lemma}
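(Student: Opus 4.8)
The plan is to analyse the subgroup $G = \rho(\Stab(c))$ of $\PSL(2,\CC)$ under the hypothesis that it has no fixed point on $\bdy_\infty \HH^3 = \CP^1$. First I would recall that $\Stab(c)$ is exactly (a conjugate of) the image of $\pi_1(T)$ in $\pi_1(M)$, where $T$ is the torus boundary component covered by $c$; in particular $\Stab(c)$ is a quotient of $\ZZ \oplus \ZZ$, so it is abelian and generated by (at most) two commuting elements, and hence $G$ is an abelian subgroup of $\PSL(2,\CC)$ generated by two commuting elements $A = \rho(\mu)$ and $B = \rho(\lambda)$.

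Next I would run through the classification of elements of $\PSL(2,\CC)$ and their commuting partners. A single non-trivial element $A$ is either parabolic or loxodromic (including elliptic of infinite or finite order), and in each case $A$ has either one fixed point (parabolic) or two fixed points (loxodromic/elliptic) on $\CP^1$. If $A$ is parabolic, any $B$ commuting with $A$ fixes the unique fixed point of $A$, so $G$ fixes that point, contradicting $\Fix_\rho(c) = \emptyset$. If $A$ is loxodromic or is elliptic of infinite order, then its centraliser in $\PSL(2,\CC)$ is the one-parameter group fixing the same pair of points $\{p_+, p_-\}$, so again $G$ fixes $p_+$ (indeed the whole pair), a contradiction. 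Hence every non-trivial element of $G$ must be elliptic of finite order, and moreover no two non-trivial elements of $G$ can share a fixed pair of points (otherwise they would both lie in the same abelian group fixing that pair, and $G$ would fix a point).

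The key step is then: a finite subgroup of $\PSL(2,\CC) = \Isom^+(\HH^3)$ is conjugate into $\SO(3) = \PSL(2,\RR) \cap \ldots$ — more precisely, any finite subgroup of $\PSL(2,\CC)$ is conjugate to a finite subgroup of $\PSU(2) \cong \SO(3)$, by averaging a metric / taking the barycentre of a finite orbit in $\HH^3$. Since $G$ is abelian and finite, it is conjugate to a finite abelian subgroup of $\SO(3)$. The finite abelian subgroups of $\SO(3)$ are cyclic or $K_4 = \ZZ/2\ZZ \oplus \ZZ/2\ZZ$; the cyclic ones fix the two poles of their common rotation axis, which would give $\Fix_\rho(c) \neq \emptyset$. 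Therefore $G \isom K_4$, as claimed. (One should note that $G$ is genuinely finite: it is a finitely generated abelian group all of whose non-trivial elements are finite-order elliptics, and a two-generated abelian group of this kind is finite — either by the $\SO(3)$ argument applied after noting each generator lies in a compact one-parameter subgroup, or directly: $A, B$ lie in a common torus only if they share an axis, which is excluded, so the group they generate embeds in a product of two finite cyclic groups.)

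I expect the main obstacle to be the bookkeeping around the possibility that $\Stab(c)$ fails to surject onto a rank-two group, or that one of the generators is trivial: if $G$ is cyclic (including trivial) we must carefully rule it out using the no-fixed-point hypothesis, and if $G$ is generated by two elliptics sharing no axis we must confirm that the group they generate is finite and hence lands in $\SO(3)$. The cleanest route is probably: (i) reduce to $G$ abelian and two-generated; (ii) show no element is parabolic or infinite-order elliptic/loxodromic, using centralisers; (iii) conclude $G$ finite, conjugate into $\SO(3)$; (iv) invoke the classification of finite abelian subgroups of $\SO(3)$ and discard the cyclic case by the fixed-point hypothesis. The orientability hypothesis on $M$ is what guarantees $\rho$ lands in $\PSL(2,\CC) = \Isom^+(\HH^3)$ rather than the full isometry group, which is needed so that the relevant finite subgroups are exactly the rotation groups.
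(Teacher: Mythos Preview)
Your proposal is correct and follows essentially the same approach as the paper: both observe that $\rho(\Stab(c))$ is an abelian quotient of $\ZZ^2$ and then invoke the classification of abelian subgroups of $\PSL(2,\CC)$ acting on $\bdy_\infty\HH^3$. The paper simply cites this classification as \cite[Theorem~4.3.6]{Beardon95}, whereas you unpack its proof; your finiteness step is the only slightly loose point, but it follows immediately since a torsion quotient of $\ZZ^2$ is finite.
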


\begin{proof}
The subgroup $\rho(\Stab(c))$ is isomorphic to a quotient of $\ZZ^2$ and is thus abelian.
The only one of these which acts without fixed points on $\bdy_\infty\HH^3$ is $K_4$~\cite[Theorem~4.3.6]{Beardon95}.
\end{proof}



\begin{example}
\label{Exa:Mattress}
Suppose that $M$ is a $\ZZ/2\ZZ$--homology \emph{torus-cross-interval}:
that is, with coefficients in $\ZZ/2\ZZ$
\begin{itemize}
\item
the manifold $M$ has the homology of the two-torus and 
\item
the inclusion of each boundary component induces an isomorphism on homology.
\end{itemize}
So $H_1(M; \ZZ/2\ZZ)$ is isomorphic to $K_4$.  
Let $\rho \from \pi_1(M) \to \PSL(2, \CC)$ be any representation with image isomorphic to $K_4$.
The elements of order two are conjugate to rotations by $\pi$. 
Since they commute, they fix perpendicular and crossing axes.

Concretely, take $M$ equal to \texttt{m367} from the SnapPy census~\cite{SnapPy24};
this is an integral homology torus-cross-interval.
The fundamental group, using capital letters for inverses, is as follows.
\[
\pi_1(M) \isom \group{a, b}{aaaBAABaabAAAbaabAAB}
\]
The meridians and longitudes of the two cusps are as follows.
\[
\mu_0 = Ab
\quad
\lambda_0 = aaBAABaaa
\quad
\mu_1 = b
\quad
\lambda_1 = AABaaaBAA
\]
So we may take $\rho(a) = X$, take $\rho(b) = Y$, and extend to obtain the desired representation.

Other examples in the SnapPy cenus having such a representation include
\texttt{m125},
\texttt{m202},
\texttt{m203},
\texttt{m295},
\texttt{m328},
\texttt{m359}, and
\texttt{m366}.
\end{example}

\begin{remark}
It follows from \reflem{RecoverRho} that a non-anchorable representation cannot be recovered from a solution to Thurston's gluing equations for any ideal triangulation.
\end{remark}

\subsection{Partially ideal triangulations}

Our results are stated in terms of ideal triangulations. 
However, our constructions make use of partially ideal triangulations. 
We extend \refdef{LEssential} to this case as follows.

\begin{definition}
\label{Def:LEssentialPartiallyIdeal}
Suppose that $\calT$ is a partially ideal triangulation of $M$.
Suppose that $L$ is a labelling of $\Delta_M$, as in \refdef{Labelling}.
We say that $\calT$ is \emph{weakly $L$--essential} if there are no $L$--inessential edges between ideal vertices.
If in addition there are no edge loops based at material vertices then we say that $\calT$ is \emph{$L$--essential}.
\end{definition}

\begin{remark}
\label{Rem:InsulatedSimplicialLEssential}
Suppose that $\calT$ is an insulated simplicial partially ideal triangulation.
Then $\calT$ is $L$--essential for any labelling $L$.
\end{remark}

We use the notion of weak $L$--essentiality only in \refapp{HMP}.

\subsection{Essential foams}

\begin{definition}
\label{Def:FoamLabellingEssential}
Suppose that $\calT$ is a partially ideal triangulation of $M$.
Suppose that $\calF$ is the foam dual to $\calT$.
Suppose that $L \from\Delta_M \to \calL$ is a labelling of $\Delta_M$.
Then we obtain a function $L_\calF$ from peripheral regions of $\cover{M} - \cover{\calF}$ to $\calL$ as follows.
Suppose that $C$ is a peripheral region that contains the boundary component $c \in \Delta_M$.
Then we set $L_\calF(C) = L(c)$.
In what follows, we abuse notation and suppress the subscript $\calF$. 
We also say that a peripheral region $C$ \emph{has the label} $L(C)$.

Let $e$ be an edge of $\calT$.
Let $f$ be the corresponding dual face in $\calF$.
We say that $f$ is \emph{$L$--essential} or \emph{$L$--inessential} as $e$ is.
We say that $\calF$ is \emph{(weakly) $L$--essential} if $\calT$ is (weakly) $L$--essential.
\end{definition}

The following remark rephrases what it means for a face of a foam to be $L$--essential.

\begin{remark}
\label{Rem:NoLabelTouch}
With notation as in \refdef{FoamLabellingEssential},
suppose that $\cover{f}$ is a lift of $f$ to $\cover{\calF}$.
Then the face $f$ is $L$--inessential if and only if the regions of $\cover{M} - \cover{\calF}$ on either side of $\cover{f}$ have the same label. 

Similarly, an edge with both ends at a material vertex of $\calT$ corresponds to a face of $\calF$ with the same material region on both sides.
\end{remark}

\subsection{Edge loops and cyclic edges}

\begin{definition}
\label{Def:EdgeLoop}
An edge $e$ of a foam that has both ends at a single vertex is an \emph{edge loop}.
Any lift of $e$ to a cover is called a \emph{cyclic edge}.
\end{definition}  

Let $\calF$ be a foam in a manifold $M$.
Let $e$ be an edge loop in $\calF$ based at a vertex $v$.
Let $\calN$ be a small neighbourhood of $v$.
There are six disks of $(\calF \cap \calN) - \calF^{(1)}$.
See \reffig{PointTypesOnFoam3}.
Let $d$ be the disk that meets both ends of $e$.
Let $f$ be the face of $\calF$ that contains $d$.
There are now two cases:
the face $f$ does or does not contain other disks of $(\calF \cap \calN) - \calF^{(1)}$. 

Suppose that $f$ does not contain any other disks of $(\calF \cap \calN) - \calF^{(1)}$.
Then $f$ is \emph{degree-one}:
that is, $f$ has only one model edge (which is mapped to $e$).  
This is the configuration shown in \reffig{DegOne}.

Suppose instead that $f$ does contain other disks of $(\calF \cap \calN) - \calF^{(1)}$. 
If $M$ is orientable then the edge $e$ forms the core of a solid torus embedded in $M$. 
See \reffig{S2xS1Example}.
If $M$ is non-orientable then the edge $e$ forms the core of a solid torus or a solid Klein bottle embedded in $M$. 


\begin{figure}[htbp]
\includegraphics[height = 4.cm]{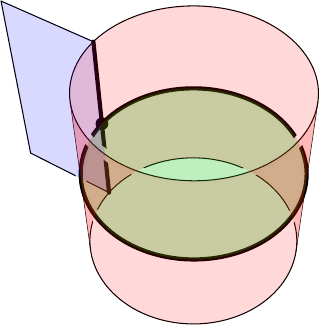}
\caption{An edge loop forming the boundary of a degree-one face.}
\label{Fig:DegOne}
\end{figure}

Our main hypothesis implies that there are no degree-one faces.

\begin{lemma}
Suppose that $\calT$ is an $L$--essential ideal triangulation.
Let $\calF$ be the dual foam.
Then $\calF$ has no degree-one faces.
\end{lemma}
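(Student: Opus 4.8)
The plan is to dualise and reduce to a statement about edge valences. Under the correspondence between $\calF$ and $\calT$, a face $f$ of $\calF$ is dual to an edge $e$ of $\calT$, and the number of model edges of $f$ equals the number of corners of $f$ at zero-cells of $\calF$, which in turn equals the number of pairs $(s,\bar a)$ consisting of a tetrahedron $s$ of $\calT$ and a model edge $\bar a$ of $s$ with $\bar a$ mapping onto $e$. That count is precisely the valence of $e$. So $\calF$ has a degree-one face exactly when $\calT$ has an edge of valence one, and it suffices to prove that an $L$--essential ideal triangulation has no such edge. I would argue by contradiction: suppose $e$ is an edge of $\calT$ of valence one.

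Then there is a unique tetrahedron $t$ of $\calT$ and a unique model edge $\bar e = [uv]$ of $t$ mapping onto $e$. Let $w_1,w_2$ be the remaining model vertices of $t$, so the two model faces of $t$ containing $\bar e$ are $F_1 = [uvw_1]$ and $F_2 = [uvw_2]$. Since the corner of $t$ at $\bar e$ is the only tetrahedron corner lying over $e$, the cycle of faces around $e$ must close up through a face pairing $\psi \from F_1 \to F_2$. Because $\bar e$ is the unique model edge over $e$, this $\psi$ carries the edge $[uv]$ of $F_1$ onto the edge $[uv]$ of $F_2$; hence $\psi$ preserves $\{u,v\}$ as a set and sends $w_1$ to $w_2$.

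Next I would lift to $\cover M$. The key point is that the valence of an edge is preserved under passing to a cover: $\phi_M$ is a local homeomorphism near the interior of any edge and carries the induced triangulation of $\cover M$ onto that of $M$, so an edge and any of its lifts have the same valence. Choose a lift $\cover t$ of $t$; its model edge over $\bar e$ is a lift $\cover e$ of $e$, still of valence one, so the two faces of $\cover t$ containing that model edge must be identified with one another by a lift $\cover\psi$ of $\psi$. This $\cover\psi$ sends the model vertex of $\cover t$ over $w_1$ to the model vertex of $\cover t$ over $w_2$; hence these two vertices are the same ideal vertex of $\cover\calT$. Therefore the model edge of $\cover t$ joining them is a loop, and it is a lift of the edge $a$ of $\calT$ which is the image of the model edge $[w_1 w_2]$ of $t$. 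Both ends of this lift lie at one ideal vertex, so they receive the same label, and $a$ is $L$--inessential --- contradicting the hypothesis that $\calT$ is $L$--essential.

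I expect the main obstacle to be the bookkeeping in the lifting step: one must be certain that the face pairing identifying the two faces around $\cover e$ really identifies them inside a single lift $\cover t$ of $t$, rather than with faces of some other lift, and this is exactly where the invariance of valence under covering is used; one must then handle with care the combinatorially degenerate lifted tetrahedron $\cover t$ in which the vertices over $w_1$ and $w_2$ have been identified. The remaining verifications --- that the degree of a face equals the valence of the dual edge, and that $L$--inessentiality does not depend on the chosen lift --- are routine.
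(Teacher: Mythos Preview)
Your argument is correct and is essentially the dual of the paper's proof: where the paper works in the foam and finds a face $g$ with the same region on both sides (opposite the degree-one face at the vertex $v$), you work in the triangulation and find the edge $a = [w_1w_2]$ opposite the valence-one edge, showing its endpoints are identified. Your lifting step (valence is covering-invariant, so the local face-pairing picture persists upstairs) is a combinatorial substitute for the paper's homotopy observation that the degree-one edge loop bounds a disk and hence lifts to a loop; both arrive at the same conclusion that the offending edge is $L$--inessential for any $L$.
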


\begin{proof}
Suppose for a contradiction that $f$ is a degree-one face incident to a vertex $v$.
Let $\calN$ be a small regular neighbourhood of $f$.
Thus $\calN$ is a three-ball.
See \reffig{DegOne} for a picture of $\calF \cap \calN$.
Let $g$ be the face of $\calF$ that meets $v$ along the two edges that $f$ does not. 
Consulting \reffig{DegOne}, we see that $g$ meets the same region on both sides.
Since the edge loop bounds a disk, it lifts to a loop in the universal cover.
Thus any lift of $g$ also meets the same region on both sides.
Thus $g$ is $L$--inessential for any labelling $L$.
\end{proof}

On the other hand, $L$--essentiality does not rule out edges at the cores of solid tori or solid Klein bottles.
These show up in practice, for example in foams dual to the canonical triangulations of hyperbolic knot complements.

Previous work~\cite{Segerman17} of the third author addresses the connectivity of the set of triangulations without degree-one edges.

\subsection{Basic moves on triangulations}
\label{Sec:TriangulationBasicMoves}

We use various moves to connect triangulations (and their dual foams) to each other.
We illustrate these moves on foams.
Suppose that $\calF$ is a foam in a manifold $M$.

\subsubsection{Bistellar moves} 
\label{Sec:BistellarMoves}

The three-dimensional bistellar moves~\cite{Pachner78} are the 1-4, 2-3, 3-2, and 4-1 moves.

The \emph{1-4 move} can be applied to any vertex of $\calF$ and creates a new material region.
See \reffig{1-4}.
The reverse move is the 4-1 move. 
It can be performed on a material region whose boundary in $\calF$ is combinatorially identical to the boundary of a three-simplex.

\begin{figure}[htbp]
\subfloat[Before.]{
\includegraphics[height=4.cm]{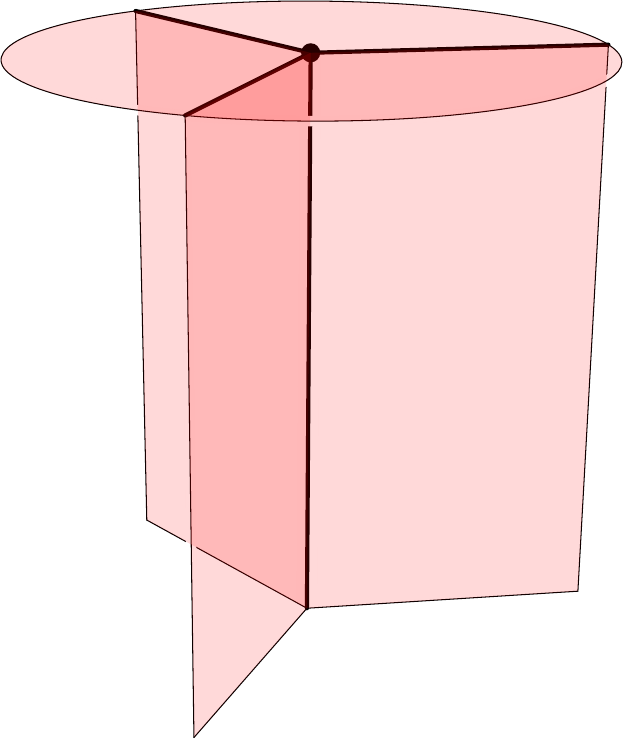}
\label{Fig:BubbleToDo1-4Step0}
}
\quad
\subfloat[After.]{
\includegraphics[height=4.cm]{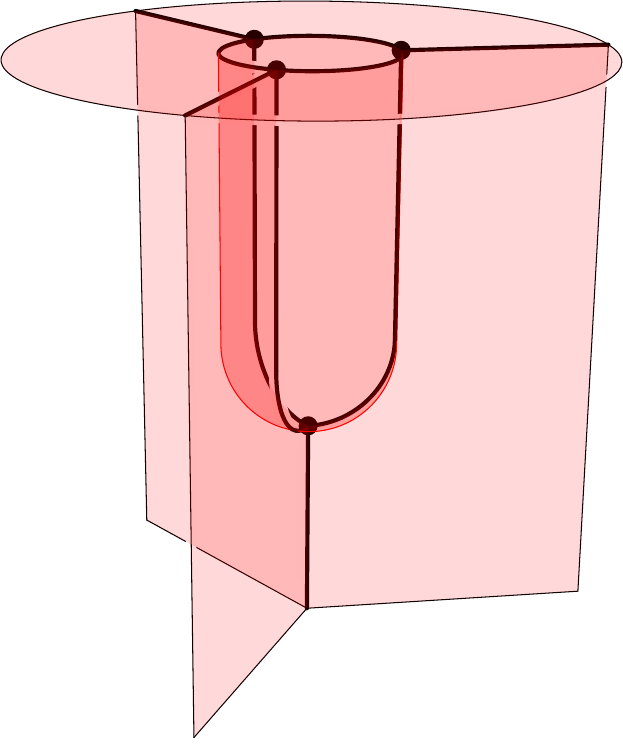}
\label{Fig:BubbleToDo1-4Step2}
}
\caption{A 1-4 move.}
\label{Fig:1-4}
\end{figure}

\begin{figure}[htbp]
\subfloat[]{
\includegraphics[height = 4.cm]{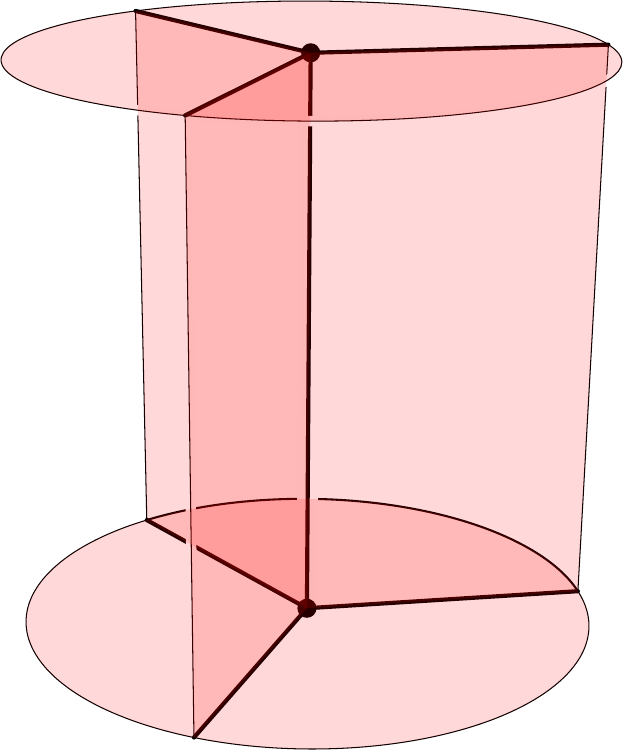}
\label{Fig:2-3Before}
}
\qquad
\subfloat[]{
\includegraphics[height = 4.cm]{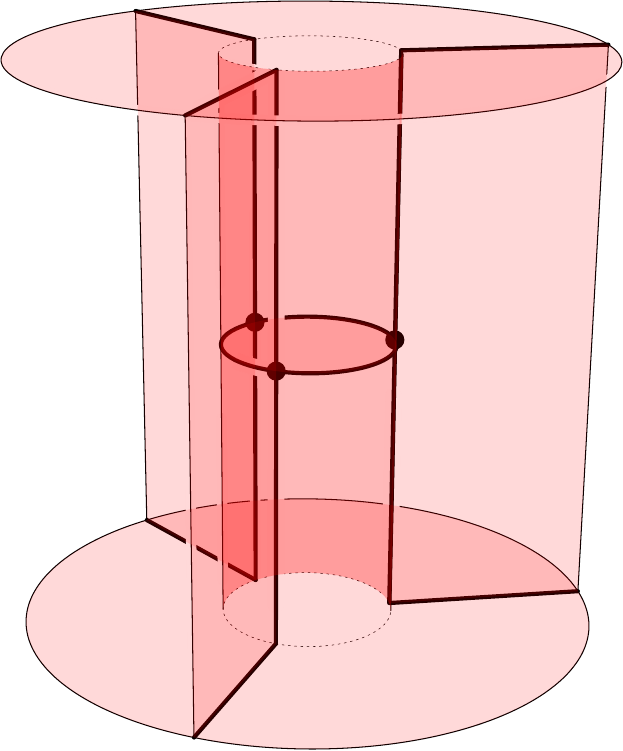}
\label{Fig:2-3After}
}
\caption{The 2-3 move.}
\label{Fig:2-3}
\end{figure}

The \emph{2-3 move} (called the \emph{T move} by Matveev~\cite[page~14]{Matveev07}) can be performed along any edge of $\calF$ that is not an edge loop. 
See \reffig{2-3}.
The reverse move is the 3-2 move. 
It can be performed on a triangular face whose closure is embedded in $\calF$.

\subsubsection{The 0-2 move}

The \emph{0-2 move} (called the \emph{ambient lune} move by Matveev~\cite[page~17]{Matveev07}) is applied along any path $\delta$ embedded in a face of $\calF$. 
It creates two new vertices and a new bigon face.
See \reffig{0-2}.
The reverse move is the 2-0 move. 
The 2-0 move produces a foam in $M$ unless the disks labelled $f$ and $f'$ in \reffig{0-2After} are part of the same face, or if the foam has only two vertices.

\begin{figure}[htbp]
\subfloat[]{
\labellist
\hair 2pt \small
\pinlabel $\delta$ [r] at 144 200
\endlabellist
\includegraphics[height = 4cm]{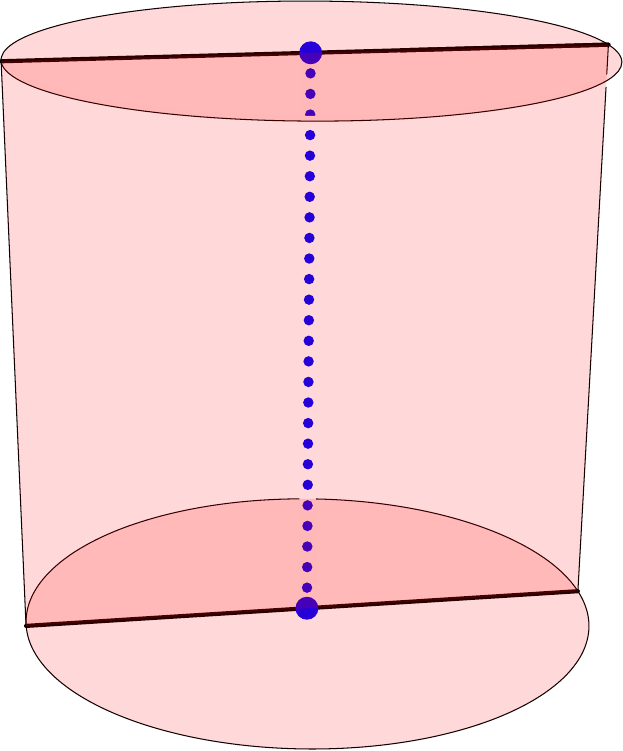}
\label{Fig:0-2Before}
}
\qquad
\subfloat[]{
\labellist
\hair 2pt \small
\pinlabel $f$ at 58 195
\pinlabel $f'$ at 242 200
\endlabellist
\includegraphics[height = 4cm]{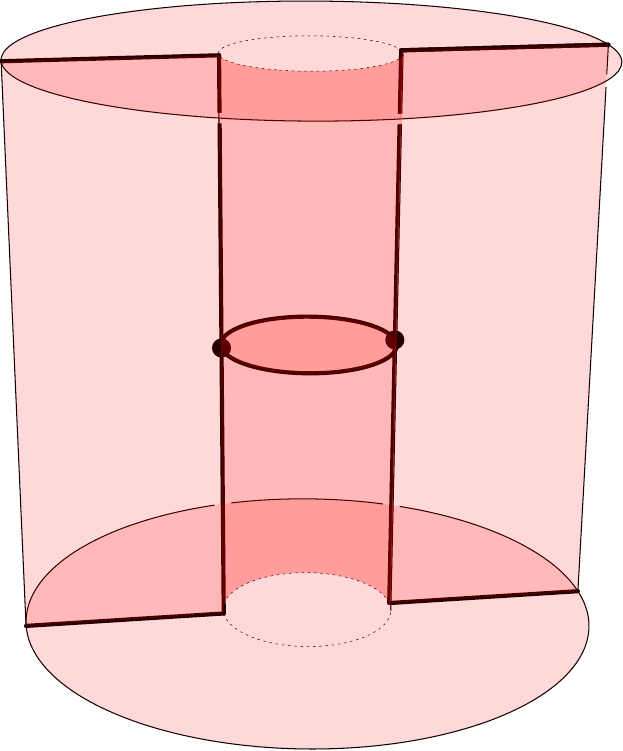}
\label{Fig:0-2After}
}
\caption{The 0-2 move. The vertical dotted arc $\delta$ in \reffig{0-2Before} indicates the path along which the 0-2 move acts.}
\label{Fig:0-2}
\end{figure}

\subsubsection{The bubble move}

The \emph{bubble move} is applied on any edge of a foam $\calF$ and creates a new material region. 
(Matveev~\cite[Section~1.2.3]{Matveev07} defines three variants of the bubble move. 
Our bubble move is equivalent to his case involving a ``triple line of the simple subpolyhedron''.)
We choose a point on the edge and a small neighbourhood $\calN$ around that point.
The result of applying the bubble move is the foam $(\calF - \calN) \cup \bdy \calN$.
See \reffig{Bubble}. 
The reverse bubble move can be performed on a material region whose boundary in $\calF$ consists of three bigons.

\begin{figure}[htbp]
\subfloat[]{
\includegraphics[height = 4.cm]{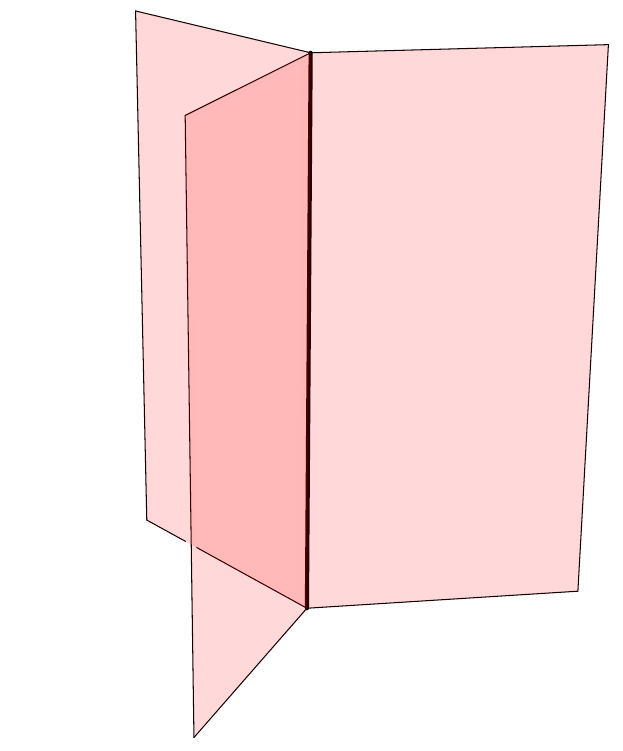}
\label{Fig:BubbleBefore}
}
\qquad
\subfloat[]{
\includegraphics[height = 4.cm]{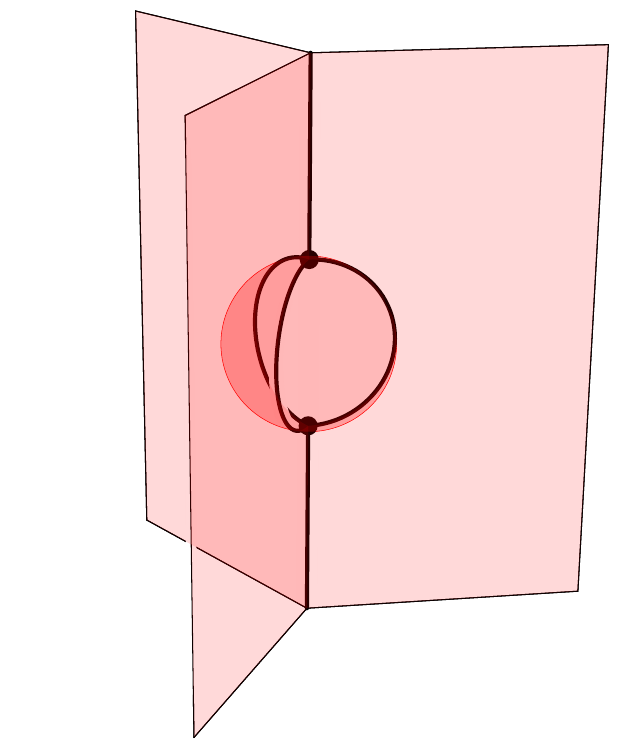}
\label{Fig:BubbleAfter}
}
\caption{The bubble move.}
\label{Fig:Bubble}
\end{figure}

\begin{lemma}
\label{Lem:1-4}
A 1-4 move may be performed by a bubble move followed by a 2-3 move.
\end{lemma}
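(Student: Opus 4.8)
The plan is to use that both the 1-4 move and the composite ``bubble move followed by 2-3 move'' are supported inside a single ball, so that it suffices to check that performing the composite inside that ball reproduces the local modification effected by a 1-4 move. I will argue with triangulations; the reader may prefer to follow along with the dual foam pictures in \reffig{1-4}, \reffig{Bubble}, and \reffig{2-3}.

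Let $t$ be the tetrahedron dual to the vertex $v$ of $\calF$ at which we wish to perform the 1-4 move. Write $v_1, v_2, v_3, v_4$ for its vertices and let $f_i$ be the face of $t$ opposite $v_i$. First I would apply the bubble move of \refsec{BistellarMoves} to the triangle $f_4 = v_1 v_2 v_3$; dually, this is the bubble move along the edge of $\calF$ dual to $f_4$, which is one of the four edges incident to $v$. The result cuts the triangulation apart along $f_4$ and inserts a ``pillow'': a new material vertex $w$ together with two tetrahedra $T_+$ and $T_-$, both combinatorially equal to $w v_1 v_2 v_3$, glued to one another along their three faces of the form $w v_i v_j$, with $T_+$ glued to $t$ along $f_4$ and $T_-$ glued to the cell originally adjacent to $t$ across $f_4$. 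Next I would perform the 2-3 move on the triangle $f_4$, which now separates the distinct tetrahedra $t$ and $T_+$; dually, this is the 2-3 move along the edge of $\calF$ joining $v$ to the vertex dual to $T_+$, and since these two vertices are distinct that edge is not an edge loop, so the move is legitimate. It replaces $t \cup T_+$ by the three tetrahedra $w v_i v_j v_4$, one for each pair $\{i,j\} \subset \{1,2,3\}$, arranged about the new edge $v_4 w$.

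The final step is to read off that these three tetrahedra, together with $T_-$, are exactly the four tetrahedra $w \ast f_1, w \ast f_2, w \ast f_3, w \ast f_4$ obtained by coning $t$ from the interior point $w$ --- that is, the output of a 1-4 move on $t$ --- and that their gluings to the surroundings agree with those produced by a 1-4 move: each $w \ast f_i$ meets the old neighbour of $t$ across $f_i$, while the faces of the form $w v_i v_j$ are matched among the four new tetrahedra, and nothing outside $t$ and its pillow is disturbed. I expect the only delicate point to be the bookkeeping of these face identifications after the 2-3 move --- keeping track of which faces of $T_+$ and $T_-$ get glued to which of the three tetrahedra the 2-3 move produces --- together with confirming that no degenerate identification obstructs the 2-3 move, which, as noted above, it cannot, since the relevant edge of $\calF$ runs between an old and a new vertex. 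Alternatively, and perhaps more transparently, one can drop the vertex labels entirely and simply superimpose the local foam pictures: the ball shown in \reffig{BubbleAfter}, after a single 2-3 move is performed inside it, is combinatorially the ball obtained from the 1-4 move in \reffig{1-4}.
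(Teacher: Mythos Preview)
Your proposal is correct and follows essentially the same approach as the paper: perform the bubble move on one of the four edges of $\calF$ incident to $v$ (dually, insert a pillow at one face of the tetrahedron $t$), then do a 2-3 move between $v$ and one of the two new vertices. The paper's own proof is a two-sentence picture proof in the foam language, while you carry out the dual bookkeeping in the triangulation explicitly; the content is the same.
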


\begin{proof}
We first perform a bubble move on one of the edges incident to the vertex, taking us from \reffig{BubbleToDo1-4Step0} to \reffig{BubbleToDo1-4}. This produces a new material region and two new vertices. We then perform a 2-3 move on the top and middle vertices to produce \reffig{BubbleToDo1-4Step2}.
\end{proof}

\begin{figure}[htbp]
\includegraphics[height=4.cm]{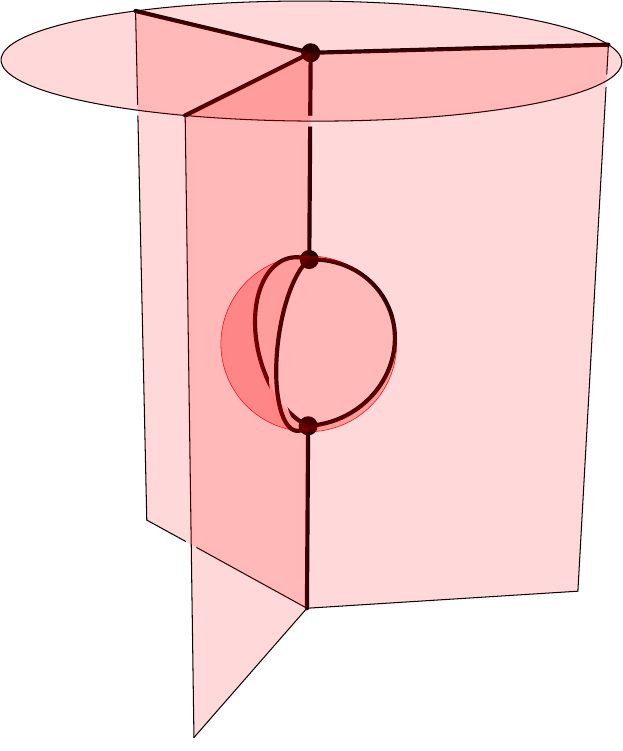}
\caption{A 1-4 move can be performed by doing a bubble move followed by a 2-3 move. Here we show the intermediate stage.}
\label{Fig:BubbleToDo1-4}
\end{figure}

\begin{definition}
\label{Def:Ancestor}
Suppose that $\calF$ is a foam in $M$.
Suppose that $e$ is an edge of $\calF$.
Suppose that $\calN(e)$ is a small regular neighbourhood of $e$.
Suppose that $\calF'$ is the result of applying a 2-3 move to $\calF$ along $e$, supported in $\calN(e)$.
Suppose that $c$ and $c'$ are open cells or open complementary regions of $\calF$ and $\calF'$ respectively.
Suppose there is a point $x$ lying in $c \cap c' - \calN(e)$.
Then we say that $c$ the \emph{ancestor} of $c'$ and $c'$ is the \emph{descendant} of $c$.

We make similar definitions for the various other moves (3-2, 1-4, 4-1, 0-2, 2-0, bubble, reverse bubble).
Finally we make the relation transitive through multiple moves.
\end{definition}

\begin{remark}
We will frequently abuse notation and use the same name for a cell (or complementary region) and its descendants.
\end{remark}

\section{Building $L$--essential triangulations}
\label{Sec:BuildLEssential}

The goal of this section is to prove the following.

\begin{theorem}
\label{Thm:ExistsTriangulation}
Suppose that $M$ is a compact, connected three-manifold with boundary.
Suppose that $L$ is a labelling of $\Delta_M$ with infinite image.
Then there is an $L$--essential ideal triangulation of $M$.
\end{theorem}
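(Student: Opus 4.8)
The plan is to start from any ideal triangulation of $M$, pass to its first barycentric subdivision in order to destroy every $L$--inessential edge at the cost of introducing many material vertices, and then eliminate the material vertices one at a time by an equivariant ``merging'' operation which, steered correctly, introduces no new $L$--inessential edge. The hypothesis that $L$ has infinite image is needed only in the last step. For the first step, observe that the hypothesis on $L$ forces $\Delta_M$ -- the domain of $L$ -- to be infinite, hence $\pi_1(M)$ to be infinite and $M$ to be none of the trivial manifolds (ball, solid torus, $S^2\times[0,1]$, handlebody) whose universal covers have at most two boundary components; so $M$ admits an ideal triangulation $\calT_1$ (see Matveev~\cite{Matveev07}). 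Let $\calT_2$ be the first barycentric subdivision of $\calT_1$, regarded as a partially ideal triangulation of $M$ whose ideal vertices are the vertices of $\calT_1$ and whose material vertices are the barycentres of the positive-dimensional simplices of $\calT_1$. Each edge of $\calT_2$ joins the barycentres of two simplices of $\calT_1$ of distinct dimensions; its endpoints are therefore distinct points of $M$ (so $\calT_2$ has no edge loops) and at most one of them is a vertex of $\calT_1$ (so $\calT_2$ has no edge between two ideal vertices). By \refdef{LEssentialPartiallyIdeal}, $\calT_2$ is thus $L$--essential -- indeed $L'$--essential for every labelling $L'$.

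The remaining, and principal, task is to lower the number of material vertices. I would induct on this number within the class of $L$--essential partially ideal triangulations of $M$: if it is zero we have the desired $L$--essential ideal triangulation, and \refthm{ExistsTriangulation} follows. For the inductive step, choose a material vertex $w$ of an $L$--essential partially ideal triangulation $\calT$; dually, $w$ corresponds to a material region $R$ (an open ball) of $M-\calF$, where $\calF$ is the foam dual to $\calT$. I would merge $R$ into a peripheral region $C$ by drilling a tube joining them and reorganising $\calF$ into a genuine foam. Attaching a ball to a collar of a boundary component by a $1$--handle changes neither the homeomorphism type nor $\bdy M$, so this turns $R$ into part of a peripheral region: it decreases the material-vertex count by one and leaves $\Delta_M$ unchanged, while dually the edges formerly incident to $w$ become incident to the ideal vertex underlying $C$. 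The tube is carried by a \emph{snake}: a path from $R$ to $C$ together with a regular neighbourhood built from local pieces across which the merge is realised by a finite sequence of elementary moves, each producing a genuine foam. Lifting to $\cover{M}$, the snake runs from a fixed lift $\cover{w}$ of $w$ to an elevation $c\in\Delta_M$ inside a lift of $C$, and a new edge incident to $c$ is $L$--inessential only when its other end is an ideal vertex a lift of which carries the label $L(c)$ -- in particular when $\cover{w}$ was already joined to $c$, producing an $L$--inessential edge loop. Each of these conditions rules out only finitely many choices of $c$. Since $L$ has infinite image, $\Delta_M$ contains infinitely many elevations with pairwise distinct labels, so I can choose $c$ -- and a snake reaching it -- to avoid this finite set, taking the snake (via \refalg{CreateSnakePath}) so that the intermediate moves create no $L$--inessential edge and no edge loop at a material vertex. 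The result is an $L$--essential partially ideal triangulation of $M$ with one fewer material vertex, closing the induction.

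\textbf{The main obstacle} is the construction of the snake that drives the inductive step -- essentially the content of \refsec{SnakePaths} and \refalg{CreateSnakePath}. One must (a) show the merge is realised by a legitimate finite sequence of moves producing a valid foam at every intermediate stage, which is a local but delicate analysis of how the drilled tube meets the cells of $\calF$ along the path; and (b) track \emph{all} edges created throughout the process, so that the set of forbidden labels is genuinely finite and can be dodged using the infinite image of $L$, and so that no $L$--inessential edge or bad edge loop appears even transiently. Designing \refalg{CreateSnakePath} so as to meet (b) while respecting (a) is the heart of the argument.
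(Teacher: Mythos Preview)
Your proposal is correct and follows essentially the same route as the paper: barycentrically subdivide an arbitrary ideal triangulation to obtain an $L$--essential partially ideal triangulation, then inductively eliminate material regions by inflating snake paths (via \refalg{CreateSnakePath}), using the infinite image of $L$ to select a label $\ell$ distinct from the finitely many labels adjacent to the chosen material region $D$. The only sharpening worth noting is that the paper chooses a \emph{label} $\ell$ first (avoiding the finite set of labels adjacent to $D$) and then lets \refalg{CreateSnakePath} find a source region and truncate the path to enforce $L$--self-avoidance, rather than directly choosing an elevation $c$ and tracking all intermediate edges as you describe; this makes the finiteness of the forbidden set immediate.
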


See Remarks~\ref{Rem:IdealEssential} and~\ref{Rem:MaterialEssential} for the consequences of this theorem for essential triangulations in the sense of \cite{HodgsonRubinsteinSegermanTillmann15, LuoTillmannYang13}.

\begin{remark}
Note that we do not produce \emph{strongly essential} triangulations, as given in \cite[Definitions~3.2 and~3.5]{HodgsonRubinsteinSegermanTillmann15}.
In particular, our main tool (\refdef{InflatingSnakePath}) prevents strong essentiality.
\end{remark}

\subsection{Recovering representations}
\label{Sec:AppHypGeom}

Thurston~\cite[Chapter~4]{Thurston80} was the first to generate representations from ideal triangulations.
Weeks' heuristic algorithm, implemented in SnapPy~\cite{SnapPy24}, finds triangulations that are not only essential, but also geometric. 
SnapPy is also very successful at finding solutions for representations coming from (large) hyperbolic Dehn filling.
However, not every representation can be recovered from every essential triangulation.
There may even be entire components of the representation variety that are ``missed'' by certain essential triangulations. 
See~\cite[Sections~4 and~11.3]{Segerman12}.
In this section, we show that under mild conditions on $\rho$ there exists an ideal triangulation of $M$ for which a solution to Thurston's gluing equations recovers $\rho$.

Previous work by the third author~\cite{Segerman12} and Goerner-Zickert~\cite{GoernerZickert18} used solutions to alternative sets of equations to generate representations. 

\begin{definition}
\label{Def:InfinitelyAnchorable}
Suppose that $\rho \from \pi_1(M) \to \PSL(2,\CC)$ is an anchorable representation.
Let $\Gamma = \rho(\pi_1(M))$.
Suppose that for some $c$ in $\Delta_M$ there is some $z \in \Fix_\rho(c)$ such that the orbit $\Gamma \cdot z$ is infinite.
Then we say that $\rho$ is \emph{infinitely anchorable} (at $c$).
\end{definition}

Suppose that $M$ is a three-manifold with toroidal boundary whose interior admits a finite volume hyperbolic metric.
Then any discrete and faithful representation is infinitely anchorable.

As a simple non-example, suppose that $M$ is the lens space $L(p, q)$, minus a ball.
There is a representation $\rho$ mapping the generator to an elliptic element of order $p$.
The boundary $\bdy M$ is a single sphere. 
Thus $\Delta_M$ is a collection of $p$ two-spheres.
For each $c$ in $\Delta_M$, the set $\Fix_\rho(c)$ is all of $\bdy_\infty \HH^3$.
Thus $\rho$ is anchorable, but not infinitely anchorable.

\begin{corollary}
\label{Cor:RepTriangulation}
Suppose that $M$ is a compact, connected, oriented three-manifold with non-empty boundary.
Suppose that $\rho \from \pi_1(M) \to \PSL(2,\CC)$ is infinitely anchorable.
Then there is an ideal triangulation $\calT$ of $M$ satisfying both of the following equivalent properties.
\begin{itemize}
\item There is an anchoring $L$ of $\rho$ such that $\calT$ is $L$--essential.
\item There is a solution $Z$ to Thurston gluing equations for $\calT$ so that $\rho_Z$ is conjugate to $\rho$. 
\end{itemize}
\end{corollary}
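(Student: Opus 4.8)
The plan is to assemble the statement from \refthm{ExistsTriangulation} and \reflem{RecoverRho}. The equivalence of the two displayed properties is exactly \reflem{RecoverRho} (together with the remark that its second property is $\rho$--regularity), so it suffices to produce an anchoring $L$ of $\rho$ and an $L$--essential ideal triangulation $\calT$ of $M$. By \refthm{ExistsTriangulation}, the triangulation exists once we have an anchoring of $\rho$ whose image is infinite; producing such an anchoring is the only place the hypothesis of infinite anchorability is used.

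So first I would unwind \refdef{InfinitelyAnchorable}: there is some $c \in \Delta_M$ and some $z \in \Fix_\rho(c)$ with $\Gamma \cdot z$ infinite, where $\Gamma = \rho(\pi_1(M))$. I would then construct $L$ by the recipe in the forward direction of \reflem{Anchoring}, but with the orbit of $c$ ``seeded'' by $z$: fix a system of representatives for the action of $\pi_1(M)$ on $\Delta_M$ with $c$ representing its own orbit, set $L(c) = z$, choose $z_{c'} \in \Fix_\rho(c')$ for every other representative $c'$ (possible since $\rho$ is anchorable), and extend by $L(\gamma \cdot c') = \rho(\gamma)(z_{c'})$. The one point needing care is well-definedness: if $\gamma \cdot c' = \gamma' \cdot c'$ then $\gamma^{-1}\gamma' \in \Stab(c')$, and $\rho(\gamma^{-1}\gamma')$ fixes $z_{c'}$ because $z_{c'} \in \Fix_\rho(c')$, so $\rho(\gamma)(z_{c'}) = \rho(\gamma')(z_{c'})$. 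Equivariance is built into the formula, so $L$ is an anchoring in the sense of \refdef{Anchoring}, and its image contains $\Gamma \cdot z$, hence is infinite.

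Applying \refthm{ExistsTriangulation} to this $L$ then yields an $L$--essential ideal triangulation $\calT$ of $M$, which is the first bullet; \reflem{RecoverRho} supplies the second bullet and the asserted equivalence. There is no serious obstacle here: the corollary is a packaging of two earlier results, and the only genuinely new observation over \reflem{Anchoring} is that seeding the orbit of $c$ with an infinite-orbit fixed point forces the anchoring to have infinite image, exactly as \refthm{ExistsTriangulation} demands. (The orientability hypothesis on $M$ is not needed for this argument; it is present only so that holonomy representations land in $\PSL(2,\CC)$ in the ambient discussion.)
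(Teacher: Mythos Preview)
Your proposal is correct and follows essentially the same approach as the paper: construct an anchoring of $\rho$ with infinite image and then apply \refthm{ExistsTriangulation} and \reflem{RecoverRho}. The paper's proof is terser --- it simply asserts that since $\rho$ is infinitely anchorable ``we may arrange matters so that the image of $L$ is infinite'' --- whereas you spell out exactly how to arrange this (seed the orbit of $c$ with the infinite-orbit fixed point $z$ and extend equivariantly), but this is precisely the implicit content of that sentence.
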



Combined with \refrem{MaterialEssential}, this corollary gives the triangulation required for, and recovers the first consequence of, Theorem~1.1 in~\cite{LuoTillmannYang13}.

\begin{proof}[Proof of \refcor{RepTriangulation}]
By hypothesis $\Delta_M$ is non-empty.
Since $\rho$ is anchorable, it admits an anchoring $L$ by \reflem{Anchoring}.
Since $\rho$ is infinitely anchorable, we may arrange matters so that the image of $L$ is infinite.
The first conclusion follows from \refthm{ExistsTriangulation}.
The second follows from \reflem{RecoverRho}.
\end{proof}

\begin{corollary}
\label{Cor:RepFilling}
Suppose that $M$ is an oriented, compact, connected three-manifold with non-empty toroidal boundary.
Suppose that $S$ is a collection of filling slopes (some of which may be $\infty$, meaning that we do not fill).
Suppose that $M(S)$, the resulting Dehn filled manifold, is hyperbolic with canonical representation $\rho_S$. 
Let $\iota \from M \to M(S)$ be the resulting inclusion.
Let $\rho = \iota^*(\rho_S)$.
Then there is an ideal triangulation $\calT$ of $M$ and a solution $Z$ of Thurston's gluing equations for $\calT$ so that $\rho_Z$ is conjugate to $\rho$. 
\end{corollary}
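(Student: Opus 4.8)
\subsection*{Proof proposal}

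The plan is to deduce \refcor{RepFilling} directly from \refcor{RepTriangulation}: the only thing that needs checking is that the representation $\rho = \iota^*(\rho_S)$ is infinitely anchorable. Write $\Gamma_S = \rho_S(\pi_1(M(S)))$ for the image Kleinian group. Since $M(S)$ is obtained from $M$ by attaching solid tori, van Kampen's theorem shows that $\iota_* \from \pi_1(M) \to \pi_1(M(S))$ is surjective; hence $\Gamma \defeq \rho(\pi_1(M)) = \rho_S(\iota_*(\pi_1(M))) = \Gamma_S$. As $M(S)$ is a finite-volume hyperbolic three-manifold it is aspherical, so $\pi_1(M(S))$ is torsion-free; since $\rho_S$ is faithful, $\Gamma_S$ is a torsion-free, non-elementary Kleinian group.

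Next I would verify that $\rho$ is anchorable. Fix $c \in \Delta_M$; since $\bdy M$ consists of tori, $c$ covers a torus boundary component of $M$, so $\Stab(c)$ is a quotient of $\ZZ^2$ and $\rho(\Stab(c))$ is abelian. If $\Fix_\rho(c)$ were empty, then \reflem{NonAnchorable} (applicable since $M$ is oriented) would give $\rho(\Stab(c)) \isom K_4$, contradicting the fact that $\Gamma = \Gamma_S$ is torsion-free. So $\Fix_\rho(c)$ is non-empty for every $c \in \Delta_M$, i.e.\ $\rho$ is anchorable. To upgrade to infinite anchorability, pick any $c \in \Delta_M$ (one exists, as $\bdy M \neq \emptyset$) and any $z \in \Fix_\rho(c)$. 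If the orbit $\Gamma \cdot z$ were finite, the stabiliser of $z$ in $\Gamma = \Gamma_S$ would have finite index; but point-stabilisers in $\PSL(2,\CC)$ are elementary, so $\Gamma_S$ would be virtually elementary and hence elementary — impossible for a lattice. Thus $\Gamma \cdot z$ is infinite and $\rho$ is infinitely anchorable at $c$.

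Finally, apply \refcor{RepTriangulation} to $M$ and $\rho$ (the hypotheses are met: $M$ is compact, connected, oriented, with non-empty boundary, and $\rho$ is infinitely anchorable). It yields an ideal triangulation $\calT$ of $M$ for which Thurston's gluing equations admit a solution $Z$ with $\rho_Z$ conjugate to $\rho$, which is exactly the desired conclusion.

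I expect the one point requiring genuine care to be the verification that $\rho$ is anchorable at the \emph{filled} boundary components — equivalently, that no peripheral image $\rho(\Stab(c))$ is a Klein four-group — which is precisely where torsion-freeness of the discrete and faithful image $\Gamma_S$ enters; the surjectivity of $\iota_*$ and the infinite-orbit argument are routine. One should also keep in mind that $\rho$, like $\rho_S$, is only well defined up to conjugacy (Mostow--Prasad rigidity), so the statement ``$\rho_Z$ is conjugate to $\rho$'' is the right formulation.
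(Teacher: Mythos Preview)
Your proposal is correct and follows essentially the same approach as the paper: verify that $\rho$ is infinitely anchorable (anchorability via torsion-freeness of $\Gamma_S$, infinite orbits because $\Gamma_S$ is a non-elementary lattice) and then apply \refcor{RepTriangulation}. The only cosmetic difference is that the paper classifies $\rho(\Stab(c))$ directly as $\ZZ^2$, $\ZZ$, or trivial, whereas you invoke \reflem{NonAnchorable} to rule out $K_4$; the content is the same.
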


\begin{proof}
Suppose that $T$ is a boundary component of $M$.
Thus $T$ is a torus.
Let $c \in \Delta_M$ be an elevation of $T$.
Then $\Stab(c)$ is conjugate to $\pi_1(T)$.
Thus $\rho(\Stab(c))$ is isomorphic to $\ZZ^2$, $\ZZ$, or the trivial group. 
(No torsion can appear because $M(S)$ is a manifold rather than an orbifold.)
Therefore $\Fix_\rho(c)$ is either a point, a pair of points, or all of $\bdy_\infty \HH^3$.
Thus $\rho$ is anchorable.
Let $\Gamma = \rho(\pi_1(M))$.
Since $M(S)$ is hyperbolic, for any $z_0 \in \bdy_\infty \HH^3$, the orbit $\Gamma \cdot z_0$ is infinite.
Thus $\rho$ is infinitely anchorable.
Therefore we may apply \refcor{RepTriangulation}.
\end{proof}

In \refapp{HMP} we refine this result to provide a triangulation needed for an application to the 1-loop invariant due to Pandey and Wong~\cite{PandeyWong24}.

\subsection{Building triangulations avoiding certain edges}

One simple possibility for the set of labels is $\pi_1(M)$ itself.
In this case equivariance requires that $\bdy M$ is a single two-sphere.
The next most obvious set of labels comes from the coset space $\pi_1(M) / G$ where $G$ is a normal subgroup.

\begin{corollary}
\label{Cor:SubgroupAvoiding}
Suppose that $M$ is a closed, connected three-manifold. 
Suppose that $p$ is a point in $M$. 
Suppose that $G$ is an infinite index and normal subgroup of $\pi_1(M, p)$. 
Then there exists a one-vertex material triangulation $\calT$ of $M$ with vertex at $p$ so that no edge of $\calT$ lies in $G$.
\end{corollary}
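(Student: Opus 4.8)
The plan is to deduce this from \refthm{ExistsTriangulation}, applied to the coset-space labelling associated to $G$, after passing from the closed manifold $M$ to a ball complement so that the theorem becomes available. First I would set $M^\circ = M - \calN(p)$ for a small open ball $\calN(p)$ around $p$; then $M^\circ$ is compact and connected, its boundary is a single two-sphere $S$, and $\pi_1(M^\circ) = \pi_1(M, p)$. As in \refrem{MaterialEssential}, one-vertex material triangulations of $M$ with vertex at $p$ correspond to ideal triangulations of $M^\circ$: the vertex at $p$ becomes the ideal vertex, which is unique since $S$ is connected, and each edge of the material triangulation becomes a properly embedded edge of the ideal triangulation. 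An edge $e$ of the material triangulation is a loop based at $p$, hence (once oriented) represents an element of $\pi_1(M, p)$; because $G$ is normal, and a subgroup, the condition ``$e$ lies in $G$'' is insensitive both to the orientation of $e$ and to conjugation, so it is well posed.

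Next I would take $\calL = \pi_1(M, p)/G$ with the action of $\pi_1(M, p)$ by left translation. This is exactly the labelling $L$ of \refexa{CovEssential}, for the regular cover of $M^\circ$ with fundamental group $G$; I pass to $M^\circ$ rather than $M$ precisely because \refexa{CovEssential} and \refthm{ExistsTriangulation} require nonempty boundary. Since $G$ has infinite index, the image of $L$ is infinite, so \refthm{ExistsTriangulation} produces an $L$--essential ideal triangulation $\calT^\circ$ of $M^\circ$. Refilling $\calN(p)$ converts $\calT^\circ$ into a one-vertex material triangulation $\calT$ of $M$ with vertex at $p$, and it remains only to see that $L$--essentiality of $\calT^\circ$ says exactly that no edge of $\calT$ lies in $G$.

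To check this, fix a basepoint elevation of $S$ in $\cover{M^\circ}$. This identifies $\Delta_{M^\circ}$ with $\pi_1(M, p)$ as $\pi_1(M, p)$--sets (the basepoint elevation going to $1$), and, under the correspondence of the first paragraph, simultaneously identifies the elevations of the ideal vertex of $\cover{\calT^\circ}$ with the elements of $\pi_1(M, p)$. Given an oriented edge $e$ of $\calT$, lift it to the path in $\cover{M^\circ}$ issuing from the basepoint elevation; by standard covering-space theory its terminal elevation corresponds to $[e] \in \pi_1(M, p)$. Thus the two elevations of the ideal vertex at the ends of this lift carry the labels $1 \cdot G$ and $[e] \cdot G$, so by \refdef{LEssential} the edge $e$ is $L$--inessential if and only if $[e] \in G$. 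Since $\calT^\circ$ is $L$--essential, no edge of $\calT$ lies in $G$, as required.

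The only step that warrants care, rather than routine bookkeeping, is the covering-space dictionary in the last paragraph: identifying the deck transformation that carries one end of a chosen lift of an edge of $\calT^\circ$ to the other end with the fundamental-group element represented by the corresponding loop of $\calT$ --- up to conjugacy, which suffices because $G$ is normal. The remaining ingredients --- uniqueness of the ideal vertex of $\calT^\circ$, the $\pi_1(M, p)$--equivariant identification of $\Delta_{M^\circ}$, and infinitude of the image of $L$ --- are immediate.
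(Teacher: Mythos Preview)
Your proof is correct and follows essentially the same approach as the paper: remove a ball around $p$, apply \refthm{ExistsTriangulation} with the coset-space labelling coming from the regular cover associated to $G$ (as in \refexa{CovEssential}), and then translate $L$--essentiality back into the statement about edges avoiding $G$. The only difference is presentational: you work directly with the coset space $\pi_1(M,p)/G$ and spell out the covering-space dictionary explicitly, whereas the paper phrases the labelling via the boundary components of the intermediate cover $N_G$ and finishes with a short contrapositive.
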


\begin{proof}
As in \refrem{MaterialEssential}, suppose that $B$ is a small three-ball neighbourhood of $p$ in $M$.
Let $N = M - \interior(B)$.
Note that $\pi_1(N) \isom \pi_1(M)$.
Let $N_G$ be the cover of $N$ corresponding to $G$. 
Let $L$ be the labelling of $\Delta_N$ induced by the covering map from $\cover{N}$ to $N_G$, as in \refexa{CovEssential}. 
This labelling $L$ has infinite image because $G$ has infinite index. 

Apply \refthm{ExistsTriangulation} to obtain an $L$-essential ideal triangulation $\calT_N$ of $N$. 
Let $\calT$ denote the induced one-vertex triangulation of $M$. 
Suppose, to prove the contrapositive, that an edge of $\calT$ lies in $G$.
Then the corresponding edge in $\calT_N$ lifts to an edge in $N_G$ with both endpoints on the same boundary component of $N_G$. 
This contradicts the $L$-essentiality of $\calT_N$.
\end{proof}

Applying \refcor{SubgroupAvoiding} with $G$ equal to the commutator subgroup of $\pi_1(M)$ gives the following.

\begin{corollary}
\label{Cor:NullHomologous}
Suppose that $M$ is a closed connected three-manifold with infinite $H_1(M)$. 
Then there exists a one-vertex triangulation of $M$ none of whose edges are null-homologous. \qed
\end{corollary}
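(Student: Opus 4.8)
The plan is to invoke \refcor{SubgroupAvoiding} with $G$ equal to the commutator subgroup $[\pi_1(M, p), \pi_1(M, p)]$, where $p$ is any point of $M$. This subgroup is automatically normal, so the only hypothesis of \refcor{SubgroupAvoiding} that requires checking is that $G$ has infinite index. For this, recall that by the Hurewicz theorem the quotient $\pi_1(M, p)/G$ is naturally isomorphic to $H_1(M; \ZZ)$, which is infinite by assumption. Hence $G$ is an infinite-index normal subgroup, and \refcor{SubgroupAvoiding} produces a one-vertex material triangulation $\calT$ of $M$, with vertex at $p$, no edge of which lies in $G$.

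It then remains only to translate the conclusion ``no edge lies in $G$'' into ``no edge is null-homologous''. Since $\calT$ has a single vertex, every oriented edge $e$ of $\calT$ is a loop based at $p$ and so determines a class $[e] \in \pi_1(M, p)$, well-defined up to conjugacy. The edge $e$ is null-homologous precisely when the image of $[e]$ in $H_1(M; \ZZ) = \pi_1(M, p)/G$ vanishes, that is, precisely when $[e] \in G$. Thus the triangulation $\calT$ produced above has no null-homologous edge, as desired.

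There is essentially no obstacle here: the content is entirely in \refcor{SubgroupAvoiding}, and the present statement is the special case obtained by choosing the labelling coming from the abelianisation. The one point to be careful about is that a loop in $\calT$ based at $p$ is only defined up to free homotopy, hence up to conjugacy in $\pi_1(M, p)$; but since $G$ is normal, membership in $G$ is conjugacy-invariant, so the identification above is unambiguous and the argument goes through verbatim.
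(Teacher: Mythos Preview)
Your proof is correct and follows exactly the approach taken in the paper: apply \refcor{SubgroupAvoiding} with $G$ the commutator subgroup of $\pi_1(M, p)$. The paper states this in a single sentence, while you have (correctly) spelled out why $G$ has infinite index and why ``no edge lies in $G$'' is equivalent to ``no edge is null-homologous''.
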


\begin{corollary}
\label{Cor:Seifert}
Suppose that $M$ is a closed connected orientable Seifert fibred space whose fundamental group is not virtually cyclic.
Then there exists a one-vertex triangulation of $M$ none of whose edges are homotopic to a regular fibre of $M$.
\end{corollary}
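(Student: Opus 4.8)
The plan is to deduce \refcor{Seifert} from \refcor{SubgroupAvoiding} in the same way that \refcor{NullHomologous} was, but taking for $G$ the fibre subgroup of $\pi_1(M)$ rather than the commutator subgroup.

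First I would set up the algebra. Fix a regular fibre of $M$, pick a basepoint $p$ on it, and let $h \in \pi_1(M, p)$ be the class of that fibre; put $G = \subgp{h}$. It is standard in the theory of Seifert fibred spaces that $G$ is a normal cyclic subgroup of $\pi_1(M, p)$, realised as the kernel of the surjection $\pi_1(M, p) \to \pi_1^{\orb}(B)$ onto the fundamental group of the base orbifold $B$ (when $h$ has infinite order and $B$ is orientable, $G$ is in fact central; in general one only has $\gamma h \gamma^{-1} = h^{\pm 1}$, but normality always holds).

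Next I would check that $G$ has infinite index, which is where the hypothesis enters. If $[\pi_1(M) : G]$ were finite, then $\pi_1(M)$ would contain a finite-index cyclic subgroup, that is, would be virtually cyclic---contrary to assumption. Hence $\pi_1(M)/G \isom \pi_1^{\orb}(B)$ is infinite and $G$ is a normal subgroup of infinite index. Now \refcor{SubgroupAvoiding}, applied with this $G$ and basepoint $p$, produces a one-vertex (material) triangulation $\calT$ of $M$ none of whose edges lies in $G$.

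Finally I would translate this back. A regular fibre represents the conjugacy class of $h^{\pm 1}$, so if an edge $e$ of $\calT$ were freely homotopic to a regular fibre, then after choosing a path from $p$ into $e$ the element of $\pi_1(M, p)$ determined by $e$ would be conjugate to $h$ or to $h^{-1}$; since $G$ is normal and contains both, this forces $e \in G$, contradicting the defining property of $\calT$. Therefore no edge of $\calT$ is homotopic to a regular fibre, as required. The only genuine content beyond invoking \refcor{SubgroupAvoiding} is this input check---identifying the fibre subgroup and observing that ``not virtually cyclic'' is precisely what makes it have infinite index---and I expect that (rather than any geometric difficulty) to be the one point needing care; it is routine once the relevant structure theory of Seifert fibred spaces is cited.
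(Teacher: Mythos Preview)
Your proposal is correct and follows essentially the same approach as the paper: take $G$ to be the cyclic subgroup generated by a regular fibre, observe it is normal and (by the ``not virtually cyclic'' hypothesis) of infinite index, and apply \refcor{SubgroupAvoiding}. The paper's proof is terser, omitting your final translation paragraph, but the argument is the same.
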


\begin{proof}
Let $G$ be the subgroup of $\pi_1(M)$ generated by a regular fibre. 
Thus $G$ is normal.
Since $\pi_1(M)$ is not virtually cyclic, the index of $G$ in $\pi_1(M)$ is infinite.
The result follows from \refcor{SubgroupAvoiding}.
\end{proof}

Burton and He~\cite[Section~5.3]{BurtonHe23} give a concrete example of an $11$-tetrahedra, one-vertex triangulation of a Seifert fibred space with no Seifert fibre edges.
They find this via a targeted search through the graph of one-vertex triangulations.

\subsection{Snake paths}
\label{Sec:SnakePaths}

The main tool that we will use to prove \refthm{ExistsTriangulation} is a \emph{snake}.
We describe this in the following sections.
Suppose that $M$ is a three-manifold.
Suppose that $\calT$ is a triangulation of $M$.
We take $\calF$ to be the dual foam.

\begin{definition}
\label{Def:SnakePath}
Suppose that $\gamma$ is a non-trivial oriented path embedded in $\calF$.
Suppose that $\gamma$ is transverse to the one-skeleton $\calF^{(1)}$ except at its endpoints which lie in the interior of edges of $\calF^{(1)}$.
See \reffig{InflateSnakePath1}.
Suppose that $c$ and $d$ are the initial and terminal endpoints of $\gamma$.
Three regions of $M - \calF$ meet $c$.
Exactly one of these, say $C$, is locally disjoint from the interior of $\gamma$.
We call $C$ the \emph{source} of $\gamma$.
We define the \emph{target} $D$ of $\gamma$ similarly, using $d$.
We require that $C$ is peripheral and that $D$ is material.
When all these properties hold we call $\gamma$ a \emph{snake path}.
\end{definition}

\begin{definition}
\label{Def:InflatingSnakePath}
Suppose that $\calF$ is a foam.
Suppose that $\gamma$ is a snake path in $\calF$.
We obtain a new \emph{inflated} foam $\calF(\gamma)$ by \emph{inflating} along the snake path as follows.
Let $\eta = \eta(\gamma)$ be a sufficiently small regular neighbourhood, chosen so that $\eta$ does not contain any vertices of $\calF$ and has a product structure away from the endpoints of $\gamma$.
Let $B$ be the collection of bigons of $\bdy \eta - \calF$.
(Note that each bigon arises from the initial or terminal point of $\gamma$.)
We set $\calF(\gamma) = (\calF - \eta) \cup (\bdy \eta - B)$.
See \reffig{InflateSnakePath}.
We call $\eta$ a \emph{snake}.
We extend the notions of ancestor and descendant (given in \refdef{Ancestor}) to inflating along snake paths.
\end{definition}

\begin{figure}[htbp]
\subfloat[$\gamma$]{
\includegraphics[width = 0.45\textwidth]{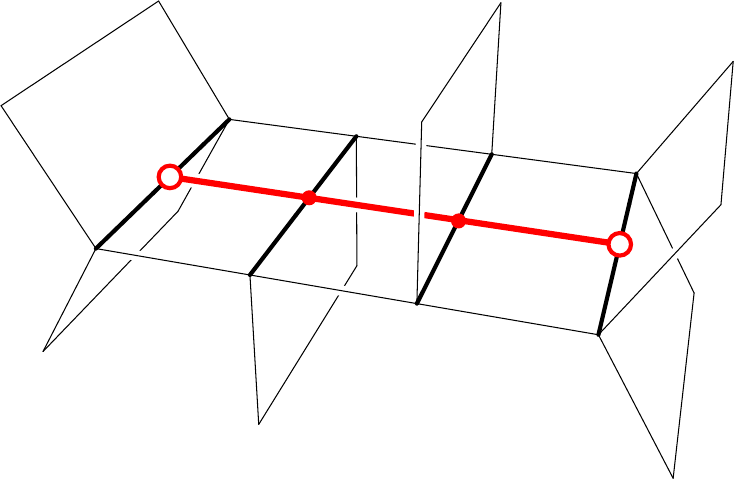}
\label{Fig:InflateSnakePath1}
}
\quad
\subfloat[$N(\gamma)$]{
\includegraphics[width = 0.45\textwidth]{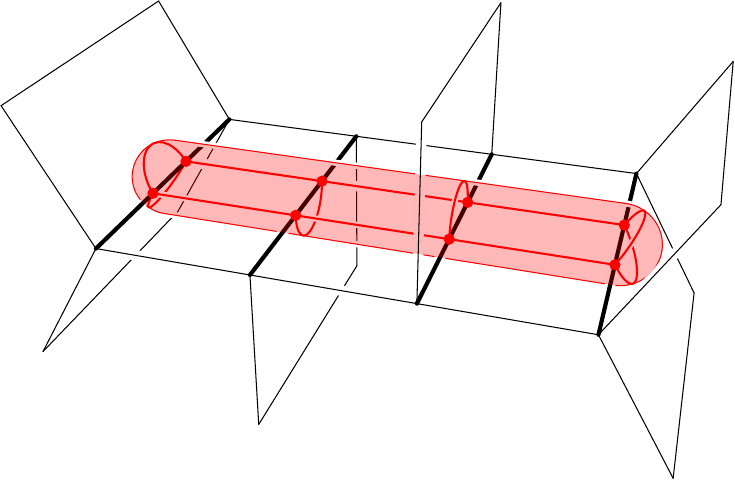}
\label{Fig:InflateSnakePath2}
}

\subfloat[$\calF(\gamma)$]{
\includegraphics[width = 0.45\textwidth]{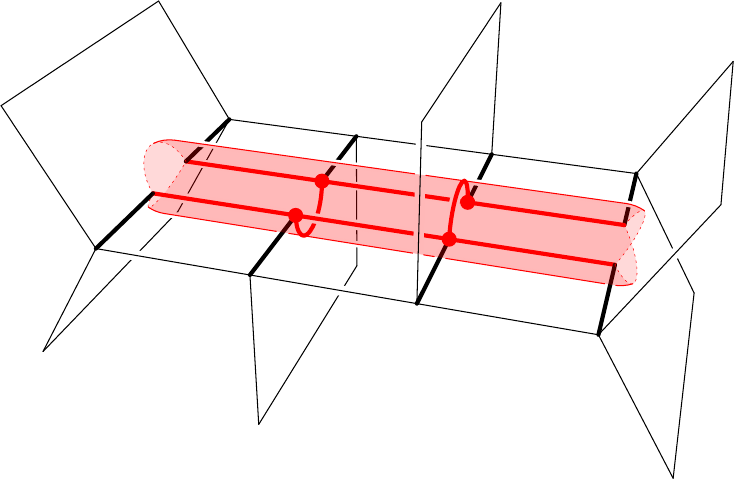}
\label{Fig:InflateSnakePath3}
}
\quad
\subfloat[$\calF(\gamma)$ may not be a foam.]{
\labellist
\hair 2pt \small
\pinlabel $f$ at 60 173
\pinlabel $f$ at 330 150
\endlabellist
\includegraphics[width = 0.45\textwidth]{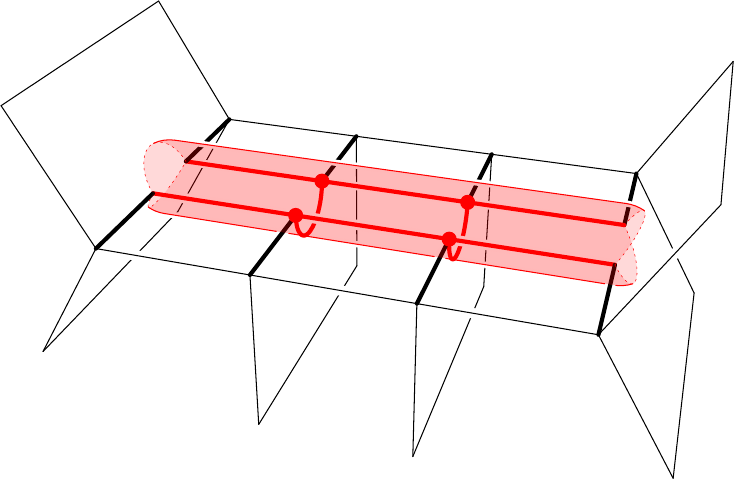}
\label{Fig:InflateSnakePathBad}
}
\caption{Inflating along a snake path $\gamma$. The result is shown in \reffig{InflateSnakePath3}.}
\label{Fig:InflateSnakePath}
\end{figure}

\begin{lemma}
Suppose that $M$ is a manifold with a foam $\calF$.
Suppose that $\gamma$ is a snake path in $\calF$.
Then the inflation $\calF(\gamma)$ is a foam in $M$.
\end{lemma}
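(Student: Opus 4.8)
The plan is to verify the defining conditions of \refdef{Foam} for $\calF(\gamma)$. Outside the snake $\eta = \eta(\gamma)$ the complex $\calF(\gamma)$ coincides with the open subcomplex $\calF - \eta$ of $\calF$, so the local conditions (on attaching maps, vertex links, and edge--face incidences) hold there automatically; all the work happens in a neighbourhood of $\closure{\eta}$. Since $\gamma$ is embedded, misses the zero-skeleton of $\calF$, is transverse to $\calF^{(1)}$, and has its endpoints $c, d$ in the interiors of edges of $\calF$, I would cut $\gamma$ into finitely many sub-arcs of three kinds: the two terminal sub-arcs containing $c$ and $d$; the sub-arcs meeting $\calF^{(1)}$ transversally in one point; and the remaining sub-arcs, each lying in the interior of a face. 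After shrinking $\eta$ so that the portions of $\calF \cap \eta$ over distinct sub-arcs are disjoint, it suffices to describe $\calF \cap \eta$, the bigon set $B$, and the replacement $\bdy\eta - B$ over each sub-arc, and to check that the cells match up along $\calF \cap \bdy\eta$.

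Over a face-interior sub-arc, $\calF \cap \eta$ is a single rectangular strip, which is removed; $\bdy\eta - B$ contributes the two long sides (``rails'') of that strip together with the two halves of the cylinder $\bdy\eta$ that they bound. No zero-cells are created, and each rail is adjacent to three face-segments --- the two cylinder halves and the truncated ambient face --- so the local conditions persist. Over a sub-arc crossing an edge $e_0$, the set $\calF \cap \eta$ is a disk --- obtained by gluing the strip in one of the two faces through which $\gamma$ passes to the strip in the other along a sub-arc $\sigma$ of $e_0$ --- together with a ``flap'' of the third face incident to $e_0$, attached along $\sigma$; in $\calF(\gamma)$ the arc $\sigma$ vanishes and its two endpoints $\sigma \cap \bdy\eta$ become new zero-cells. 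Here I would carry out the main local computation: each such new zero-cell is adjacent to exactly four one-cell ends --- two rails of the strip, the free boundary arc of the flap, and the continuation of $e_0$ into $\calF - \eta$ --- and, tracing the six surrounding two-cell corners, its link is the one-skeleton of a tetrahedron; the flap's free arc, where the truncated third face meets $\bdy\eta$, borders three face-segments, and local injectivity of attaching maps is inherited from the embeddedness of $\eta$. Finally, over the terminal sub-arc at $c$, the set $\calF \cap \eta$ is three half-disks meeting along a sub-arc of the edge through $c$; the cap of $\bdy\eta$ is divided into three regions, exactly one of which is a bigon --- the one facing the source region $C$ --- and this is the bigon that is deleted (and likewise at $d$ with the target region $D$). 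One checks its two corners are again valence-four zero-cells with tetrahedral link, completing the verification of the local conditions throughout $\calF(\gamma)$.

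It remains to verify \refitm{Complement}. As $\calF(\gamma)$ is disjoint from $\interior(\eta)$, this open ball lies in a single complementary region $R$ of $\calF(\gamma)$; deleting the bigon at $c$ attaches $\interior(\eta)$ to the source region $C$, deleting the bigon at $d$ attaches it to the target region $D$, and by \refdef{InflatingSnakePath} there are no further bigons in $B$. Every other complementary region $R'$ of $\calF$ either avoids $\eta$ or meets it in a finite union of half-open collars (one per sub-arc of $\gamma$ running alongside a face on $\bdy R'$); in either case $R' - \eta \homeo R'$ remains an open ball or a product, and these account for all complementary regions of $\calF(\gamma)$ other than $R$. For $R$ itself we finally invoke the hypotheses that $C$ is peripheral and $D$ is material: $C \homeo C_0 \times [0,1)$ for a boundary component $C_0$ of $M$, and $D$ is an open ball, so attaching the ball $\closure{\eta}$ to $D$ along a disk produces a ball, and attaching that ball to $C$ along a disk in its frontier leaves a region still homeomorphic to $C_0 \times [0,1)$. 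Hence $R$ is peripheral, \refitm{Complement} holds, and $\calF(\gamma)$ is a foam. (This is exactly the mechanism by which a snake absorbs a material region into a peripheral one.)

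The hardest part is the edge-crossing local model of the second paragraph: confirming that the two new zero-cells genuinely have four incident one-cells with tetrahedral link, and that no re-routed face acquires a non-locally-injective attaching map or a one-cell bordering the wrong number of two-cells --- in other words, that $\calF(\gamma)$ does not degenerate as in \reffig{InflateSnakePathBad}. The transversality of $\gamma$ and the placement of its endpoints in edge interiors are precisely what rule this out. By contrast, the hypotheses that $C$ is peripheral and $D$ material play no role in the local conditions; they enter only in \refitm{Complement}, ensuring that the merged region $R$ is a single product region rather than, say, a fusion of two distinct peripheral regions with the wrong boundary.
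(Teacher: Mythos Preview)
Your local sub-arc analysis is careful, and your treatment of \refitm{Complement} is essentially correct. But there is a genuine gap: you never verify that the two-cells of $\calF(\gamma)$ are disks. Being a CW complex is part of \refdef{Foam}, and whether a putative two-cell is a disk is a \emph{global} question that a sub-arc-by-sub-arc local check cannot see. A face of $\calF(\gamma)$ that ``extends along the snake'' may pass through many of your sub-arcs; nothing in your argument prevents such a face from closing up into an annulus.

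This is exactly what \reffig{InflateSnakePathBad} depicts, and you have mis-diagnosed it. That figure is not a failure of transversality or of endpoint placement --- those hypotheses do guarantee that your local models are as you describe, but they do not rule out a global annular degeneration. The paper's argument is that an annular face $f$ can arise only when the snake path $\gamma$ lies entirely in the boundary of a single region $E$; examining the two ends of the snake then forces $E = C$ and $E = D$, hence $C = D$. Since $C$ is peripheral and $D$ is material, this is impossible. Thus the hypothesis that $C$ is peripheral and $D$ is material is used precisely here, to exclude non-disk faces and secure the CW-complex structure --- contrary to your closing assertion that these hypotheses ``play no role in the local conditions'' and ``enter only in \refitm{Complement}.''
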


\begin{proof}
Let $C$ and $D$ be the source and target regions for $\gamma$.
Since the inflation move occurs inside of a three-ball, all local properties of a foam are maintained.
Thus it suffices to check two properties.
First, \refdef{Foam}\refitm{Complement} holds because it holds for $\calF$ and the only change is that $C$ and $D$ are now combined into a single region.
Second, we check that $\calF(\gamma)$ is a CW complex:
that is, we check that each component of the $(k+1)$--skeleton minus the $k$--skeleton is a cell.

Faces of $\calF(\gamma)$ are either unchanged from $\calF$, or they have small parts removed, or they extend along the snake, or they are new faces running along the snake.
The only possible non-disk face $f$ would be an annulus of the form shown in \reffig{InflateSnakePathBad}.
There the snake path $\gamma$ lies entirely within the boundary of a region $E$ say.
Consulting the figure, we see that $C$ and $E$ are on opposite sides of $f$, and also that $D$ and $E$ are on opposite sides of $f$.
We deduce that $E$ equals $D$ and also that $E$ equals $C$.
However, $C$ is a peripheral region and $D$ is a material region.
Thus we reach a contradiction.

By \refrem{FoamConnected}, we have that $\calF$ is connected. 
Since inflation occurs in a small neighbourhood of an embedded arc, we deduce that $\calF(\gamma)$ is connected.
Since all faces are disks, we have that the one-skeleton of $\calF(\gamma)$ is connected.
If some component $\delta$ of the one-skeleton of $\calF(\gamma)$ is a circle without vertices then $\calF(\gamma)$ has no vertices.
Therefore $\calF$ had no vertices.
But $\calF$ is a foam and therefore a CW-complex, so it has vertices.
Thus we reach a contradiction.
\end{proof}

\subsection{Labels and snake paths}

In this section we discuss how snake paths, snakes, and labels interact.

\begin{definition}
\label{Def:LabelledSnakePath}
Suppose that $L \from \Delta_M \to \calL$ is a labelling.
Suppose that $\gamma$ is a snake path in $\calF$.
In \refdef{FoamLabellingEssential} we extended $L$ to be defined on peripheral regions of $\cover{M} - \cover\calF$.
Given $\gamma$, we further extend $L$ to a labelling scheme $L_\gamma$ as follows.

Let $\cover{\gamma}$ be any lift of $\gamma$.
Let $C$ be the source and $D$ be the target of $\cover{\gamma}$.
Set $L_\gamma(\cover{\gamma}) = L_\gamma(D) = L(C)$.
When $\gamma$ is clear from context we drop the subscript.
\end{definition}

In the classic video game~\cite[page~101]{Goggin10} a snake may never touch its own tail. 
We require a similar constraint on snake paths: a snake path $\cover{\gamma}$ cannot touch itself, its source, its target, or $\Stab(L(\cover{\gamma}))$--translates of these.

\begin{definition}
\label{Def:LSelfAvoiding}
Suppose that $L \from \Delta_M \to \calL$ is a labelling.
Suppose that $\gamma$ is a snake path in $\calF$.
Suppose that $\cover\gamma$ is a lift of $\gamma$ with source $C$ and target $D$.
Suppose that the label $L(\cover\gamma)$ is distinct from the labels of all 
regions meeting either $D$ or the interior of $\cover{\gamma}$.
Suppose that for each $\tau \in \Stab(L(\cover{\gamma}))$, there is no face with $D$ on one side and $\tau(D)$ on the other.
Then we say that $\gamma$ is \emph{$L$--self-avoiding}.
\end{definition}

The above is well-defined because $L$ is equivariant.

\begin{remark}
\label{Rem:StronglyLSelfAvoiding}
The condition in \refdef{LSelfAvoiding} involving $\Stab(L(\cover{\gamma}))$ is automatically satisfied if $\calF$ is $L$--essential (rather than only weakly $L$--essential).
\end{remark}

\begin{lemma}
\label{Lem:EssentialSnake}
Suppose that $\calF$ is a (weakly) $L$--essential foam.
Suppose that $\gamma$ is an $L$--self-avoiding snake path in $\calF$.
Then $\calF(\gamma)$ is (weakly) $L$--essential. 
\end{lemma}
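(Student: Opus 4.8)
The plan is to trace through the inflation move face by face and check that no $L$--inessential face is created. By \refdef{FoamLabellingEssential}, it suffices to show that in the universal cover $\cover{M}$, no face of $\cover{\calF(\gamma)}$ has regions with equal labels on its two sides. Faces of $\calF(\gamma)$ come in four types (as noted in the previous proof): faces inherited unchanged from $\calF$, faces obtained from a face of $\calF$ with a small piece removed, faces obtained by extending a face of $\calF$ along the snake, and genuinely new faces that run along the snake (the ``sides'' of the tube $\bdy\eta$, together with the two new faces capping the ends where the bigons were deleted). For the first three types, the regions abutting the face are (descendants of) the same regions that abutted the corresponding face of $\calF$ — except that $C$ and $D$ have been merged into a single region, which we call $D$ and to which we assign the label $L(\cover\gamma) = L(C) = L_\gamma(D)$ as in \refdef{LabelledSnakePath}. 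So the only way such a face could become $L$--inessential is if it previously had $C$ on one side and $D$ on the other, or $C$ (resp.\ $D$) on one side and a region with label $L(C)$ on the other; the first is impossible since $C$ is peripheral and $D$ is material (they had different ``essentiality status'' but more to the point a face of $\calF$ between a peripheral and a material region is $L$--essential precisely because... — actually one must be a bit careful here: a face between $C$ peripheral and $D$ material is automatically essential in the weak sense, and we are told $\calF$ is weakly $L$--essential), and the second is exactly ruled out by $L$--self-avoidance, which says $L(\cover\gamma)$ differs from the label of every region meeting $D$ or the interior of $\cover\gamma$.

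**Next I would** handle the new faces running along the snake. These appear in a lift $\cover{\calF(\gamma)}$ as follows: inflating $\cover\gamma$ in $\cover{M}$ produces faces whose two sides are the merged region $D$ (with label $L(\cover\gamma)$) and one of the regions meeting the interior of $\cover\gamma$ or meeting $D$ near the source/target. That these have distinct labels is again precisely the first clause of \refdef{LSelfAvoiding}. However — and **this is where the stabiliser condition enters** — when we pass to the universal cover, the region we inflated along, $D$, may be a $\Stab(L(\cover\gamma))$--translate situation: a new face could have $D$ on one side and $\tau(D)$ on the other side for some $\tau \in \Stab(L(\cover\gamma))$, because $\tau$ fixes the label and hence the ``new'' merged label is shared. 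This is exactly the scenario forbidden by the second clause of \refdef{LSelfAvoiding}. So the new faces are $L$--essential.

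**Then** I must also verify, in the non-weak case, that $\calF(\gamma)$ has no edge loops at material vertices — equivalently, no face of $\calF(\gamma)$ with the same material region on both sides (\refrem{NoLabelTouch}). Since $\calF$ is $L$--essential it has no such faces; the inflation introduces new material-region adjacencies only between $D$ and regions it already touched or between $D$ and $\tau(D)$, and both are excluded (the former because $D$ met no region of its own label, in particular not itself; the latter by the stabiliser clause). In the weak case there is nothing further to check. A clean way to organise all of this is to phrase the statement in the universal cover once and for all: fix a lift $\cover\gamma$, note that $L$--equivariance lets us reduce every claimed adjacency to either a $\Stab(L(\cover\gamma))$--orbit of the $\cover\gamma$ situation or to an honest adjacency already present in $\cover\calF$, and then cite the two clauses of $L$--self-avoidance plus (weak) $L$--essentiality of $\calF$.

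**The main obstacle** I anticipate is bookkeeping the descendant relation carefully enough to be sure that ``a face of $\calF(\gamma)$'' really does correspond to ``a face of $\calF$ (possibly merged/extended)'' with the regions on its sides being descendants of the expected regions — in particular making sure that the capping faces at the two endpoints of $\gamma$ (where the bigons of $\bdy\eta - \calF$ were removed) are correctly identified, since those are the faces most likely to introduce an unexpected adjacency between $D$ and a region near the source $C$. Once the combinatorial picture near the two endpoints is pinned down (it is exactly \reffig{InflateSnakePath}, and the bad annulus case of the previous lemma has already been excluded), the verification is a finite check against \refdef{LSelfAvoiding}.
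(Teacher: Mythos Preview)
Your proposal is correct and follows essentially the same case analysis as the paper: split the faces of $\calF(\gamma)$ according to whether they have an ancestor in $\calF$, and check each case against the two clauses of \refdef{LSelfAvoiding}. The paper's binary split (ancestor vs.\ no ancestor, with the ancestor case further divided by whether $f'$ lies on $\bdy\phi(D)$) is a slightly cleaner organisation than your four types, but the content is the same.

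One small correction worth making when you write this up: the stabiliser clause of \refdef{LSelfAvoiding} is a statement about faces of $\cover\calF$ (hence \emph{ancestor} faces), specifically those with $D$ on one side and $\tau(D)$ on the other. So it belongs in your first paragraph, covering the case you omitted there --- an ancestor face on $\bdy\phi(D)$ meeting $\phi(D)$ on both sides. For \emph{new} faces along the snake that happen to abut a translate $\tau(D)$, the paper instead invokes the first clause, read via the extended labelling $L_\gamma$ of \refdef{LabelledSnakePath} (which assigns $\tau(D)$ the label $\tau\cdot L(\cover\gamma)$). You do get this right in your third paragraph, so the argument is complete; just move the stabiliser invocation to the ancestor case.
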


\begin{proof}
Suppose that $\cover{\gamma}$ is a lift of $\gamma$ with source $C$ and target $D$.
By \refdef{LabelledSnakePath} we have that $L(C) = L(\cover{\gamma}) = L(D)$.
Suppose that $f$ is a face of $\calF(\gamma)$.
There are two cases as $f$ does or does not have an ancestor in $\calF$.

Recall that $\phi \from \cover{M} \to M$ is the covering map.
Suppose that $f$ has an ancestor $f'$ in $\calF$.
There are two subcases.
Either $f'$ lies on the boundary of $\phi(D)$ or it does not.
\begin{itemize}
\item Suppose that $f'$ lies on the boundary of $\phi(D)$.
There are two sub-subcases.
Either $f'$ meets $\phi(D)$ on only one side or on both sides.
Suppose first that $f'$ meets $\phi(D)$ on only one side.
Let $\cover{f}'$ be the unique lift of $f'$ contained in the boundary of $D$.
The two regions adjacent to $\cover{f}'$ are $D$ and $E$ say, which are distinct.
If $E$ is peripheral then, since $\gamma$ is $L$--self-avoiding, $L(E) \neq L(C)$.
If instead $E$ is material then, since $\phi(E) \neq \phi(D)$, the region $E$ has no label.
Thus $f$ is $L$--essential.

Now suppose that $f'$ meets $\phi(D)$ on both sides.
Let $\cover{f}'$ be a lift of $f'$ contained in the boundary of $D$.
The region on the other side of $\cover{f}'$ is a translate of $D$, say $\tau(D)$.
Since $\gamma$ is $L$--self-avoiding, we have that $\tau \notin \Stab(L(\cover{\gamma}))$.
Thus the regions either side of the corresponding face $\cover{f}$ have labels $L(\cover{\gamma})$ and $\tau(L(\cover{\gamma}))$, which are different.
Thus $f$ is $L$--essential.

\item Suppose that $f'$ does not lie on the boundary of $\phi(D)$.
Here the two regions adjacent to any lift of $f$ are the same as the two regions adjacent to the corresponding lift of $f'$. 
Thus $f$ is $L$--essential since $f'$ is.
\end{itemize}

Finally, suppose that $f$ does not have an ancestor in $\calF$.
Then $f$ is contained the boundary of the snake $\eta(\gamma)$.
Let $\cover{f}$ be the lift of $f$ contained in the boundary of $\eta(\cover{\gamma})$.

The region on one side of $\cover{f}$ is $\eta(\cover{\gamma})$ so has label $L(\cover{\gamma})$.
Since $\gamma$ is $L$--self-avoiding, the region on the other side of $\cover{f}$ does not have label $L(\cover{\gamma})$. 
(Note that the other side could be a translate of $D$. 
\refdef{LabelledSnakePath} gives this region a label as well, which by $L$--self avoidance is different from $L(\cover{\gamma})$.)
Thus, again, $f$ is $L$--essential.

Finally, note that inflation does not lead to any material regions coming into contact with each other, so if $\calF$ was $L$--essential then $\calF(\gamma)$ is also $L$--essential. 
\end{proof}

\subsection{Creating snake paths}

In a slight abuse of notation, if $\calA$ is a collection of subspaces of a space then we may write $\calA = \cup_{a \in \calA}{a}$.
We now describe how to produce snake paths.

\begin{algorithm}[Create snake path]
\label{Alg:CreateSnakePath} \,\\  
\noindent
\textbf{Input:} A manifold $M$ equipped with a foam $\calF$ and a labelling function $L \from \Delta_M \to \calL$ together with the following:
\begin{itemize}
\item A material region $D$ of $\cover{M} - \cover{\calF}$.
\item A label $\ell \in L(\Delta_M)$ so that
\begin{itemize}
\item $\ell$ is distinct from the labels of all regions incident to $D$ and
\item for each $\tau \in \Stab(\ell)$, there is no face of $\cover{\calF}$ with $D$ on one side and $\tau(D)$ on the other.
\end{itemize}
\item A possibly empty collection $\calA$ of either
    \begin{itemize}
    \item disjoint snake paths in $\calF$, or 
    \item faces and edges of $\calF$ such that 
        \begin{itemize}
        \item $\cover{\calF} - \phi^{-1}(\closure{\calA})$ is connected,  
        \item some edge of $\phi(D)$ not in $\calA$ is adjacent to a face not in $\calA$, and 
        \item there is a peripheral region $C'$ so that $L(C')=\ell$ and some edge of $\phi(C')$ not in $\calA$ is adjacent to a face not in $\calA$. 
        \end{itemize}
    \end{itemize}
\end{itemize}
\textbf{Output:} An $L$--self-avoiding snake path $\gamma$ in $\calF$ with the following properties.
\begin{itemize}
\item Some lift $\cover{\gamma}$ of $\gamma$ has source $C$ say, with $L(C) = \ell$, and target $D$.
\item If $\calA$ is non-empty then the snake path $\gamma$ is disjoint from $\calA$.
\end{itemize}
	
Let $e$ be an edge of $\cover{\calF}$ which is adjacent to a region with label $\ell$.
Let $d$ be an edge of $\cover{\calF}$ which is adjacent to $D$.
If $\calA$ contains edges or faces then we additionally assume that $\phi(e)$ and $\phi(d)$ are not in $\calA$ and are adjacent to faces not in $\calA$. 
In all cases we have that $\cover{\calF} - \phi^{-1}(\closure{\calA})$ is connected.
Thus we may choose an (oriented) path $\cover{\gamma}'$ starting on $e$, ending on $d$, transverse to ${\cover{\calF}}^{(1)}$, and whose interior lies in $\cover{\calF} - \phi^{-1}(\closure{\calA})$.
(Note that the endpoints of $\cover{\gamma}'$ may meet $\phi^{-1}(\closure{\calA})$.)
Let $\gamma' = \phi(\cover{\gamma}')$ be the image of $\cover{\gamma}'$ under the covering map.
We arrange matters so that $\gamma'$ is transverse to itself.

\begin{figure}[htbp]
\subfloat[]{
\labellist
\hair 2pt \small
\pinlabel $\delta$ [br] at 55 146
\pinlabel $\gamma'_{k,j}$ [tr] at 70 106
\pinlabel $\gamma'_{k,j+1}$ [b] at 165 160
\pinlabel $e$ [tl] at 170 100
\endlabellist
\includegraphics[width = 0.44\textwidth]{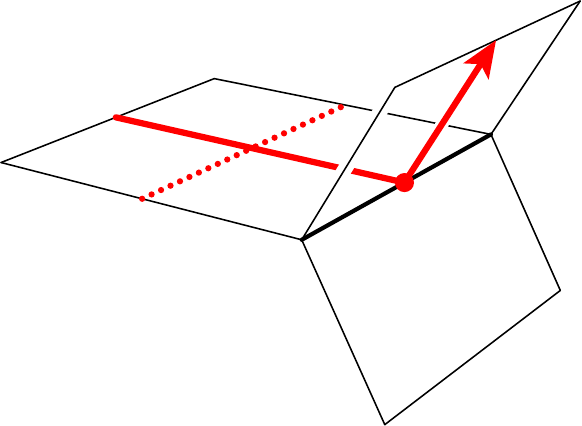}
\label{Fig:DodgeSnakePath1}
}
\quad
\subfloat[]{
\includegraphics[width = 0.44\textwidth]{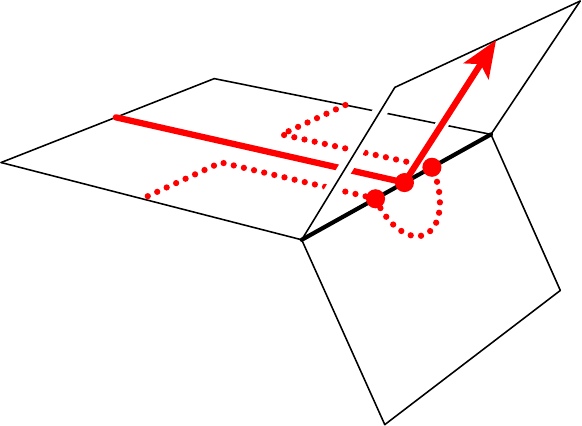}
\label{Fig:DodgeSnakePath2}
}
\caption{A \emph{detour} connects $\gamma'_{k,j}$ to $\gamma'_{k,j+1}$ while avoiding the cutting path $\delta$ as follows.  
We first remove a small regular neighbourhood of $\delta$ from each.
We then add an arc lying in the boundary of that regular neighbourhood.}
\label{Fig:DodgeSnakePath}
\end{figure}

We now construct a snake path $\gamma$ in $\calF$ which is homotopic (rel endpoints) to $\gamma'$.
We partition $\gamma'$ into segments $\{\gamma'_0, \ldots, \gamma'_N\}$ by cutting with the one-skeleton $\calF^{(1)}$.
We begin by taking $\gamma$ to be the empty path.
 Assume that we have processed $\gamma'_0, \ldots, \gamma'_{k-1}$.
 We then do the following.
\begin{enumerate}
\item 
\label{Itm:Min}
Isotope $\gamma'_k$ rel boundary in its face, remaining disjoint from $\closure{\calA}$, to meet $\gamma$ minimally. 
\item 
\label{Itm:Cut}
Partition $\gamma'_k$ into segments $\gamma'_{k,j}$ by cutting with $\gamma$.
\item Form $\gamma_k$ by connecting, for all $j$, the segment $\gamma'_{k,j}$ to $\gamma'_{k,j+1}$ using a detour around the cutting segment, $\delta$ say, as shown in \reffig{DodgeSnakePath}. 
The orientation on $\gamma$ induces an orientation on $\delta$.
We detour around the terminal point of $\delta$.
This ensures that the detour does not go around the initial point of $\gamma$, and therefore does not meet a  face of $\calA$.
(Note that the detour lies in a ball so no new self-intersections are introduced.)
\item Add $\gamma_k$ to the end of $\gamma$. 
\end{enumerate}
This completes the recursive part of the algorithm.
Let $\cover{\gamma}$ be the lift of $\gamma$ that starts at the initial point of $\cover{\gamma}'$ (and, by homotopy lifting, ends at the terminal point of $\cover{\gamma}'$).
Next, we truncate $\cover{\gamma}$ twice.
We remove a prefix of $\cover{\gamma}$ so that the remainder meets a region with label $\ell$ only at its initial point.
Thus the source for $\cover{\gamma}$, say $C$, has label $L(C) = \ell$.
Moreover, no peripheral region meeting the interior of $\cover{\gamma}$ has label $\ell$.
We remove a suffix of $\cover{\gamma}$ so that the remainder meets 
$\Stab(\ell) \cdot D$ only at its terminal point.
Finally, set $\gamma = \phi(\cover{\gamma})$ to be the image of $\cover{\gamma}$ under the covering map.
This completes \refalg{CreateSnakePath}. 
\end{algorithm}

The lift of $\gamma$ with target $D$ is a translate of $\cover{\gamma}$, but again its source has label $\ell$.
Moreover, no material region meeting the interior of $\cover{\gamma}$ has label $\ell$.
The label $\ell$ is distinct from all labels of peripheral regions meeting $D$ by hypothesis. 
Also by hypothesis, for each $\tau \in \Stab(\ell)$,  there is no face of $\cover{\calF}$ with $D$ on one side and $\tau(D)$ on the other.
Thus the resulting $\gamma$ is an $L$--self-avoiding snake path with the desired properties, proving the correctness of \refalg{CreateSnakePath}.

\begin{remark}
\label{Rem:StronglyCreateSnakePath}
The condition in \refalg{CreateSnakePath} involving $\Stab(\ell)$ is automatically satisfied if $\calF$ is $L$--essential (rather than only weakly $L$--essential).
\end{remark}

\begin{figure}[htbp]
\includegraphics[height = 5cm]{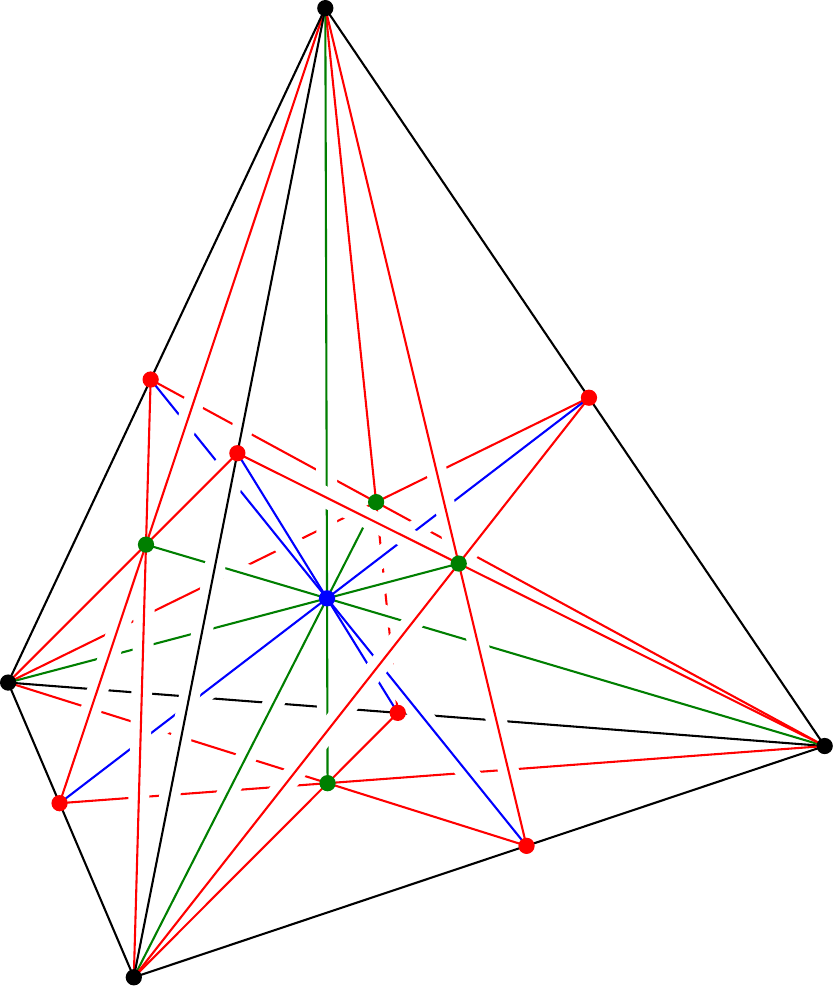}
\caption{Barycentric subdivision of a tetrahedron.}
\label{Fig:Barycentric}
\end{figure}

\begin{proof}[Proof of \refthm{ExistsTriangulation}]
Suppose that $\calS$ is an ideal triangulation of $M$.
Let $B\calS$ be the barycentric subdivision of $\calS$.
We claim that $B\calS$ is $L$--essential.
To see this, note that no pair of original vertices of $\calS$ share an edge in $B\calS$.
All new vertices are material so do not have labels, and there are no edge loops in $B\calS$.
See \reffig{Barycentric}.

We now produce a sequence of $L$--essential foams $\calF_0, \ldots, \calF_n$ where 
\begin{itemize}
\item $\calF_0$ is dual to $B\calS$, 
\item $\calF_{i+1}$ is related to $\calF_i$ by inflating a snake along an $L$--self-avoiding snake path $\gamma_i$, and 
\item $\calF_n$ has no material regions.
\end{itemize}

To produce the snake path $\gamma_i$ on $\calF_i$, we proceed as follows.
Choose a material region $D_i$ in $\cover{M} - \cover{\calF}_i$.
By hypothesis, $L(\Delta_M)$ is infinite. 
Therefore there is some label $\ell_i \in L(\Delta_M)$ that is distinct from the labels on all regions incident to $D_i$.
We can therefore apply \refalg{CreateSnakePath} with inputs $D_i$, $\ell_i$ (using \refrem{StronglyCreateSnakePath}), and $\calA = \emptyset$ to produce an $L$--self-avoiding snake path $\gamma_i$ that has a lift $\cover{\gamma}_i$ with $L(\cover{\gamma}_i) = \ell_i$ and target $D_i$.
By \reflem{EssentialSnake} we have that $\calF_{i+1} = \calF_i(\gamma_i)$ is $L$--essential.
The dual of $\calF_n$ is the desired $L$--essential triangulation.
\end{proof}

\section{Simplicial connectivity}
\label{Sec:SimplicialConnectivity}

The goal of this section is to use a result of Casali~\cite{Casali95} to prove \refcor{ConnectivitySimplicial}, which is a precursor to \refthm{ConnectivityWith0-2}.
Suppose that $M$ is a compact connected three-manifold with non-empty boundary. 
Suppose that $\calT$ is an $L$--essential ideal triangulation of $M$. 

\begin{definition}
Let $\sigma$ be an edge or a triangle of $\calT$. 
Suppose that each model tetrahedron in $\calT$ meets $\sigma$ at most once. 
Then the union of tetrahedra which contain $\sigma$ is a ball $B(\sigma)$, possibly with identifications along its boundary. 
We call this subcomplex the \emph{star} of $\sigma$.
\end{definition}

\begin{definition}
Suppose that $v$ is a vertex of $\calT$.
The \emph{link} of $v$ is the boundary of the star of $v$.
\end{definition}

\begin{definition}
A \emph{stellar move} along $\sigma$ retriangulates the interior of the star $B(\sigma)$ by replacing the existing triangulation with the following.
We choose a point in the interior of $B(\sigma)$ and cone the boundary $\bdy B(\sigma)$ to that point.
\end{definition}
 
A stellar move on $\calT$ can be realised by a sequence of bistellar moves (sometimes called Pachner moves). 
See~\cite[Section 2.4]{RubinsteinSegermanTillmann19}. 
That paper is not concerned with $L$--essential triangulations or simplicial triangulations.
However, if $\calT$ is $L$--essential, or is insulated simplicial, then the intermediate triangulations described in \cite{{RubinsteinSegermanTillmann19}} are $L$--essential, or are insulated simplicial, respectively. 
In the following section we recall the constructions and check that the two properties are preserved.

\subsection{Stellar subdivisions} 
\subsubsection{$\sigma$ is a face}
\label{Sec:StellarFace}
Suppose that the face $\sigma$ is a face of two distinct tetrahedra $t_1$ and $t_2$. 
A stellar move on $\sigma$ can be realised by a 1-4 move on $t_1$ followed by a 2-3 move along $\sigma$. 
Let $w$ be the new material vertex created by the 1-4 move. 
There are no edge loops based at $w$, and
the new edge created by the 2-3 move connects $w$ with the vertex of $t_2$ opposite $\sigma$.
Therefore an $L$--essential triangulation remains $L$--essential. 
Also all the new triangles and tetrahedra created contain $w$, so an insulated simplicial triangulation remains insulated simplicial after this stellar subdivision.

\subsubsection{$\sigma$ is an edge}
\label{Sec:StellarEdge}
Suppose that the edge $\sigma$ has degree $m$ and is an edge of $m$ distinct tetrahedra of $\calT$. 
If $\calT$ is $L$--essential then $m$ is at least two.
A stellar move on $\sigma$ can be realised as follows.
Perform a 1-4 move on one of the tetrahedra containing $\sigma$ to obtain a triangulation $\calT_0$ which has no $L$--inessential edges and no edge loops based at material vertices. 
Let $w$ be the new vertex introduced by the 1-4 move. 
The degree of $\sigma$ in $\calT_0$ is $m+1$. 
If the degree of $\sigma$ is greater than three then we perform a 2-3 move on a pair of adjacent tetrahedra both of which contain $\sigma$ and exactly one of which contains $w$. 
This reduces the degree of $\sigma$ by one and introduces a new edge between $w$ and one of the original vertices of $\calT$. 
The new triangulation $\calT_1$ obtained after this move has no $L$--inessential edges or edge loops based at material vertices.
The ball $B(\sigma)$ in $\calT_1$ has $m-1$ distinct tetrahedra. 
Repeat this process till the degree of $\sigma$ has been reduced to three. 
Finally perform a 3-2 move on $\calT_{m-2}$ to get rid of $\sigma$ and obtain its stellar subdivision.

If the triangulation $\calT$ is insulated simplicial then the initial degree $m$ is at least three, and the above moves go through, keeping all intermediate triangulations insulated simplicial.

\begin{lemma} \label{Lem:BoundaryConnectivity}
Suppose that $M$ is a compact connected three-manifold with non-empty boundary. 
Suppose that $\calT$ is an insulated simplicial ideal triangulation of $M$. 
Let $\Lambda$ be the (possibly disconnected) triangulated surface that is the link of the ideal vertex set of $\calT$. 
Let $\Lambda'$ be a simplicially triangulated surface that can be obtained from $\Lambda$ by a two-dimensional bistellar move. 
Then there is an insulated simplicial ideal triangulation $\calT'$ such that
\begin{itemize}
\item the link of the ideal vertex set of $\calT'$ is $\Lambda'$ and
\item there exists a sequence of insulated simplicial partially ideal triangulations 
connecting $\calT$ to $\calT'$
where two consecutive triangulations are related by a three-dimensional bistellar move.
\end{itemize}
\end{lemma}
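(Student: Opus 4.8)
The plan is to realize the given two-dimensional bistellar move on $\Lambda$ by a short sequence of three-dimensional bistellar moves on $\calT$, each supported inside a small ball around a single ideal vertex. First observe that, since $\calT$ is insulated simplicial, any tetrahedron of $\calT$ meeting the ideal vertex set meets it in exactly one vertex (two would produce an edge with both endpoints ideal). Consequently each component of $\Lambda$ is the link of one ideal vertex $v$, and via the duality between the star of $v$ and that component: a triangle of $\Lambda$ is dual to a tetrahedron $[v,a,b,c]$ with $a,b,c$ material; an edge of $\Lambda$ is dual to a face $[v,a,b]$ with $a,b$ material; and a vertex of $\Lambda$ is dual to an edge $[v,w]$ with $w$ material. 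The move taking $\Lambda$ to $\Lambda'$ is of one of three types, each supported in one component of $\Lambda$, say the link of $v$: (a) subdividing a triangle $T$ into three by coning to a new vertex; (b) the inverse, deleting a degree-three vertex; or (c) exchanging the diagonal of a quadrilateral made of two triangles, replacing an edge $e$ by the opposite edge $e'$.

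I would realize these as follows. For (a): perform a $1$-$4$ move on the tetrahedron $t=[v,a,b,c]$ dual to $T$; writing $w$ for the new interior vertex, a direct check shows that the link of $v$ undergoes exactly the triangle subdivision, the new vertex being dual to the edge $[v,w]$, while the links of all other ideal vertices are unchanged because the remaining vertices of $t$ are material. For (b): perform a $3$-$2$ move on the edge $[v,w]$ dual to the vertex being deleted---this edge has degree three since that vertex does---replacing $[v,w]$ by a face $[a,b,c]$; the link of $v$ then loses the degree-three vertex exactly as required. For (c): perform a $2$-$3$ move on the face $f=[v,a,b]$ dual to $e$, which replaces the two tetrahedra $[v,a,b,c_1],[v,a,b,c_2]$ on $f$ by three tetrahedra through a new edge $[c_1,c_2]$; the link of $v$ then undergoes exactly the diagonal exchange, replacing $e$ by the dual of $[c_1,c_2]$. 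In each case the move is supported inside a ball, so $M$ is unchanged and consecutive triangulations differ by a single three-dimensional bistellar move.

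The main obstacle is ensuring that every intermediate three-dimensional triangulation stays insulated simplicial. Case (a) is harmless: its new vertex is fresh and material and every new simplex contains it. But the $2$-$3$ move of (c) introduces a new edge $[c_1,c_2]$, and the $3$-$2$ move of (b) a new face $[a,b,c]$ and a new tetrahedron $[w,a,b,c]$; validity of the two-dimensional move only forces the simplices of $\calT$ \emph{through} $v$ to be distinct, so these new cells may duplicate cells of $\calT$ lying outside the star of $v$, destroying simpliciality. To get around this I would, before performing the $2$-$3$ or $3$-$2$ move, perform auxiliary stellar subdivisions of the offending edge $[c_1,c_2]$ (or face $[a,b,c]$, or the star of $w$) of $\calT$---coning operations, so each introduces only a fresh material vertex and no repeated cell---realized by three-dimensional bistellar moves that preserve insulated simpliciality, exactly as recalled in \refsec{StellarFace} and \refsec{StellarEdge} following \cite{RubinsteinSegermanTillmann19}. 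Such an auxiliary subdivision changes the links of finitely many \emph{other} ideal vertices, but only by two-dimensional stellar subdivisions of their edges; I would record these and, after carrying out the main move on the link of $v$, reverse every auxiliary subdivision, restoring all other links. The technical heart of the argument is then the bookkeeping showing that these reversals can be performed with all triangulations remaining insulated simplicial and that the net effect on the ideal-vertex link is precisely the prescribed move $\Lambda\to\Lambda'$; the final triangulation of this sequence is the desired $\calT'$.
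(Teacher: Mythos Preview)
Your proposal follows the paper's approach: realize each two-dimensional bistellar move on $\Lambda$ by the corresponding three-dimensional move ($1\text{-}3 \leadsto 1\text{-}4$, $2\text{-}2 \leadsto 2\text{-}3$, $3\text{-}1 \leadsto 3\text{-}2$), preceded where necessary by a stellar subdivision (carried out via \refsec{StellarEdge} or \refsec{StellarFace}) of the pre-existing edge or face that would obstruct simpliciality.

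The divergence is in your final paragraph. You worry that this auxiliary subdivision might alter the link of some \emph{other} ideal vertex, and you propose to record and later reverse such changes --- bookkeeping you explicitly leave as ``the technical heart'' and do not carry out. The paper sidesteps this entirely with an observation you miss: the offending cell (the edge $e'=[c_1,c_2]$ in the $2\text{-}2$ case, or the face $[a,b,c]$ in the $3\text{-}1$ case) does not lie in $\Lambda$ at all --- this is forced by the simpliciality of $\Lambda'$ --- so the interior of its star is disjoint from $\Lambda$ and its stellar subdivision leaves every ideal-vertex link unchanged. There is therefore nothing to record and nothing to reverse. Your proposed undo machinery is not only unnecessary but is also the least justified part of your argument; adopting the paper's observation closes the proof immediately. (Your aside about possibly subdividing ``the star of $w$'' is also superfluous: once the face $[a,b,c]$ is absent, any tetrahedron containing it is automatically absent too.)
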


\begin{proof}
Suppose that $\Lambda'$ is obtained from $\Lambda$ by a two-dimensional bistellar move on the link of an ideal vertex $v$ of $\calT$. 
Since $\calT$ is insulated simplicial, all vertices of $\Lambda$ are material.
There are three cases as the bistellar move is a 1-3, a 2-2, or a 3-1 move.
\begin{case}[1-3]
Suppose that $\Lambda'$ is obtained by a 1-3 move on a face $f$ of $\Lambda$. 
Let $t$ be the tetrahedron of $\calT$ containing $v$ and $f$. Let $\calT'$ be the triangulation obtained from $\calT$ by a 1-4 move on $t$. Then $\calT'$ is still insulated simplicial and $\Lambda'$ is the link of the ideal vertex set of $\calT'$ as required.
\end{case}

\begin{case}[2-2]
Suppose that $\Lambda'$ is obtained by a 2-2 move on a pair of adjacent faces $f_1$ and $f_2$ of $\Lambda$. Suppose that $e$ is the edge common to $f_1$ and $f_2$.
Suppose that $w_i$ is the vertex of $f_i$ which is not in $e$.  
Since $w_i$ is a vertex of $\Lambda$ it is material.
If there is an edge $e'$ in $\calT$ that connects $w_1$ and $w_2$ then since $\Lambda'$ is simplicial, the edge $e'$ does not lie in $\Lambda'$. 
We eliminate $e'$ by taking its stellar subdivision.
By \refsec{StellarEdge}, this can be realised by a sequence of bistellar moves passing through insulated simplicial triangulations.
These moves do not change $\Lambda$ because the interior of the star of $e'$ is disjoint from $\Lambda$. 
Once there are no edges that connect $w_1$ to $w_2$, the 2-2 move on $\Lambda$ can be realised by applying the 2-3 move on the pair of tetrahedra containing $f_1$, $f_2$, and $v$. 
The new edge connects the vertices $w_1$ and $w_2$.
Since these vertices are material the triangulation obtained after this 2-3 move is insulated simplicial as required.
\end{case}

\begin{case}[3-1]
Suppose that $\Lambda'$ is obtained by a 3-1 move on pairwise adjacent faces $f_1$, $f_2$ and $f_3$ of $\Lambda$. 
Let $w$ be the vertex common to $f_1$, $f_2$, and $f_3$. 
The edge joining $v$ and $w$ is of degree three. 
If there exists a face $f$ in $\calT$ whose boundary is $\partial (f_1 \cup f_2 \cup f_3)$ then since $\Lambda'$ is simplicial, $f$ does not lie in $\Lambda$. 
We eliminate $f$ by taking its stellar subdivision.
By \refsec{StellarFace}, this can be realised by a sequence of bistellar moves passing through insulated simplicial triangulations.
These moves do not change $\Lambda$ because the interior of the star of $f$ is disjoint from $\Lambda$.
Once there is no triangle whose boundary is $\partial (f_1 \cup f_2 \cup f_3)$, the 3-1 move on $\Lambda$ can be realised by applying the 3-2 move along the edge joining $v$ to $w$.
The triangulation obtained after this 3-2 move is insulated simplicial as required. \qedhere
\end{case}
\end{proof}

\begin{lemma}
\label{Lem:BarycentricSubdivision}
Suppose that $M$ is a compact connected three-manifold with non-empty boundary.
Suppose that $\calT$ is an $L$--essential triangulation of $M$.
Let $B\calT$ be the barycentric subdivision of $\calT$.
Then there is a sequence of $L$--essential triangulations 
connecting $\calT$ to $B\calT$,
where two consecutive triangulations are related by a 1-4, 2-3, 3-2, or a 4-1 move.
\end{lemma}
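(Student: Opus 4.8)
The strategy is to realise the barycentric subdivision $B\calT$ as a composite of stellar subdivisions of $\calT$, and then to replace each stellar subdivision by the sequence of bistellar moves constructed in \refsec{StellarFace} and \refsec{StellarEdge}, tracking $L$--essentiality throughout. No 0-2 or 2-0 moves are needed; only 1-4, 2-3, and 3-2 moves appear.

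First I would recall the standard fact that $B\calT$ is obtained from $\calT$ by starring, in turn, at the barycentre of every tetrahedron of $\calT$, then at the barycentre of every (original) triangle, and finally at the barycentre of every (original) edge; the vertices are their own barycentres, so nothing is done there. A short, tetrahedron-by-tetrahedron induction shows that at each stage the star of the simplex being subdivided is well-defined, in the sense that every model tetrahedron meets it at most once, so that the stellar move is legitimate; in the edge case, \refsec{StellarEdge} additionally requires the degree of the edge to be at least two, and this holds since (as we verify along the way) the current triangulation is $L$--essential. The same induction shows that the composite of all these stellar subdivisions is combinatorially $B\calT$. (Compare the discussion of stellar moves in \cite[Section~2.4]{RubinsteinSegermanTillmann19}.)

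Next I would run an induction along this sequence of stellar subdivisions, the inductive hypothesis being that the current triangulation is $L$--essential. A stellar move at the barycentre of a tetrahedron is a 1-4 move, which preserves $L$--essentiality since it only adds a material vertex and the four new edges are not edge loops. A stellar move at the barycentre of a triangle is realised, as in \refsec{StellarFace}, by a 1-4 move followed by a 2-3 move; a stellar move at the barycentre of an edge is realised, as in \refsec{StellarEdge}, by a 1-4 move, a sequence of 2-3 moves reducing the degree to three, and a final 3-2 move. Those two subsections already check that, starting from an $L$--essential triangulation, every intermediate triangulation appearing in these realisations — and in particular the stellarly subdivided triangulation itself — is again $L$--essential. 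This simultaneously feeds the induction (so that the next stellar move is applied to an $L$--essential triangulation) and supplies the interior of the path. Concatenating all of these 1-4, 2-3, and 3-2 moves yields a path of $L$--essential triangulations from $\calT$ to $B\calT$, as desired.

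The only part of this that requires genuine care, rather than an appeal to results already in hand, is the combinatorial bookkeeping of the first step: confirming that the iterated stellar subdivisions remain legitimate at every stage when $\calT$ is merely a triangulation rather than a simplicial complex (so that the stars may carry identifications along their boundaries, as already anticipated in \refsec{StellarEdge}), and that their composite is precisely $B\calT$. Once this is granted, the lemma is an assembly of \refsec{StellarFace}, \refsec{StellarEdge}, and the elementary observation that a 1-4 move preserves $L$--essentiality.
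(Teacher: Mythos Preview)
Your proposal is correct and follows essentially the same approach as the paper: realise $B\calT$ as stellar subdivisions first on all tetrahedra (via 1-4 moves), then on all original faces (via \refsec{StellarFace}), then on all original edges (via \refsec{StellarEdge}), checking $L$--essentiality inductively. The paper's proof is slightly more explicit than yours about why the stars are well-defined at each stage (after the 1-4 moves ``each face \ldots\ lies in two distinct tetrahedra''; after the face subdivisions ``each edge of $\calT$ \ldots\ is contained in each model tetrahedron at most once''), which is exactly the bookkeeping you flag as the only point of care.
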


\begin{proof}
The construction appears in~\cite[Section 2.5]{RubinsteinSegermanTillmann19}. 
Again, that paper is not concerned with $L$--essential triangulations. 
However, if $\calT$ is $L$--essential then the intermediate triangulations described in~\cite{RubinsteinSegermanTillmann19} are also $L$--essential.
Here we recall the construction and check that the intermediate triangulations are $L$--essential. 

We first do a 1-4 move to each tetrahedron of $\calT$. 
This does not introduce any new edge loops. 
Every edge that is introduced by a 1-4 move connects an existing vertex to the material vertex that is created by the 1-4 move. 
Thus at every step the triangulation is $L$--essential.
Each face of this new triangulation lies in two distinct tetrahedra. 
So we can perform a stellar subdivision of all of the faces of $\calT$.
Using \refsec{StellarFace} we realise this via bistellar moves through $L$--essential triangulations.
Finally, each edge of $\calT$ in the new triangulation is contained in each model tetrahedron at most once. 
So we can perform a stellar subdivision of all of the edges of $\calT$.
Using \refsec{StellarEdge} we realise this via bistellar moves through $L$--essential triangulations.
This gives us the barycentric subdivision of $\calT$.
\end{proof}

\begin{theorem}
\label{Thm:ConnectivitySimplicial1-4}
Suppose that $M$ is a compact connected three-manifold with non-empty boundary.
Suppose that $\calT$ and $\calT'$ are $L$--essential ideal triangulations of $M$.
Then there is a sequence of $L$--essential (partially) ideal triangulations connecting $\calT$ to $\calT'$, where two consecutive triangulations are related by a 1-4, 2-3, 3-2, or a 4-1 move.
\end{theorem}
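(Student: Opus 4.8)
The idea is to push everything into the insulated simplicial world, where $L$--essentiality is free, and then combine the boundary-surface lemma \reflem{BoundaryConnectivity} with Casali's bistellar-connectivity theorem~\cite{Casali95}. First I would replace $\calT$ and $\calT'$ by their barycentric subdivisions. By \reflem{BarycentricSubdivision} there is a path of $L$--essential triangulations from $\calT$ to $B\calT$, and likewise from $\calT'$ to $B\calT'$, using only 1-4, 2-3, 3-2, and 4-1 moves. Both $B\calT$ and $B\calT'$ are insulated simplicial: they are simplicial complexes, and an edge joining two ideal vertices would have to join the barycentres of two distinct $0$--cells of $\calT$, which is impossible. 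By \refrem{InsulatedSimplicialLEssential} every insulated simplicial partially ideal triangulation is $L$--essential for \emph{every} labelling, so it suffices to exhibit a path of insulated simplicial partially ideal triangulations from $B\calT$ to $B\calT'$ with consecutive terms related by a three-dimensional bistellar move; concatenating with the two paths above (the second traversed backwards, which is legitimate since 1-4, 2-3, 3-2, 4-1 moves are mutually reversible) then finishes the proof.

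\textbf{Matching the boundaries.} Let $\Lambda$ and $\Lambda'$ be the simplicial triangulations of $\bdy M$ appearing as the links of the ideal vertex sets of $B\calT$ and $B\calT'$. By the two-dimensional case of Pachner's theorem, in the refined form in which every intermediate surface triangulation remains simplicial (cf.~\cite{Casali95}), there is a finite sequence of simplicial triangulations of $\bdy M$ from $\Lambda$ to $\Lambda'$, consecutive ones differing by a single 1-3, 2-2, or 3-1 move. Feeding this sequence one move at a time into \reflem{BoundaryConnectivity} produces a path of insulated simplicial partially ideal triangulations from $B\calT$ to an insulated simplicial ideal triangulation $\calT_1$ whose ideal-vertex link is exactly $\Lambda'$.

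\textbf{Matching the interiors.} Now $\calT_1$ and $B\calT'$ are insulated simplicial ideal triangulations of $M$ inducing the \emph{same} triangulation $\Lambda'$ on $\bdy M$. Applying Casali's theorem~\cite{Casali95} relative to this fixed boundary, they are connected by a sequence of three-dimensional bistellar moves, supported away from $\bdy M$, through simplicial triangulations of $M$; one arranges that the two triangulations first agree on a collar of $\bdy M$ and that all subsequent moves take place in the complement of that collar, in which no vertex is ideal. Then no edge between two ideal vertices is ever created, so every intermediate triangulation is again insulated simplicial, hence $L$--essential. Stringing together the four paths $\calT \to B\calT \to \calT_1 \to B\calT' \to \calT'$ gives the required sequence, and every move used is a 1-4, 2-3, 3-2, or 4-1 move.

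\textbf{Main obstacle.} The delicate point is precisely the last step: simpliciality and insulatedness must be maintained \emph{simultaneously}, and a careless interior 2-3 move on two tetrahedra whose opposite vertices happen to be ideal would introduce an $L$--inessential edge. Making precise the boundary-relative form of Casali's theorem — so that both triangulations share a common collar and all moves are confined to its complement, where every vertex is material — is the technical heart of the argument. (An alternative, also needing care with the order of operations, is to accept Casali's non-relative bistellar sequence and to repair each offending ideal-to-ideal edge by a stellar subdivision, which adds only harmless material vertices.) The remaining verifications — that the surface-level bistellar sequence can be chosen simplicial throughout so that \reflem{BoundaryConnectivity} applies verbatim, and that all moves lie among 1-4, 2-3, 3-2, 4-1 — are routine given the lemmas already established.
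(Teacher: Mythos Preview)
Your approach is essentially the paper's: pass to barycentric subdivisions, match the ideal-vertex links via \reflem{BoundaryConnectivity} and two-dimensional simplicial bistellar connectivity, then invoke Casali's theorem on the complement of the stars of the ideal vertices, and finally undo the subdivisions. The paper makes your ``relative Casali'' step precise exactly as you suggest: after the links agree, the stars $S'$ of the ideal vertices in both triangulations are identical cones, so one excises $\interior(S')$, applies Casali to the resulting material triangulations with common boundary $\Lambda'$, and glues $S'$ back onto every term of the sequence.

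There is, however, a concrete gap: a single barycentric subdivision is not enough. You assert that $B\calT$ is simplicial, but an $L$--essential ideal triangulation can have edge loops at ideal vertices---for instance, the standard two-tetrahedron triangulation of the figure-eight knot complement has a single ideal vertex, so \emph{every} edge is a loop. If $e$ is such a loop at $v$, then $B\calT$ contains two distinct edges joining $v$ to the barycentre of $e$, violating simpliciality. Consequently neither \reflem{BoundaryConnectivity} (which requires an insulated simplicial input) nor Casali's theorem applies to $B\calT$. The fix is exactly what the paper does: take the \emph{second} barycentric subdivision $B^2\calT$. Every cell of $B\calT$ already has distinct vertices (a flag has cells of strictly increasing dimension), and one further subdivision of such a complex is genuinely simplicial; it is also insulated since every edge of $B^2\calT$ joins barycentres of $B\calT$--cells of different dimensions, hence at most one endpoint can be an original ideal vertex. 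With $B^2$ in place of $B$ throughout, your argument goes through and coincides with the paper's.
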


\begin{proof}
Let $V$ be the set of ideal vertices of $M$.  
Note that this gives a bijection between the vertices of $\calT$ and the vertices of $\calT'$. 
Let $B^2\calT$ and $B^2\calT'$ be the second barycentric subdivisions of $\calT$ and $\calT'$ respectively.
Both of these are insulated simplicial triangulations. 
Applying \reflem{BarycentricSubdivision} twice, we get from $\calT$ to $B^2\calT$ through $L$--essential triangulations.

Let $S$ be the star of the ideal vertices $V$ in $B^2\calT$. 
Let $\Lambda = \bdy S$ be the triangulated link of the cusps. 
We define $S'$ and $\Lambda'$ similarly with respect to $B^2\calT'$.  
Note that $\Lambda$ and $\Lambda'$ are two triangulations of a (possibly disconnected) surface $\Sigma$. 
 
The triangulations $\Lambda$ and $\Lambda'$ are related by a sequence of simplicial triangulations of $\Sigma$ where consecutive triangulations are related by a single two-dimensional bistellar move.
See~\cite[Theorem 5, page 64]{Moise77} and~\cite[Theorem 5.9, page 303]{Lickorish99}.
   
Apply \reflem{BoundaryConnectivity} repeatedly to get a sequence of insulated simplicial partially ideal triangulations of $M$, where consecutive triangulations are related by a single three-dimensional bistellar move, taking $B^2\calT$ to a simplicial partially ideal triangulation $\calT_0$ in which the link of $V$ is $\Lambda'$. 
Note that the star of $V$ in $\calT_0$ is equal to $S'$. 

Now consider $\calT_0 - \interior(S')$ and $B^2\calT' - \interior(S')$.
These are triangulations of a compact manifold with no ideal vertices.
The boundary of both triangulations is the triangulated surface $\Lambda'$. 
We now apply \cite[Main Theorem, page 257]{Casali95} to get from $\calT_0 - \interior(S')$ to $B^2\calT' - \interior(S')$ by a sequence of bistellar moves through simplicial triangulations. 
Gluing $\interior(S')$ onto each triangulation gives a sequence of bistellar moves from $\calT_0$ to $B^2\calT'$ through insulated simplicial triangulations.

Thus we obtain a sequence of bistellar moves taking us from $B^2\calT$ to $B^2\calT'$, through insulated simplicial triangulations.
By \refrem{InsulatedSimplicialLEssential}, these intermediate triangulations are $L$--essential.
Finally we reverse the process of taking barycentric subdivisions to get from $B^2\calT'$ back to $\calT'$.
Using \reflem{BarycentricSubdivision} again, the intermediate triangulations are $L$--essential.
\end{proof}

The combinatorics of the 1-4 move are slightly more complicated than that of the bubble move.
To simplify later arguments we replace the former with the latter.

\begin{corollary}
\label{Cor:ConnectivitySimplicial}
Suppose that $M$ is a compact connected three-manifold with non-empty boundary.
Suppose that $\calT$ and $\calT'$ are $L$--essential ideal triangulations of $M$.
Then there is a sequence of $L$--essential (partially) ideal triangulations connecting
$\calT$ to $\calT'$,
where two consecutive triangulations are related by a 2-3 move, a bubble move, or inverses of these moves.
\end{corollary}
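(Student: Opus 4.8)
The plan is to start from the sequence of $L$--essential (partially) ideal triangulations produced by \refthm{ConnectivitySimplicial1-4}, in which consecutive triangulations are related by a 1-4, 2-3, 3-2, or 4-1 move, and then splice out the 1-4 and 4-1 moves. By \reflem{1-4}, each 1-4 move can be realised as a bubble move followed by a 2-3 move; reversing \reflem{1-4}, each 4-1 move can be realised as a 3-2 move followed by a reverse bubble move. Performing these substitutions throughout the sequence gives a new sequence connecting $\calT$ to $\calT'$ in which every move is a 2-3 move, a bubble move, or an inverse of one of these. What remains is to check that the triangulations newly introduced by the substitutions are again $L$--essential.

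There is exactly one such new triangulation for each substituted move, and in every case it is the image of a bubble move applied to one of the (already $L$--essential) triangulations of the original \refthm{ConnectivitySimplicial1-4} sequence: for a substituted 1-4 move it is the bubble applied to the triangulation before that move, and for a substituted 4-1 move it is the bubble applied to the triangulation after that move. So it suffices to prove the following claim: if $\calS$ is an $L$--essential (ideal or partially ideal) triangulation of $M$ and $\calS'$ is obtained from $\calS$ by a bubble move, then $\calS'$ is $L$--essential.

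To prove the claim, I would work with the dual foam $\calG$ of $\calS$ and the edge $e$ of $\calG$ along which the bubble move acts. The bubble move replaces a sub-arc of $e$ by a small sphere $\bdy\calN$, which $\calG$ cuts into three bigons, and the interior of $\calN$ becomes a single new material region, dual to a new material vertex $w$ of $\calS'$. Because the move is supported in the ball $\calN$: no face of $\calG$ is deleted or merged, so the edges of $\calS'$ are those of $\calS$ together with three new edges, one dual to each new bigon; no peripheral region is created or altered, so every peripheral region (hence every ideal vertex of $\calS'$) carries the same label as before; and no edge loop is created at an old vertex. Each of the three new bigons has the new material region on one side, so each new edge of $\calS'$ has $w$ as one endpoint and a pre-existing vertex (necessarily distinct from $w$) as the other. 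Hence the new edges are neither edge loops based at a material vertex nor $L$--inessential edges between ideal vertices; since all old edges and all peripheral labels are unchanged, $\calS'$ is $L$--essential in the sense of \refdef{LEssentialPartiallyIdeal}. (One uses here that $\calS$ being $L$--essential forbids edge loops altogether, so no triangle degenerates and the local picture of the bubble move is the standard one.)

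The substantive point is the claim of the third paragraph --- that a bubble move cannot destroy $L$--essentiality --- and the only real work is the local combinatorial analysis of the bubble move on $\calG$; once that is in place the remainder is bookkeeping, namely matching each substituted 1-4 or 4-1 move to a bubble-move image of an $L$--essential triangulation already furnished by \refthm{ConnectivitySimplicial1-4}. I do not expect any genuine obstacle beyond this modest verification.
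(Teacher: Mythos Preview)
Your proposal is correct and follows exactly the paper's approach: take the sequence from \refthm{ConnectivitySimplicial1-4}, substitute each 1-4 move by a bubble followed by a 2-3 (and each 4-1 by a 3-2 followed by a reverse bubble) via \reflem{1-4}, and observe that the single new intermediate in each substitution is a bubble applied to an already $L$--essential triangulation. The paper asserts $L$--essentiality of the intermediates in one sentence; you have supplied the easy local verification (the three new edges all have the new material vertex $w$ at one end, so none is an edge loop and none joins two ideal vertices), which is correct---your closing parenthetical about edge loops being ``forbidden altogether'' is a slight overclaim (an edge loop at an ideal vertex can be $L$--essential), but it is not needed for the argument.
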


\begin{proof}
We modify the sequence of triangulations given by \refthm{ConnectivitySimplicial1-4}.
Using \reflem{1-4} we replace each 
1-4 move with a bubble move followed by a 2-3 move. 
We similarly replace each 4-1 move with a 3-2 move followed by a reverse bubble move.
Each intermediate triangulation is $L$--essential.
\end{proof}

\section{Snake handling}
\label{Sec:SnakeHandling}

In order to prove \refthm{ConnectivityWith0-2}, we take the sequence of moves given by \refcor{ConnectivitySimplicial} and modify it to remove the bubble moves. 
As in the proof of \refthm{ExistsTriangulation} we use snakes to do this.
In this section we describe some moves and algorithms for modifying snakes and snake paths using 2-3, 3-2, 0-2, and 2-0 moves.

\subsection{Building a snake}

The first hurdle we must overcome is that we can only use 2-3, 3-2, 0-2, and 2-0 moves to build our snakes. 

\begin{definition}
\label{Def:BuildSnake}
Suppose that $\calF$ is a foam.
Suppose that $\gamma$ is a snake path in $\calF$.
We \emph{build along $\gamma$} to produce a new foam denoted by $\calF[\gamma]$ as follows.
Consider the first segment $\gamma'$ of $\gamma - \calF^{(1)}$.
We perform a 0-2 move along $\gamma'$; the new bigon is placed at the end of $\gamma'$.
If $\gamma' = \gamma$ then we are done.
If not, let $\eta(\gamma')$ be a small regular neighbourhood of $\gamma'$.
Then $\gamma - \eta(\gamma')$ is again a snake path and we recurse.
\end{definition}

This construction is the dual of~\cite[Algorithm~8.15]{Segerman12}, which was the inspiration for snakes and snake paths in this paper.
Figures~\ref{Fig:BuildSnake1} and~\ref{Fig:BuildSnake2} show one move in the sequence of 0-2 moves. 
\reffig{BuildSnakePath} shows the result of building a snake along the path $\gamma$ shown in \reffig{InflateSnakePath1} if it is oriented from left to right.

\begin{figure}[htbp]
\subfloat[]{
\includegraphics[width = 0.45\textwidth]{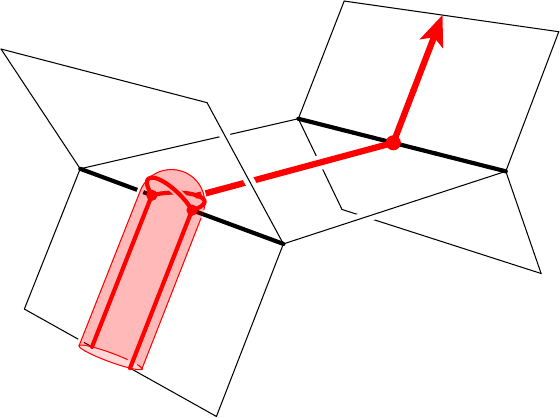}
\label{Fig:BuildSnake1}
}
\quad
\subfloat[]{
\includegraphics[width = 0.45\textwidth]{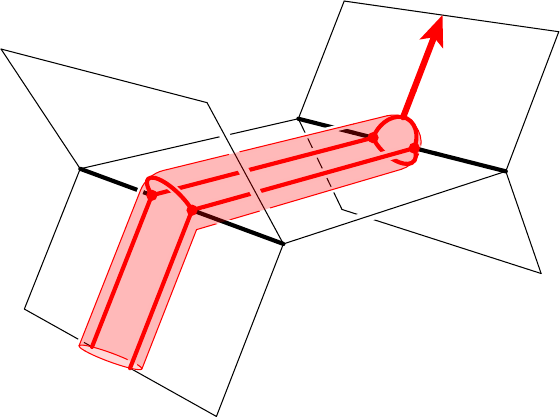}
\label{Fig:BuildSnake2}
}

\subfloat[$\calF{[}\gamma{]}$]{
\includegraphics[width = 0.55\textwidth]{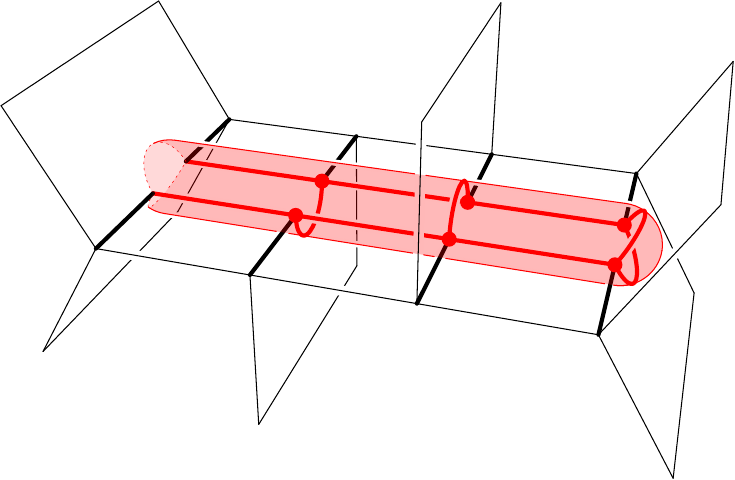}
\label{Fig:BuildSnakePath}
}
\caption{Given a snake path we build a snake using 0-2 moves. 
}
\label{Fig:BuildSnake}
\end{figure}

\begin{remark}
\label{Rem:BuildSnake}
In the construction of the inflated foam $\calF(\gamma)$ (\refdef{InflatingSnakePath}) we remove both bigons from $\bdy \eta(\gamma)$. 
The foam $\calF[\gamma]$ is identical to $\calF(\gamma)$ except that the former retains the bigon at the terminal point of $\gamma$.
\end{remark}

\begin{remark}
\label{Rem:BuildSnakeLEssential}
Note that if $\calF$ is $L$--essential and $\gamma$ is $L$--self-avoiding then the intermediate foams arising in \refdef{BuildSnake} are all $L$--essential. 
The argument is similar to the proof of \reflem{EssentialSnake}.
\end{remark}

\subsection{Basic moves on snakes}
\label{Sec:SnakeBasicMoves}

We will isotope snake paths around on foams $\calF$ in various ways.
When we do so, the corresponding inflated foams $\calF(\gamma)$ change. 
In this section we describe how to implement these modifications using 2-3, 3-2, 0-2, and 2-0 moves.

\subsubsection{Sliding over an edge (slide move A)}
\label{Sec:MoveA}
See \reffig{SnakeSlideOverEdge}. 
We can implement slide move A by doing two 0-2 moves.
\begin{figure}[htbp]
\subfloat[Before slide move A.]{
\includegraphics[width = 0.32\textwidth]{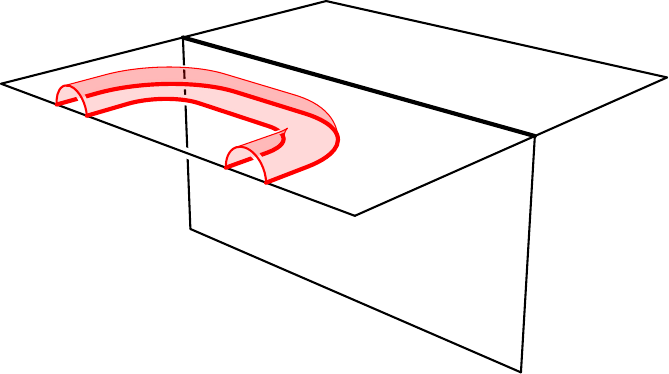}
\label{Fig:SnakeSlideOverEdge1}
}
\qquad
\subfloat[After slide move A.]{
\includegraphics[width = 0.32\textwidth]{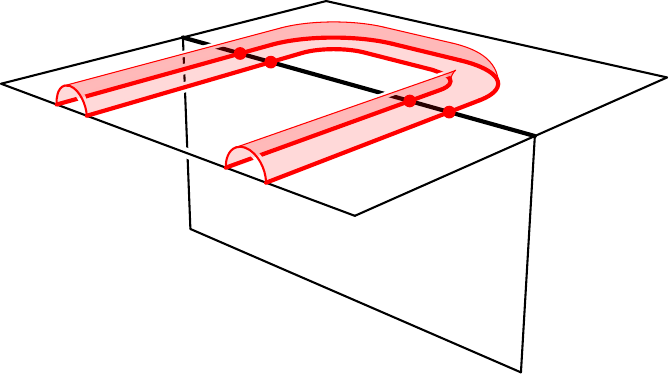}
\label{Fig:SnakeSlideOverEdge2}
}
\caption{Sliding a snake over an edge of the foam (\emph{slide move A}).}
\label{Fig:SnakeSlideOverEdge}
\end{figure}

\subsubsection{Sliding over a vertex (slide move B)}
\label{Sec:MoveB}

See \reffig{SnakeSlideOverVertex}. 
We can implement slide move B by doing two 2-3 moves.
Note that the 2-3 moves each involve one vertex of the background foam and one vertex on the snake. Thus these vertices are distinct and the 2-3 move is possible.

\begin{figure}[htbp]
\subfloat[Before slide move B.]{
\includegraphics[width = 0.32\textwidth]{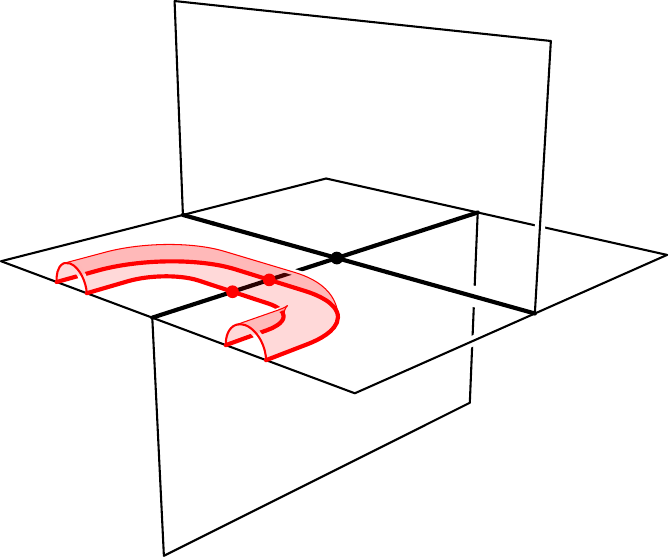}
\label{Fig:SnakeSlideOverVertex1}
}
\qquad
\subfloat[After slide move B.]{
\includegraphics[width = 0.32\textwidth]{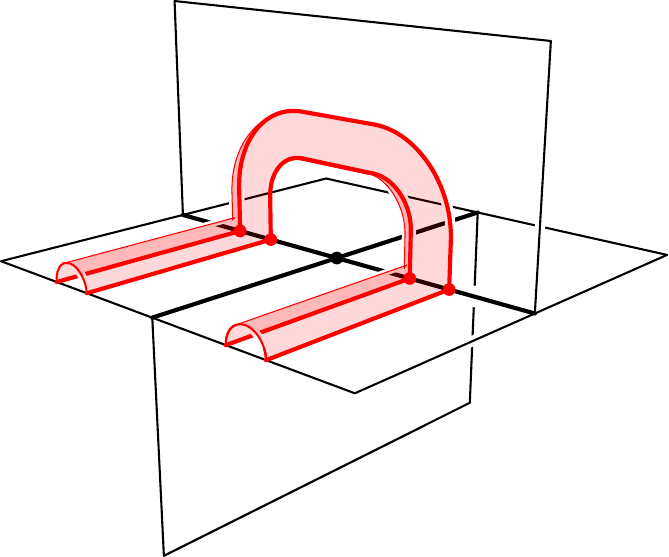}
\label{Fig:SnakeSlideOverVertex2}
}
\caption{Sliding a snake over a vertex of the foam (\emph{slide move B}).}
\label{Fig:SnakeSlideOverVertex}
\end{figure}

\subsubsection{Sliding into a face (slide move C)}
\label{Sec:MoveC}

See \reffig{SnakeSlideIntoFace}. 
We can implement slide move C by doing a 0-2 move.

\begin{figure}[htbp]
\subfloat[Before slide move C.]{
\includegraphics[width = 0.32\textwidth]{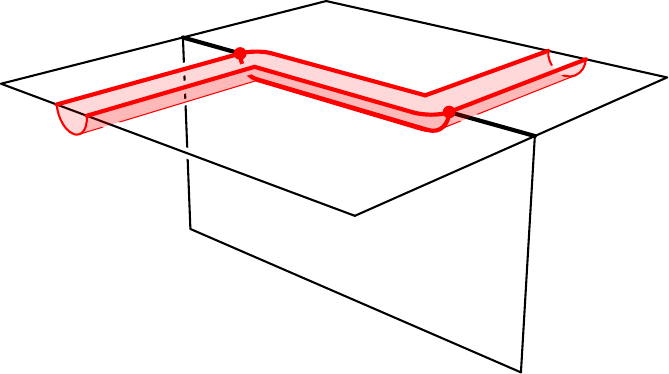}
\label{Fig:SnakeSlideIntoFace1}
}
\qquad
\subfloat[After slide move C.]{
\includegraphics[width = 0.32\textwidth]{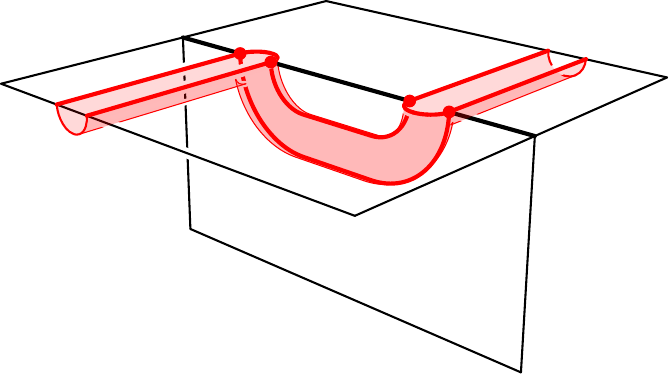}
\label{Fig:SnakeSlideIntoFace2}
}
\caption{Sliding a snake into a face of the foam (\emph{slide move C}).}
\label{Fig:SnakeSlideIntoFace}
\end{figure}

\subsubsection{Sliding a snake end (slide move D)}
\label{Sec:MoveD}

See \reffig{SnakeSlideTail}. 
We can implement slide move D by doing a 0-2 move.

\begin{figure}[htbp]
\subfloat[Before slide move D.]{
\includegraphics[width = 0.32\textwidth]{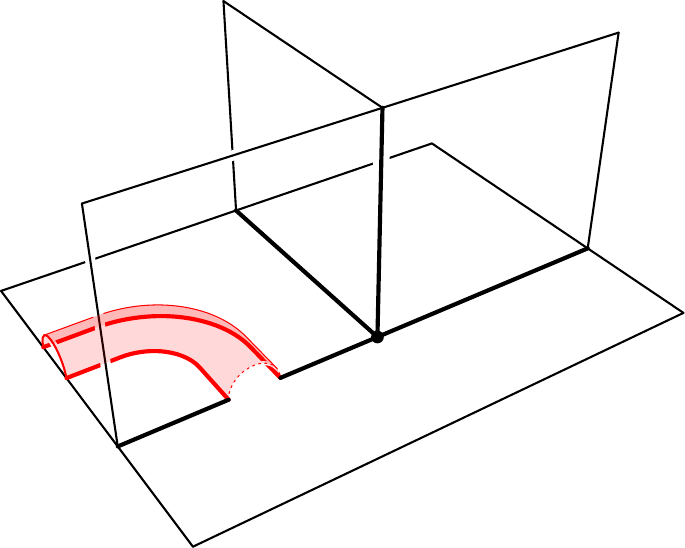}
\label{Fig:SnakeSlideTail1}
}
\subfloat[Before slide move D.]{
\includegraphics[width = 0.32\textwidth]{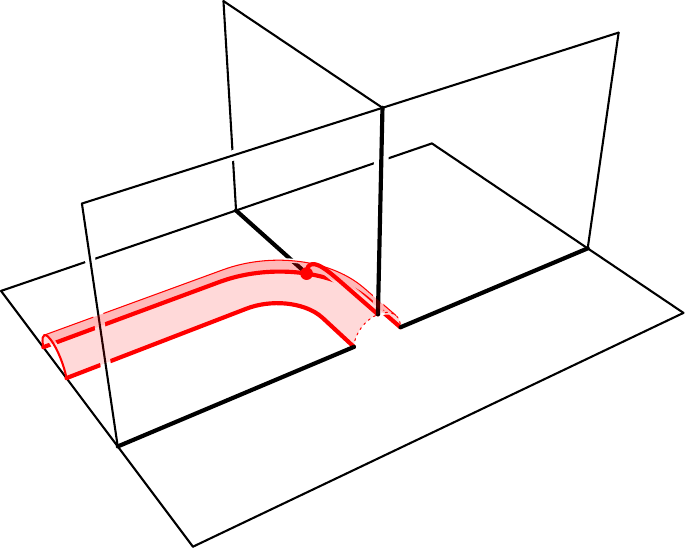}
\label{Fig:SnakeSlideTail2}
}
\subfloat[After slide move D.]{
\includegraphics[width = 0.32\textwidth]{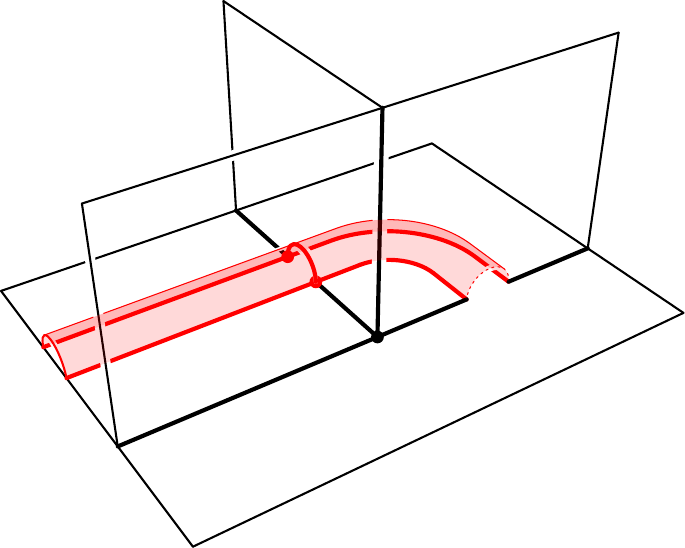}
\label{Fig:SnakeSlideTail3}
}
\caption{Sliding an end of a snake across a vertex (\emph{slide move D}). Here \reffig{SnakeSlideTail2} is an isotopy of \reffig{SnakeSlideTail1} to better see the 0-2 move between \reffig{SnakeSlideTail2} and \reffig{SnakeSlideTail3}.}
\label{Fig:SnakeSlideTail}
\end{figure}

Without any further hypotheses, it is possible that moves A and B could introduce an $L$--inessential face. 
We will deal with this possibility as it arises. 
See, for example, \refsec{TruncateSnake}.

\subsection{Finger moves}
\label{Sec:FingerMoves}

We need five \emph{finger moves}.
Each of these is applied to a foam $\calF$ containing a snake path $\gamma$.
The finger moves are shown in \reffig{FingerMoves}.
Each is an isotopy of $\calF$ in $M$ together with an isotopy of $\gamma$ in $\calF$.

\begin{figure}[htbp]
\subfloat[Before finger move A.]{
\labellist
\hair 2pt \small
\pinlabel $x$ [br] at 93 115
\endlabellist
\includegraphics[width = 0.32\textwidth]{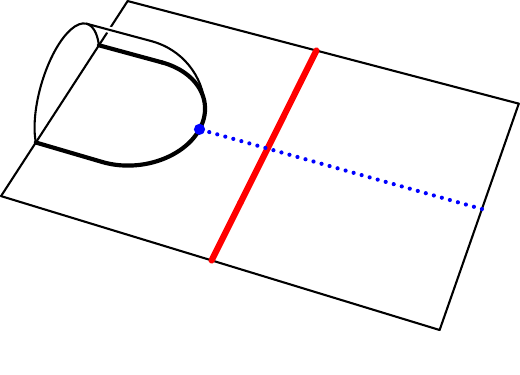}
\label{Fig:FingerA1}
}
\quad
\subfloat[After finger move A.]{
\labellist
\hair 2pt \small
\pinlabel $x$ [br] at 193 86
\endlabellist
\includegraphics[width = 0.32\textwidth]{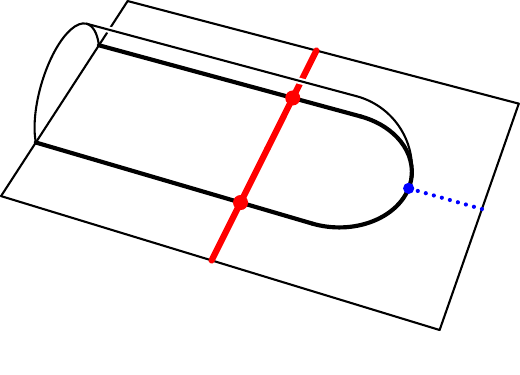}
\label{Fig:FingerA2}
}

\subfloat[Before finger move H.]{
\includegraphics[width = 0.32\textwidth]{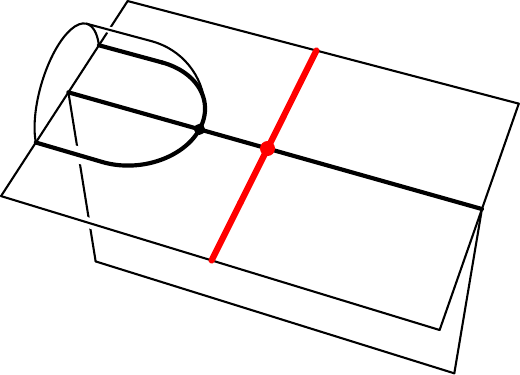}
\label{Fig:FingerHoriz1}
}
\quad
\subfloat[After finger move H.]{
\includegraphics[width = 0.32\textwidth]{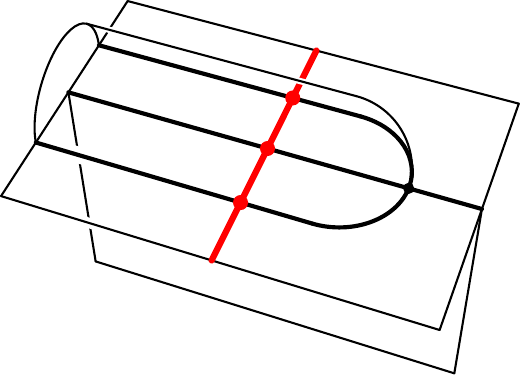}
\label{Fig:FingerHoriz2}
}

\subfloat[Before finger move V.]{
\includegraphics[width = 0.32\textwidth]{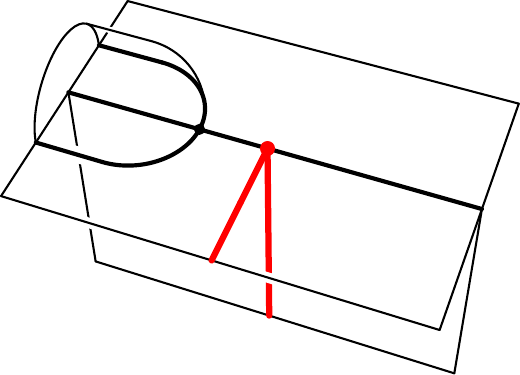}
\label{Fig:FingerVert1}
}
\quad
\subfloat[After finger move V.]{
\includegraphics[width = 0.32\textwidth]{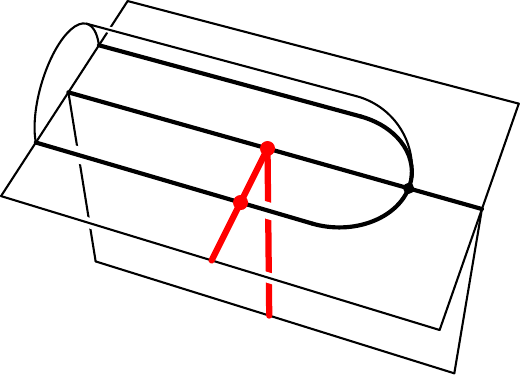}
\label{Fig:FingerVert2}
}

\subfloat[Before finger move EH.]{
\includegraphics[width = 0.32\textwidth]{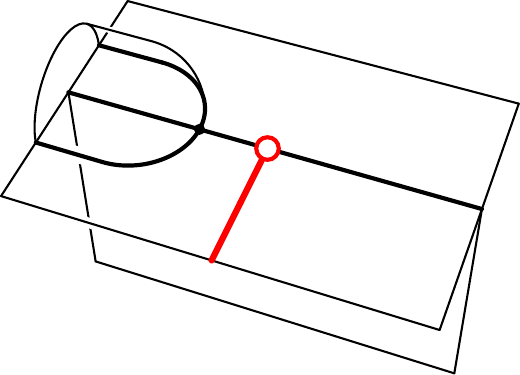}
\label{Fig:FingerHorizNeck1}
}
\quad
\subfloat[After finger move EH.]{
\includegraphics[width = 0.32\textwidth]{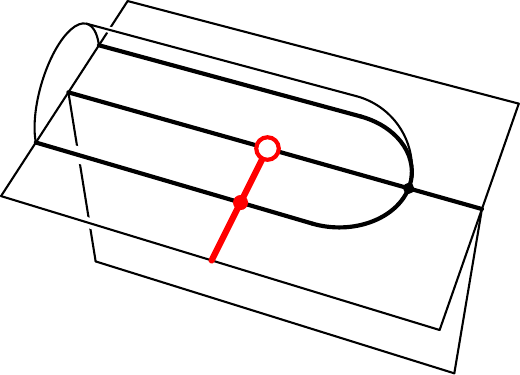}
\label{Fig:FingerHorizNeck2}
}

\subfloat[Before finger move EV.]{
\includegraphics[width = 0.32\textwidth]{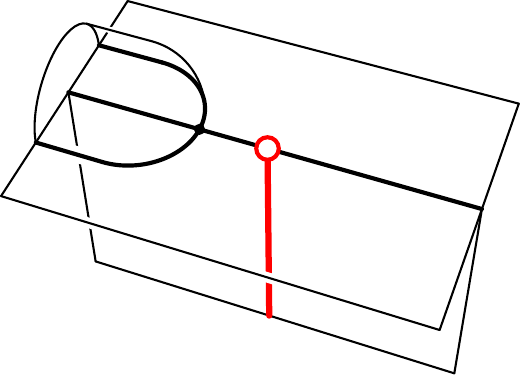}
\label{Fig:FingerVertNeck1}
}
\quad
\subfloat[After finger move EV.]{
\includegraphics[width = 0.32\textwidth]{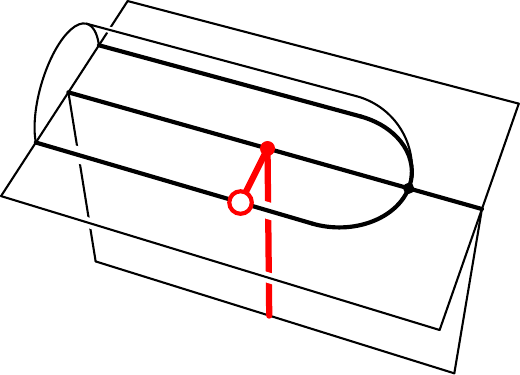}
\label{Fig:FingerVertNeck2}
}
\caption{Finger moves. The snake path is drawn in red. In Figures~\ref{Fig:FingerA1} and~\ref{Fig:FingerA2} the blue dotted line is a path along which a 0-2 move will be applied.}
\label{Fig:FingerMoves}
\end{figure}

Corresponding to a finger move on the pair $(\calF, \gamma)$ we have a move on the inflated foam $\calF(\gamma)$.
These moves on the inflated foams can be implemented, as follows, using basic moves.

\begin{itemize}
\item Finger move A (along the path a 0-2 move will occur) is implemented by a slide move A.
\item Finger move H (across a ``horizontal'' snake path) is implemented by the following sequence of moves: B, C, B, $\textrm{C}^{-1}$.
See \reffig{ImplementHMove}.
\item Finger move V (across a ``partially vertical'' snake path) is implemented by a slide move B.
\item Finger move EH (across the end of a ``horizontal'' snake path) is implemented by a slide move D.
\item Finger move EV (across the end of a ``vertical'' snake path) is implemented by a slide move D.
\end{itemize}

\begin{figure}[htbp]
\subfloat[Before.]{
\includegraphics[width = 0.27\textwidth]{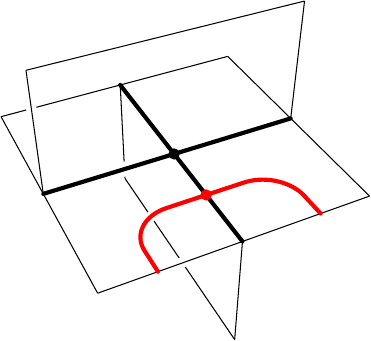}
\label{Fig:ImplementHMove0}
}
\quad
\subfloat[Apply slide move B.]{
\includegraphics[width = 0.27\textwidth]{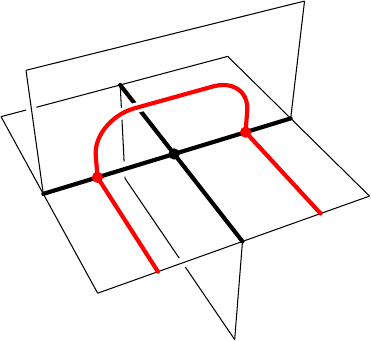}
\label{Fig:ImplementHMove1}
}
\quad
\subfloat[Apply slide move C.]{
\includegraphics[width = 0.27\textwidth]{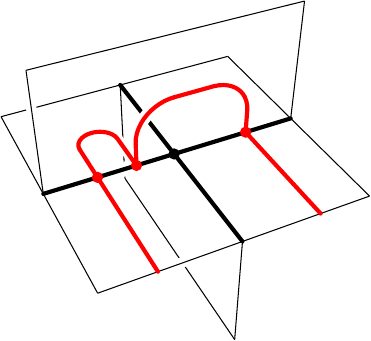}
\label{Fig:ImplementHMove2}
}

\subfloat[Apply slide move B.]{
\includegraphics[width = 0.29\textwidth]{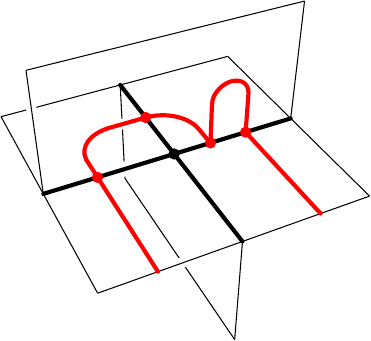}
\label{Fig:ImplementHMove3}
}
\quad
\subfloat[Apply slide move $\textrm{C}^{-1}$.]{
\includegraphics[width = 0.29\textwidth]{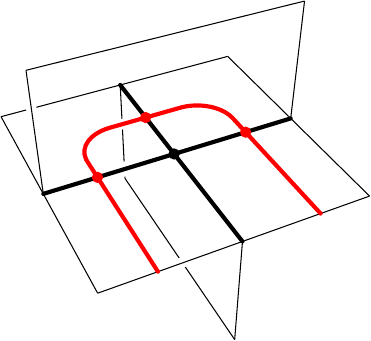} 
\label{Fig:ImplementHMove4}
}
\caption{Steps to implement an H finger move. We show only a small patch of the front of the finger.}
\label{Fig:ImplementHMove}
\end{figure}

Note that if there are no $L$--inessential faces before or after an H finger move then there are also no $L$--inessential faces created by the basic moves that realise it.

\subsection{Replacing a snake}

Given a material region and a snake path targeting it, we may need to replace the snake path in order to change the label on the material region.
The following algorithm achieves this.

\begin{algorithm}[Replace snake]
\label{Alg:ReplaceSnake}\,\\  
\noindent
\textbf{Input:} An $L$--essential foam $\calF$, a material region $D$ of $\cover{M} - \cover{\calF}$, and disjoint $L$--self-avoiding snake paths $\gamma_P$ and $\gamma_Q$ in $\calF$ with the following properties.
\begin{itemize}
\item There is a peripheral region $P$ and a lift $\cover{\gamma}_P$ of $\gamma_P$ with source $P$.
\item There is a peripheral region $Q$ and a lift $\cover{\gamma}_Q$ of $\gamma_Q$ with source $Q$.
\item The targets of $\cover{\gamma}_P$ and $\cover{\gamma}_Q$ are both $D$.
\item $L(P) \neq L(Q)$.
\end{itemize}
\textbf{Output:} A sequence of 2-3, 3-2, 0-2, and 2-0 moves that starts with the inflated foam $\calF(\gamma_P)$, produces the inflated foam $\calF(\gamma_Q)$, and such that every foam in the sequence is $L$--essential.\,\\

Starting from $\calF(\gamma_P)$, we first build a snake (as in \refdef{BuildSnake}) along $\gamma_Q$.
This gives us the foam $\calF(\gamma_P)[\gamma_Q]$.
Note that since $\gamma_Q$ is disjoint from $\gamma_P$, the snake building process for $\gamma_Q$ is unaffected by the presence of the snake $\eta(\gamma_P)$.

Our current goal is to perform 2-3, 3-2, 0-2, and 2-0 moves to get from $\calF(\gamma_P)[\gamma_Q]$ to $\calF(\gamma_Q)[\gamma_P]$, always staying $L$--essential.
In $\calF(\gamma_P)[\gamma_Q]$ the peripheral region $Q$ meets $D$ along a single bigon $b_Q$.
Note that the boundary $\bdy D$ lies in $\calF(\gamma_P)[\gamma_Q]$ apart from a bigon, $b_P$ say, where the snake $\eta({\gamma}_P)$ meets $D$.
Our plan is to push the disk $b_Q$ through $D$ to $b_P$.

Let $e_1$ and $e_2$ be the two edges of $\bdy D$ that meet $b_P$.
See \reffig{ReplaceSnakeBigons}.
Let $G$ be the graph obtained by intersecting the one-skeleton of $\calF(\gamma_P)[\gamma_Q]$ with $D$ and then subtracting the open edges $e_1$ and $e_2$.

\begin{figure}[htbp]
\subfloat[The end of the $P$ snake. The bigon $b_P$ is not a face of the foam.]{
\labellist
\hair 2pt \small
\pinlabel $b_P$ at 77 36
\pinlabel $e_1$ [t] at 32 33
\pinlabel $e_2$ [t] at 122 33
\endlabellist
\includegraphics[width = 0.32\textwidth]{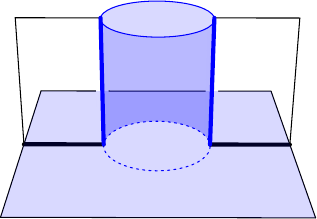}
\label{Fig:ReplaceSnake0P}
}
\quad
\subfloat[The end of the $Q$ snake. The bigon $b_Q$ is a face of the foam.]{
\labellist
\hair 2pt \small
\pinlabel $b_Q$ at 77 36
\endlabellist
\includegraphics[width = 0.32\textwidth]{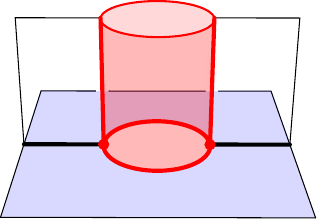}
\label{Fig:ReplaceSnake0Q}
}

\subfloat[Top down view of $b_P$.]{
\includegraphics[width = 0.32\textwidth]{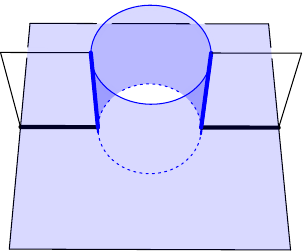}
\label{Fig:ReplaceSnake0PTop}
}
\quad
\subfloat[Top down view of $b_Q$.]{
\includegraphics[width = 0.32\textwidth]{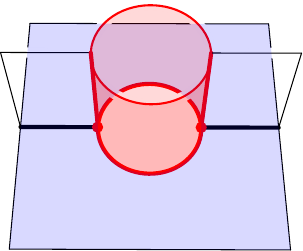}
\label{Fig:ReplaceSnake0QTop}
}
\caption{Bigons incident to a ball.  
}
\label{Fig:ReplaceSnakeBigons}
\end{figure}

\begin{lemma}
$G$ is connected.
\end{lemma}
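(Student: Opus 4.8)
The plan is to regard $\bdy D$ as a $2$--sphere carrying a CW structure, to identify $G$ as a graph embedded in that sphere obtained by deleting two edges, and to show that neither deletion can disconnect it.

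\emph{Setting up.} Since $D$ is a material region, it is an open $3$--ball, so $\bdy D$ is a $2$--sphere, and intersecting $\bdy D$ with the $2$--skeleton of $\calF(\gamma_P)[\gamma_Q]$ yields a CW decomposition of $\bdy D$: its $1$--skeleton is $\Gamma := \calF(\gamma_P)[\gamma_Q]^{(1)} \cap \closure{D}$, and its $2$--cells are the open bigon $b_P$ together with the local disks in which the foam faces incident to $D$ meet $\bdy D$, one of the latter being the bigon $b_Q$. Each of these $2$--cells is an open disk (foam faces are disks by \refdef{Foam}, and $b_P$, $b_Q$ are bigons), so $\Gamma$, being the $1$--skeleton of a CW structure on the connected space $\bdy D$, is connected. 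Moreover $\bdy D$ has at least three $2$--cells: it contains both $b_P$ and $b_Q$, and the boundary sphere of a material region of a foam has at least four faces.

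\emph{The bridge argument.} Now $G$ is $\Gamma$ with the open edges $e_1$ and $e_2$ deleted. If $e_1 = e_2$, then $b_P$ is a $2$--cell whose boundary walk traverses a single edge; an Euler characteristic count on the closed disk $\closure{b_P}$ forces that edge to be a loop, a loop is never a bridge, and so $G = \Gamma - e_1$ is connected. So assume $e_1 \neq e_2$; then $\bdy b_P = e_1 \cup e_2$ is an embedded circle in $\Gamma$. Let $F_i$ be the $2$--cell of $\bdy D$ lying on the side of $e_i$ opposite $b_P$. Deleting $e_1$ from the connected plane graph $\Gamma$ disconnects it only if $e_1$ is a bridge, i.e.\ only if a single $2$--cell borders $e_1$ on both sides; since $b_P$ is one of those $2$--cells and $\bdy b_P$ traverses $e_1$ exactly once, this would force $F_1 = b_P$, contrary to $e_1 \neq e_2$. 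Hence $\Gamma - e_1$ is connected, and in it $b_P$ and $F_1$ have merged into one $2$--cell. Likewise, deleting $e_2$ from $\Gamma - e_1$ disconnects it only if $e_2$ borders the same $2$--cell on both sides; one side of $e_2$ is the merged cell $b_P \cup F_1$ and the other is $F_2$, so this would force $F_2 \in \{b_P, F_1\}$. As before $F_2 \neq b_P$, so the only surviving possibility is $F_1 = F_2$.

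\emph{Ruling out $F_1 = F_2$ --- the crux.} If $F_1 = F_2 =: F$, then each of $e_1$, $e_2$ has $b_P$ on one side and $F$ on the other, so the embedded circle $e_1 \cup e_2$ separates $\bdy D$ into the two open disks $b_P$ and $F$; but then the CW decomposition of $\bdy D$ has only two $2$--cells, contradicting the count above. Therefore $F_1 \neq F_2$, so no deletion disconnects, and $G$ is connected. The delicate step is this last paragraph --- equivalently, the assertion that the window bigon $b_P$ where the $P$--snake enters $D$ is not ``parallel into'' a single face of $\bdy D$; were that to fail, $G$ would in fact be disconnected, so the argument genuinely relies on $D$ being a bona fide material ball region (so that $\bdy D$ has enough $2$--cells) and on the local structure of the inflated snake near its target, as depicted in \reffig{ReplaceSnakeBigons}. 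Pinning this local picture down carefully is the main work of the proof.
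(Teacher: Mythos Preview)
Your argument rests on a misreading of what $e_1$ and $e_2$ are. You take them to be the two boundary arcs of the bigon $b_P$, so that $\bdy b_P = e_1 \cup e_2$ is a circle in $\Gamma$ and everything reduces to a planar bridge count. That is not the geometry. Because $\gamma_P$ is \emph{inflated} (not built), the terminal bigon is removed; along the boundary circle of the resulting window on $\bdy D$ only two sheets of the new foam meet, so that circle lies in the interior of a face and is \emph{not} in $\calF(\gamma_P)[\gamma_Q]^{(1)}$. What survives on $\bdy D$ from the original edge $e$ on which $\gamma_P$ terminated are two arcs $e_1,e_2$---one on each side of $b_P$, as in \reffig{ReplaceSnake0P}---each with a \emph{free endpoint} where the foam edge bends off $\bdy D$ and runs along the snake. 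This is precisely why the paper forms $H$ by ``gluing the two free endpoints of $e_1$ and $e_2$ together'': doing so reconstitutes $e$ and recovers (up to the harmless $Q$--modification) the one-skeleton of $\bdy D$ in the original foam $\calF$.

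So your foundational claim---that $\Gamma$ is the $1$--skeleton of a CW decomposition of $\bdy D$ with $b_P$ as one of its $2$--cells---is false: the boundary of $b_P$ is not contained in $\Gamma$, and your bridge argument has nothing to grip. (A secondary error: the boundary sphere of a material region need not have four faces; a bubble has three.) The paper's proof sidesteps all of this: once one passes to $H$, connectedness of $G$ amounts to $e$ not being a bridge in $H$, and if it were, the face incident to $e_1,e_2$ \emph{not} lying on $\bdy D$ would have the same region on both sides, hence be $L$--inessential---contradicting the standing hypothesis that $\calF$ is $L$--essential.
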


\begin{proof}
Let $H$ be the graph formed from $G \cup \{e_1, e_2\}$ by gluing the two free endpoints of $e_1$ and $e_2$ together.
Note that $H$ is the one-skeleton of $D$ in $\calF$.
If $H$ had multiple components then some face of $\bdy D$ would not be a disk, a contradiction. 
Suppose instead that $H$ is connected but that $G$ is not. 
Then the faces of $\calF(\gamma_P)[\gamma_Q]$ incident to $e_1$ and $e_2$ that do not lie on $\bdy D$ would be $L$--inessential. 
\end{proof}

Let $T$ be a spanning tree of $G$ that contains (precisely) one of the two edges of $b_Q$.
We delete the interior of this edge from $T$ to form $T_Q$, which consists of two trees (one of which may be a single vertex).

\begin{figure}[htbp]
\subfloat[Before the first 2-3 move.]{
\includegraphics[width = 0.32\textwidth]{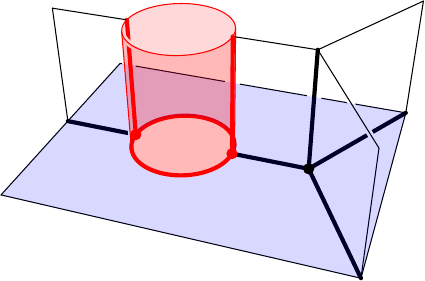}
\label{Fig:ReplaceSnake1Before}
}
\quad
\subfloat[After the first 2-3 move.]{
\includegraphics[width = 0.32\textwidth]{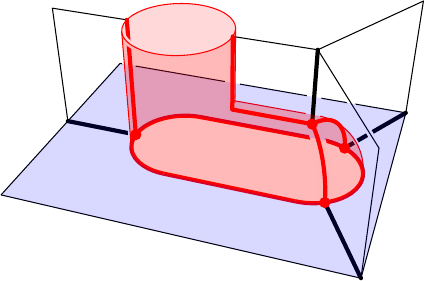}
\label{Fig:ReplaceSnake1After}
}
\caption{2-3 moves expand the $Q$ region. 
}
\label{Fig:ReplaceSnake2-3}
\end{figure}

We apply 2-3 moves to the foam, spreading out from $b_Q$ along $T_Q$.
\reffig{ReplaceSnake2-3} shows the first 2-3 move.
Note that the vertices we wish to apply the first 2-3 move to cannot be the same as each other (and so the edge we want to apply the 2-3 move to cannot be cyclic). 
This is because one of the vertices was just produced by the 0-2 move creating $b_Q$, while the other was already a vertex of $\bdy D$. 
A similar argument applies to the subsequent moves.
The $L$--self-avoiding hypothesis implies that these moves do not produce $L$--inessential faces.

Once all of the 2-3 moves have been done, the edges of $G - T_Q$ are dual to the edges of a dual tree $T^P$ made from the faces of $D$.
Here note that $e_1$ and $e_2$ are not in $G$.
Thus the two faces of $D$ that meet $b_P$ (together with $b_P$ itself) count as a single vertex of $T^P$.
We take this vertex to be the root of $T^P$.
Given a leaf of $T^P$, we may remove it by applying a 2-0 move. 
See \reffig{ReplaceSnake2-0}.

\begin{figure}[htbp]
\subfloat[Before a 2-0 move.]{
\includegraphics[width = 0.32\textwidth]{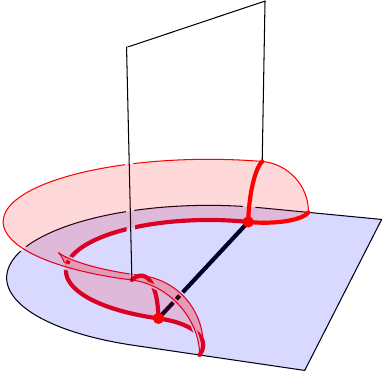}
\label{Fig:ReplaceSnake3Before}
}
\quad
\subfloat[After a 2-0 move.]{
\includegraphics[width = 0.32\textwidth]{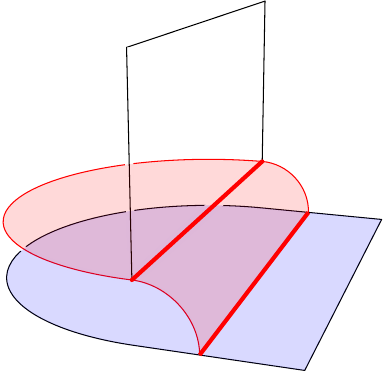}
\label{Fig:ReplaceSnake3After}
}
\caption{2-0 moves expand $Q$ further. 
}
\label{Fig:ReplaceSnake2-0}
\end{figure}

We repeatedly remove leaves of $T^P$ until all that is left is the root of $T^P$.
(Each 2-0 move removes a face of contact between regions but does not add any. 
Thus we remain $L$--essential.)
This done, the region $P$ meets $D$ only in a small neighbourhood of the bigon $b_P$ and we have reached $\calF(\gamma_Q)[\gamma_P]$. 
See \reffig{ReplaceSnakeFinal2-0}.
We retract the snake $\eta(\gamma_P)$ by a sequence of 2-0 moves, following the reverse of the process for building a snake.  
This completes \refalg{ReplaceSnake}.
\end{algorithm}

\begin{figure}[htbp]
\subfloat[Before the isotopy.]{
\includegraphics[width = 0.28\textwidth]{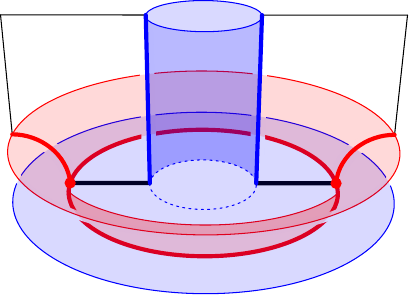}
\label{Fig:ReplaceSnake4Before}
}
\quad 
\subfloat[After the isotopy and before the 2-0 move.]{  
\includegraphics[width = 0.28\textwidth]{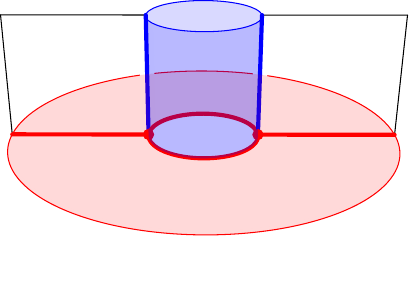}
\label{Fig:ReplaceSnake5}
}
\quad
\subfloat[After the 2-0 move.]{  
\includegraphics[width = 0.28\textwidth]{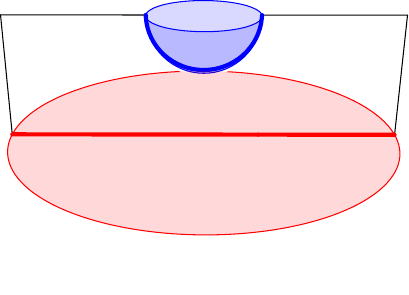}
\label{Fig:ReplaceSnake6}
}
\caption{When only the root of $T^P$ is left, the region $Q$ covers almost all of $\bdy D$. 
\reffig{ReplaceSnake4Before} shows the last part still uncovered.
An isotopy shrinks the intersection of $P$ and the ball $D$ to a small neighbourhood of the bigon $b_P$ (see \reffig{ReplaceSnake5}).
A single 2-0 move removes the last contact between $P$ and the ball $D$.
The result is shown in \reffig{ReplaceSnake6}.
}
\label{Fig:ReplaceSnakeFinal2-0}
\end{figure}

Note that the combinatorics of $\bdy D$ are identical before and after replacing one snake with the other, except possibly for where the snakes meet $D$. 

\section{Traversing foams with snakes}
\label{Sec:TraversingWithSnakes}

The goal of this section is to prove the following.

\begin{theorem}
\label{Thm:ConnectivityWith0-2}
Suppose that $M$ is a compact, connected three-manifold.
Suppose that $L$ is a labelling of $\Delta_M$ with infinite image.
Then the set of $L$--essential ideal triangulations of $M$ is connected via 2-3, 3-2, 0-2, and 2-0 moves.
\end{theorem}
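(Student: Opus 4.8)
The plan is to take the sequence of moves supplied by \refcor{ConnectivitySimplicial} and surgically remove every bubble and reverse bubble move from it, paying for each excision with a controlled number of 0-2 and 2-0 moves, all the while staying inside the class of $L$--essential foams. I work with dual foams throughout. Since $L$ has infinite image the set $\Delta_M$ is infinite, so $M$ has non-empty boundary and \refcor{ConnectivitySimplicial} applies. Let $\calF$ and $\calF'$ be the ($L$--essential, material-region-free) foams dual to two given $L$--essential ideal triangulations of $M$. By \refcor{ConnectivitySimplicial} there is a finite sequence $\calF = \calH_0, \calH_1, \dots, \calH_n = \calF'$ of $L$--essential partially ideal foams in which each consecutive pair differs by a 2-3 move, a bubble move, or an inverse of one of these. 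The entire difficulty is that a bubble move creates a material region while none of our four permitted moves does; the number of material regions in the $\calH_i$ therefore fluctuates and must be shadowed without ever actually appearing.

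To shadow it, I would construct in parallel a second sequence $\calG_0, \dots, \calG_n$ of $L$--essential foams, \emph{every one of which has no material regions}, with $\calG_0 = \calF$ and $\calG_n = \calF'$, and with consecutive terms joined by finitely many 2-3, 3-2, 0-2, and 2-0 moves through $L$--essential foams. The link between the two sequences is a system of snakes: at stage $i$ one carries a family of pairwise disjoint $L$--self-avoiding snake paths in $\calH_i$, one for each material region $D$ of $\calH_i$, each targeting its region $D$ with some chosen source label, and one sets $\calG_i$ to be $\calH_i$ inflated (in the sense of \refdef{InflatingSnakePath}) along all of these snake paths simultaneously. By \reflem{EssentialSnake} each $\calG_i$ is $L$--essential, and since inflating along a snake path merges a material region into a peripheral one, $\calG_i$ indeed has no material regions. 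The hypothesis that $L(\Delta_M)$ is infinite is used exactly here: whenever a move of the $\calH$--sequence spawns a new material region there is a label $\ell$ distinct from the finitely many labels on the regions around it, and feeding the snake paths already in play to \refalg{CreateSnakePath} as the avoidance set (using \refrem{StronglyCreateSnakePath}) produces a new $L$--self-avoiding snake path disjoint from all the others; when two snakes would otherwise collide one re-routes them using \refalg{ReplaceSnake}.

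It then remains to translate each elementary move $\calH_i \to \calH_{i+1}$ into a legal sequence of moves $\calG_i \to \calG_{i+1}$. A bubble move creating $D$ is replaced by choosing $\ell$ and a disjoint snake path to $D$ via \refalg{CreateSnakePath} and then \emph{building} the corresponding snake out of a peripheral region along that path; by \refdef{BuildSnake} and \refrem{BuildSnake} this combination --- a bubble together with the inflation of a fresh snake out of the new region --- is realised by 0-2 moves, and by \refrem{BuildSnakeLEssential} it stays $L$--essential. A reverse bubble move is replaced by the reverse operation: one retracts the corresponding snake by 2-0 moves. A 2-3 or 3-2 move of $\calH$ along an edge disjoint from all the snake tubes is copied verbatim on $\calG$. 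A 2-3 or 3-2 move of $\calH$ along an edge meeting a snake tube, or one taking place on the boundary of a material region, is replaced by first isotoping the offending snake(s) out of the way --- equivalently, reorganising the snake's terminal bulb where $D$ sat --- using the slide moves of \refsec{SnakeBasicMoves} and the finger moves of \refsec{FingerMoves}, each of which is realised by 2-3, 3-2, 0-2, and 2-0 moves, and only then performing the move itself. Stringing all of this together and dualising, the ideal triangulations dual to $\calG_0 = \calF$ and $\calG_n = \calF'$ are connected via 2-3, 3-2, 0-2, and 2-0 moves, as desired.

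I expect the main obstacle to be the global, simultaneous control of all the snakes. One must check that the snake tubes can always be kept pairwise disjoint (this is the role of \refalg{ReplaceSnake} and of the avoidance set in \refalg{CreateSnakePath}), that a move of $\calH$ near a material region really does correspond, after inflation, to a sequence of slide and finger moves near the snake's bulb, and --- most delicately --- that no $L$--inessential face is ever created along the way. This last point is not automatic: slide moves A and B can introduce $L$--inessential faces in general, as flagged in \refsec{SnakeBasicMoves}, so one has to verify that the particular slide moves invoked here, applied to snakes whose source labels were chosen afresh (using the infinitude of $L(\Delta_M)$) and which are $L$--self-avoiding, keep every intermediate foam $L$--essential. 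I expect that verification to be a localised rerun of the proof of \reflem{EssentialSnake}, together with the observation recorded at the end of \refsec{FingerMoves} that an H finger move creates no $L$--inessential face when none is present immediately before or after it.
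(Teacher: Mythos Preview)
Your strategy is close in spirit to the paper's but structurally different in one key respect: you propose to shadow \emph{all} material regions at once with a family of pairwise disjoint snakes, producing the output sequence $(\calG_i)$ in a single pass. The paper instead eliminates the bubble moves \emph{one at a time}: it picks a single bubble/reverse-bubble pair creating and destroying a region $Z$, tracks exactly one snake (targeting $Z$) through the portion of the sequence between them, and outputs a new sequence with one fewer bubble move --- which now contains 0-2 and 2-0 moves and must be fed back into the same procedure. The trade-off is real: your single-pass approach never has to process 0-2 or 2-0 moves in the input sequence (since \refcor{ConnectivitySimplicial} produces none), whereas the paper devotes Sections~6.3 and~6.5 to clearing a snake off a 0-2 path or a 2-0 bigon precisely because its recursion introduces them. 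Conversely, the paper never has more than one snake in play, so it never has to worry about snakes interfering with each other, whereas this is exactly the obstacle you flag at the end.

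Two points in your sketch are thinner than they look. First, the relabelling step: when a 2-3 move brings a material region $N$ into contact with a peripheral region $S$ whose label happens to coincide with the snake-induced label on $N$, the paper does not merely slide the snake aside --- it \emph{replaces} the snake targeting $N$ by one with a freshly chosen source label (Sections~6.3.1 and~6.4), and this is where \refalg{ReplaceSnake} is actually used. Your invocation of \refalg{ReplaceSnake} ``when two snakes would otherwise collide'' is not what that algorithm does; it swaps two snakes targeting the \emph{same} region. Second, the reverse bubble is not just ``retract by 2-0 moves'': the paper's Section~6.8 first clears the snake off two of the three bigons of the bubble using finger moves and the slide move $\mathrm{A}^{-1}$, and only then unbuilds. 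In your multi-snake setting both of these steps become harder, because clearing one snake off an edge or bigon via finger moves may push it across another snake, and you would need an inductive argument (or a careful ordering) to show this can always be resolved. None of this looks fatal, but it is where the work is, and the paper's one-snake-at-a-time recursion is precisely the device that sidesteps it.
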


Here is the overall plan.
Suppose that $\calT$ and $\calT'$ are $L$--essential ideal triangulations of $M$.
We take the sequence of triangulations $(\calT_i)_{i=0,\ldots n}$ given by \refcor{ConnectivitySimplicial} and modify it.
Let $(\calF_i)_{i=0,\ldots n}$ be the sequence of foams dual to $(\calT_i)$.
We produce a new sequence $(\calF'_{i'})_{i'=0,\ldots n'}$ of $L$--essential foams with the following properties.

\begin{itemize}
\item $\calF'_0 = \calF_0$,
\item $\calF'_{n'} = \calF_n$, 
\item $\calF'_{i'+1}$ is obtained from $\calF'_{i'}$ by a 2-3, 3-2, 0-2, 2-0, bubble, or reverse bubble move, and
\item The sequence $(\calF'_{i'})$ has one fewer bubble move than the sequence $(\calF_i)$.
\end{itemize}

If $(\calF'_{i'})$ has no bubble moves then we are done. 
Otherwise we run the above algorithm again starting with $(\calF'_{i'})$.
This eventually gives a sequence starting and ending at the same foams as $(\calF_i)$ and which uses no bubble moves, proving \refthm{ConnectivityWith0-2}.

In order to produce $(\calF'_{i'})$ from $(\calF_i)$, we choose any index $p$ so that $\calF_{p}$ is obtained from $\calF_{p-1}$ by applying a bubble move.
Suppose that the material region $Z$ created by this bubble move is removed by the reverse bubble move between $\calF_q$ and $\calF_{q+1}$.
(The index $q$ is well-defined since the final foam $\calF_n$ has no material regions.)
For $i < p$ we take $\calF'_i = \calF_i$.
In this section we construct the middle of $(\calF'_{i'})$, corresponding to the sequence $(\calF_i)_{i=p,\ldots,q}$.
We take the suffix of $(\calF'_{i'})$ to be equal to the corresponding suffix of $(\calF_i)$, with appropriate reindexing.

\subsection{Notation}

To keep track of the middle section of $(\calF'_{i'})$ we introduce the following notation.
Suppose that we have a snake path $(\gamma_{i,j}) \subset \calF_i^{(2)}$.
Here $i$ ranges between $p$ and $q$ and $j$ ranges between $0$ and some $J_i$.
Let $\calF_{i,j} = \calF_i(\gamma_{i,j})$ be the result of inflating $\calF_i$ along $\gamma_{i,j}$.

In \refsec{BubbleMove} we show how to move from $\calF'_{p-1} = \calF_{p-1}$ to $\calF'_{p'} = \calF_{p,0}$ (for some appropriate index $p'$) using only 2-3, 3-2, 0-2, and 2-0 moves.
(In this section, we also deal with any bubble move that does not create the material region $Z$ we are removing in this iteration of the algorithm.)
In Sections~\ref{Sec:0-2Move} through~\ref{Sec:3-2Move}, we show how to move from  $\calF_{i,j}$ to $\calF_{i,j+1}$, again using only 2-3, 3-2, 0-2, and 2-0 moves.
We similarly show how to move from $\calF_{i,J_i}$ to $\calF_{i+1,0}$.
(Note that everything is contained in $M$ so it makes sense to talk about snake paths living in $\calF_i$ also living in $\calF_{i+1}$.)
Finally, in \refsec{ReverseBubbleMove} we show how to move from $\calF_{q,0}$ to $\calF_{q+1}$.
(In this section, we also deal with any reverse bubble moves that do not destroy the material region $Z$.)

\subsection{Bubble move}
\label{Sec:BubbleMove}
Suppose that the bubble move taking $\calF_{i}$ to $\calF_{i+1}$ occurs on the edge $e$.
First assume that $i = p - 1$.
Here we must create the snake path $\gamma_{p,0}$.
Choose a lift $\cover{e}$ of $e$. 
By hypothesis, $L(\Delta_M)$ is infinite, so we may choose a label $\ell \in L(\Delta_M)$ different from any label on the three complementary regions that meet the interior of $\cover{e}$.

Let $D$ be the material region in the complement of $\cover\calF_{p}$ that is created by the bubble move and splits $\cover{e}$ in two.
Note that no translate of $D$ shares a face with $D$.
We now apply \refalg{CreateSnakePath} with foam $\calF_{p}$, with material region $D$, label $\ell$ (using \refrem{StronglyCreateSnakePath}), and with $\calA$ the three faces of the image $\phi(D)$ under the covering map $\phi$.

Let $\gamma$ be the resulting $L$--self-avoiding snake path in $\calF_{p}$.
Since $\gamma$ is disjoint from $\phi(D)$ other than at its terminal point, $\gamma$ lies in $\calF_{p-1}$.
Let $\gamma'$ be the $L$--self-avoiding snake path in $\calF_{p-1}$ formed by adding to $\gamma$ a small segment that connects within $\phi(D)$ from the end of $\gamma$ to a point of $e$.
See \reffig{BubbleTruncate}.
We take $\gamma_{p,0} = \gamma$.
(Note that for the bubble move we do not use the $j$ index.)

To get from $\calF_{p-1}$ to $\calF_{p,0} = \calF_p(\gamma_{p,0}) = \calF_{p-1}[\gamma']$ we build along the snake path $\gamma'$ as in \refdef{BuildSnake}.
\reffig{BuildSnakeBubble} shows the result of the last 0-2 move at the end of $\gamma'$.
Here we draw it in a different style, showing it as the result of inflating $\calF_{p}$ along $\gamma_{p,0}$, with the bubble region shaded in blue.
By \refrem{BuildSnakeLEssential} every foam generated in our sequence is $L$--essential.

\begin{figure}[htbp]
\subfloat[We extend $\gamma$ by a small segment inside of the bubble to make $\gamma'$.]{
\includegraphics[width = 0.45\textwidth]{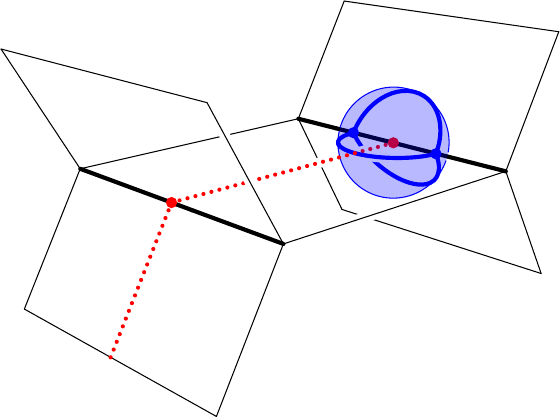} 
\label{Fig:BubbleTruncate}
}
\quad
\subfloat[The last step of building a snake, interpreted as creating a bubble.]{
\includegraphics[width = 0.45\textwidth]{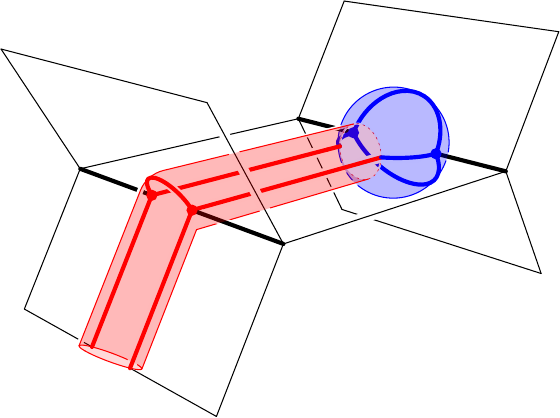} 
\label{Fig:BuildSnakeBubble}
}
\caption{}
\label{Fig:BuildSnake3}
\end{figure}

Next we deal with the case that $p \leq i < q$. 
Thus the bubble move, applied to $\calF_i$, introduces a material region in this iteration of the algorithm.
By an isotopy of the bubble in $\calF_{i+1}$ we may assume that the bubble does not intersect $\gamma_{i,0}$.
This allows us to set $\gamma_{i+1,0} = \gamma_{i,0}$. 
The bubble move taking $\calF_{i}$ to $\calF_{i+1}$ also takes $\calF_{i,0}$ to $\calF_{i+1,0}$.

\subsection{0-2 move}
\label{Sec:0-2Move}

Suppose that applying a 0-2 move along a path $\delta$ in a face $f$ of $\calF_i$ produces $\calF_{i+1}$.
(Note that the original sequence provided by \refcor{ConnectivitySimplicial} has no 0-2 or 2-0 moves, but later sequences in our recursion will.)
Let $\cover{\delta}$ be a lift of $\delta$ to $\cover{M}$.
Let $\cover{f}$ be the lift of $f$ containing $\cover{\delta}$.
See \reffig{0-2}.
There are four regions of $\cover{M} - \cover{\calF}_i$ incident to $\cover{\delta}$: two meet its interior and two meet only its endpoints.
Let $N$ and $S$ be the regions at the endpoints.
Note that they come into contact in the $L$--essential foam $\calF_{i+1}$.
If $N$ and $S$ are both peripheral then we deduce that $L(N) \neq L(S)$.
If $N$ and $S$ are both material then, by $L$--essentiality, $\phi(N) \neq \phi(S)$.
Only a material region that projects to $Z$ has a label.
Now suppose (breaking symmetry) that $N$ is material and $S$ is peripheral.
Here, it is possible that $L_{\gamma_{i,0}}(N) = L(S)$ (recall the notation given in \refdef{LabelledSnakePath}).
They are equal if and only if we have the following:
\begin{itemize}
\item the image $\phi(N) = Z$ is (in $\calF_{i}$) the target for $\gamma_{i,0}$ and
\item the lift $\cover{\gamma}_{i,0}$ that has $N$ as its target has $L_{\gamma_{i,0}}(\cover{\gamma}_{i,0}) = L(S)$.
\end{itemize}

Suppose further that $\gamma_{i,0}$ is disjoint from $\delta$. 
(As we will see, this can be arranged by appropriately modifying $\gamma_{i,0}$.)
Then bringing $N$ and $S$ together with the 0-2 move would create an $L$--inessential face in the inflated foam $\calF_{i+1}(\gamma_{i,0})$. 
To avoid this we ``relabel'' $N$ as follows. 

\subsubsection{Replacing a snake path}
\label{Sec:ReplaceSnakePath}

By hypothesis, $L(\Delta_M)$ is infinite, so we may choose a label $\ell \in L(\Delta_M)$ different from $L(\cover{\gamma}_{i,0})$ and different from the label of any complementary region that meets $N$.

Note that no translate of $N$ shares a face with $N$.
We apply \refalg{CreateSnakePath} with foam $\calF_i$, with material region $N$, with label $\ell$ (using \refrem{StronglyCreateSnakePath}), and with $\calA = \{\gamma_{i,0}\}$.
This returns an $L$--self-avoiding snake path $\gamma_\ell$ which is disjoint from $\gamma_{i,0}$.
We take $\gamma_{i,1} = \gamma_\ell$. 
Replacing $\gamma_{i,0}$ with $\gamma_{i,1}$ has the effect of changing the label on $N$ from $L_{\gamma_{i,0}}(N) = L_{\gamma_{i,0}}(\cover{\gamma}_{i,0}) = L(S)$ to $\ell \neq L(S)$. 
\refalg{ReplaceSnake} then describes how to connect $\calF(\gamma_{i,0})$ to $\calF(\gamma_{i,1})$.

\subsubsection{Clearing $\delta$}
\label{Sec:ClearPath}

Recall that $\delta$ is the path on which the 0-2 move is applied.
Recall that $\cover{\delta}$ is its chosen lift.
Recall that $N$ and $S$ are the two regions meeting the endpoints of $\cover{\delta}$.
By \refsec{ReplaceSnakePath} we may assume that the labels of $N$ and $S$ (if they exist) are distinct.
If at this point the snake path $\gamma$ (either $\gamma_{i,0}$ or $\gamma_{i,1}$) does not meet $\delta$ then we can apply the 0-2 move.
If not then we recursively clear $\gamma$ off of $\delta$ as follows.

Choose $X$ to be one of $N$ or $S$.
Suppose that $x$ is the endpoint of $\cover{\delta}$ meeting $X$.
See \reffig{FingerA1}.
We choose a small neighbourhood of $x$ in one of the two faces of $X$ containing $x$.
We push the small neighbourhood along $\cover{\delta}$.
Let $\cover{\gamma}$ be the lift of the snake path that meets $\cover{\delta}$ closest to $x$.
If the label of $X$ (if it exists) and the label of $\cover{\gamma}$ are different then 
we may perform the finger move, obtain \reffig{FingerA2}, and $\gamma$ remains $L$--self-avoiding. 
On the other hand, if $L(X) = L(\cover{\gamma})$ then we truncate the snake path $\gamma$ as follows.


\subsubsection{Truncating a snake path}
\label{Sec:TruncateSnake}

\begin{figure}[htbp]
\subfloat[]{
\labellist
\hair 2pt \small
\pinlabel $X$ [r] at 20 148
\pinlabel $x$ [br] at 93 115
\pinlabel $\cover{\gamma}$ [tl] at 119 85
\pinlabel $\cover{\delta}$ [bl] at 209 85
\endlabellist
\includegraphics[width = 0.31\textwidth]{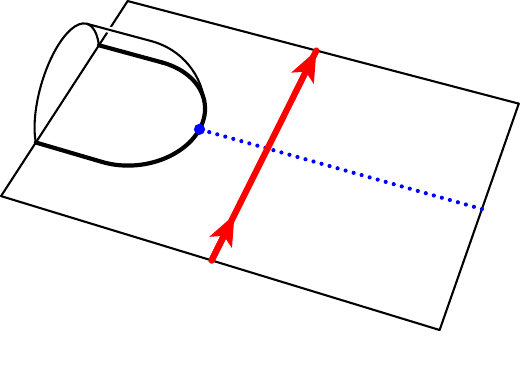}
\label{Fig:Reroute0-2Before}
}
\subfloat[End $\cover{\gamma}$ early.]{
\labellist
\hair 2pt \small
\pinlabel $X$ [r] at 20 148
\pinlabel $x$ [br] at 193 86
\pinlabel $\cover{\gamma}_-$ [tr] at 113 52
\pinlabel $\cover{\delta}$ [bl] at 209 85
\endlabellist
\includegraphics[width = 0.31\textwidth]{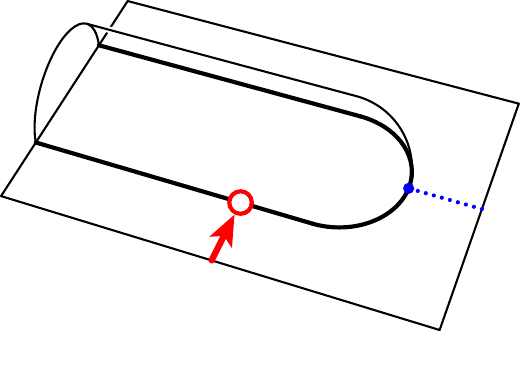}
\label{Fig:Reroute0-2_1}
}
\subfloat[Start $\cover{\gamma}$ late.]{
\labellist
\hair 2pt \small
\pinlabel $X$ [r] at 20 148
\pinlabel $x$ [br] at 193 86
\pinlabel $\cover{\gamma}_+$ [bl] at 145 159
\pinlabel $\cover{\delta}$ [bl] at 209 85
\endlabellist
\includegraphics[width = 0.31\textwidth]{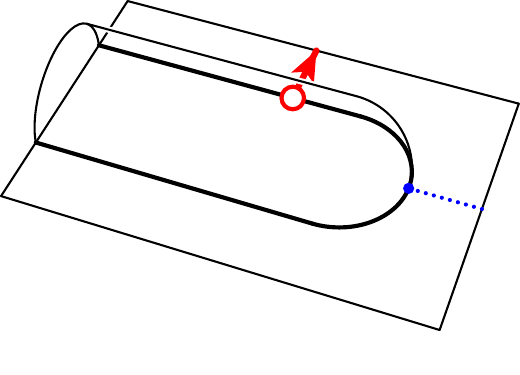}
\label{Fig:Reroute0-2_2}
}
\caption{Ways to truncate a snake path $\cover{\gamma}$ instead of having it meet a region with the same label as itself.}
\label{Fig:Reroute0-2}
\end{figure}

There are two cases as $X$ is either material or peripheral.

\begin{itemize}
\item
Suppose that $X$ is material.
We replace $\cover\gamma$ essentially with a prefix of itself.
This is illustrated in \reffig{Reroute0-2_1}; here are the details.
Let $\cover\gamma'$ be the snake path produced by applying the $A$ move to slide the finger onto $\cover\gamma$. 
Let $\cover\gamma_-$ be the prefix of $\cover\gamma'$ whose lift ends where it meets $X$. 
By isotoping $\cover\gamma_-$ slightly to one side of $\cover\gamma$ we make them disjoint.
The snake path $\gamma_- = \phi(\cover\gamma_-)$ is, up to a small isotopy, a prefix of the $L$-self-avoiding snake path $\gamma$. 
Thus $L(\cover\gamma_-) = L(\cover{\gamma})$ is distinct from the labels of regions meeting the interior of $\cover\gamma_-$.
Moreover, this label is distinct from regions meeting $X$.
Thus $\gamma_-$ is $L$--self-avoiding.

Since $L(\Delta_M)$ is infinite we may choose a label $\ell \in L(\Delta_M)$ different from the label of $\cover\gamma$ and of any complementary region that meets $X$.
Note that no translate of $X$ shares a face with $X$.
We apply \refalg{CreateSnakePath} with the current foam, with material region $X$, with label $\ell$ (using \refrem{StronglyCreateSnakePath}), and with $\calA = \{\gamma, \gamma_-\}$.
This produces an $L$--self-avoiding snake path $\gamma_\ell$ disjoint from $\calA$.

With $\gamma_\ell$ in hand, we replace $\gamma$ by $\gamma_\ell$ using \refalg{ReplaceSnake}.
This done, we replace $\gamma_\ell$ by $\gamma_-$, again using \refalg{ReplaceSnake}.
This achieves the desired truncation.

\item 
Suppose that $X$ is peripheral.
In this case we replace $\cover\gamma$ essentially with a suffix of itself.
This suffix, $\cover\gamma_+$ say, starts on $X$ (or possibly a $\Stab(L(X))$ translate of $X$).
This is illustrated in \reffig{Reroute0-2_2}.
This case is very similar to the first.
\end{itemize}

\subsubsection{Applying the 0-2 move}
\label{Sec:Apply0-2}

In all cases, each finger or truncation move we use advances the $j$ index, giving the sequence $(\gamma_{i,j})_{j = 0,\ldots,J_i}$. 
The foams $\calF_{i,j}$ are related to each other by sequences of 2-3, 3-2, 0-2, and 2-0 moves.
As $j$ increases, the number of intersections between $\delta$ and $\gamma_{i,j}$ decreases.
After a finite number of moves we have cleared all snake paths from ${\delta}$.

At this point we implement the 0-2 move along $\delta$ that changes $\calF_i$ into $\calF_{i+1}$.
We do this in a neighbourhood of $\delta$ small enough to avoid the current snake path $\gamma_{i,J_i}$. 
This allows us to set $\gamma_{i+1,0} = \gamma_{i,J_i}$.
Finally, the same 0-2 move takes the foam $\calF_{i,J_i}$ to the foam $\calF_{i+1,0}$. 

\subsection{2-3 move}
\label{Sec:2-3Move}

Suppose that applying a 2-3 move to edge $e$ of $\calF_i$ produces $\calF_{i+1}$.
Let $\cover{e}$ be a lift of $e$ to $\cover{M}$.
See \reffig{2-3}.
There are five regions of $\cover{M} - \cover{\calF}_i$ incident to $\cover{e}$: three meet its interior and two meet only its endpoints. 
Let $N$ and $S$ be the regions at the endpoints.

As in \refsec{0-2Move}, and up to breaking symmetry, there are three cases depending on the materiality or peripherality of $N$ and $S$. 
Again, in the case that one is material and one is peripheral it is possible that they share the same label.
If so then we replace the label on the material region, exactly as we did in \refsec{ReplaceSnakePath}.

Suppose now that no snake paths meet $\cover{e}$.
Then we simply perform the 2-3 move.

Otherwise, we must first clear $\cover{e}$ of snake paths.
This is very similar to the process of clearing the path for a 0-2 move, before performing the 0-2 move, as discussed in \refsec{ClearPath}.
Again we choose $X$ to be one of $N$ or $S$.
We choose one of the three face corners incident to the $X$ end of $\cover{e}$ and push it towards the other end of $\cover{e}$.
When we meet a lift $\cover{\gamma}$ of the snake path, we use finger moves (as in \refsec{FingerMoves}) or truncate the snake (similar to the method of \refsec{TruncateSnake}).
Here, instead of the one-finger move A, there are four different finger moves, H, V, EH, and EV, depending on
\begin{itemize}
\item
whether a snake path crosses or ends on $\cover{e}$ and 
\item
how the snake path is arranged relative to the finger.
\end{itemize}
See \reffig{FingerMoves}.

If we apply move H or V then it is possible that the lift $\cover\gamma$ and $X$ share the same label.
In these cases we truncate the snake path essentially as described in \refsec{TruncateSnake}.
If we apply move EH or EV then the label on the snake path $\cover\gamma$ cannot match the label on $X$.
This is because the region connected to the end of the snake path already meets $X$ along a face of the foam and by induction the foam is $L$--essential.

This constructs the sequence of snake paths $(\gamma_{i,j})_{j = 0,\ldots,J_i}$ as before.
The remainder of the construction is the same as in \refsec{Apply0-2}.

\subsection{2-0 move}
\label{Sec:2-0Move}

Suppose that applying a 2-0 move to the bigon $b$ of $\calF_i$ produces $\calF_{i+1}$.
Let $\cover{b}$ be a lift of $b$ to $\cover{M}$.
All regions around $\cover{b}$ have distinct (or no) labels; this is because they are all in contact with each other in $\cover{\calF}_i$.
Suppose that $\gamma = \gamma_{i,0}$ is the current snake path.
If $\gamma$ is disjoint from $b$ then we can apply the 2-0 move.
If not then we must clear $\cover{b}$ of snake paths.
Let $e_1$ and $e_2$ be the edges of $b$.
We first clear $\gamma$ (incrementing $j$) off of $e_1$ using exactly the same process as in \refsec{2-3Move}.

Next, we perform finger moves (possibly with truncations) along $e_2$ as shown in \reffig{ClearBigon}.
In particular, we push the finger from a face that shares an edge with $b$.
When we perform an EV finger move (as in \refsec{FingerMoves}), we have a choice of which side of the finger to move the endpoint of $\gamma$ to.
We always push the endpoint off of $b$. 
(Note that a neighbourhood of $b$ is an embedded three-ball because we are about to perform a 2-0 move.)

\begin{figure}[htbp]
\subfloat[]{
\labellist
\hair 2pt \small
\pinlabel $e_1$ [r] at 101 110
\pinlabel $b$ at 154 105
\pinlabel $e_2$ [l] at 207 100
\endlabellist
\includegraphics[width = 0.43\textwidth]{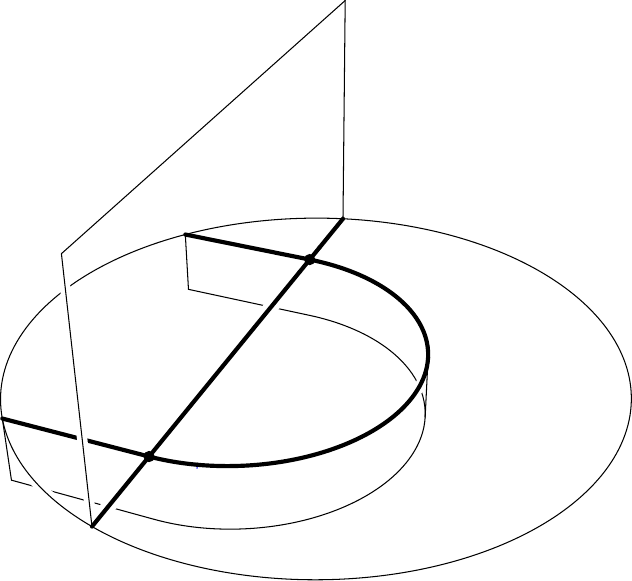}
\label{Fig:ClearBigon1}
}
\quad
\subfloat[]{
\labellist
\hair 2pt \small
\pinlabel $b$ at 149 105
\endlabellist
\includegraphics[width = 0.43\textwidth]{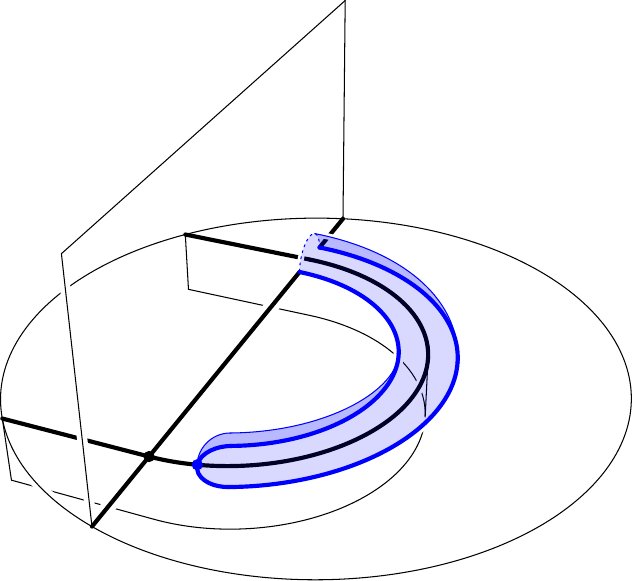}
\label{Fig:ClearBigon2}
}
\caption{We use a sequence of finger moves to clear a bigon.}
\label{Fig:ClearBigon}
\end{figure}

We continue doing finger moves (perhaps with truncations) until we have passed all intersections of $\gamma$ with $e_2$.
\begin{claim*}
At this point neither endpoint of $\gamma$ is on $b$.
\end{claim*}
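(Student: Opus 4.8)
The plan is to reduce the claim to two statements---that $\gamma$ has no endpoint on $e_1$, and that it has no endpoint on $e_2$---and then to read each off from the construction, keeping track of how the finger moves and truncations that clear $e_2$ move the endpoints of $\gamma$. First I would record that, by \refdef{SnakePath}, the endpoints of a snake path lie in the interiors of edges of $\calF_i^{(1)}$; in particular no endpoint of $\gamma$ is a vertex of $\calF_i$ or lies in the interior of a two-cell. Since $\closure{b} \cap \calF_i^{(1)}$ is contained in $\closure{e_1} \cup \closure{e_2}$, and an endpoint of $\gamma$ is never a vertex, an endpoint of $\gamma$ lies on $\closure{b}$ if and only if it lies on $e_1$ or on $e_2$. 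So it suffices to show that, after the clearing just described, $\gamma$ has no endpoint on $e_1$ and none on $e_2$.

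The statement for $e_2$ is immediate: the finger moves (with truncations) along $e_2$ were performed precisely until every intersection of $\gamma$ with $e_2$ had been removed, so in particular no endpoint of $\gamma$ remains on $e_2$. For $e_1$, I would first note that the clearing of $e_1$ carried out before any work on $e_2$ left no endpoint of $\gamma$ on $e_1$, and then check that clearing $e_2$ cannot move an endpoint back there. The moves used to clear $e_2$ come in three kinds. The H and V finger moves, and the auxiliary $A$ moves inside truncations, are supported in a small ball about the part of $e_2$ being crossed; since the intersection points of $\gamma$ with $e_2$ may be taken in the interior of $e_2$, away from the vertices of $b$ at which $e_1$ meets $e_2$, this ball is disjoint from $e_1$, so these moves move no endpoint onto $e_1$. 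The EH and EV finger moves do move the endpoint of $\gamma$ lying on $e_2$, but by construction we always push that endpoint off of $b$; here one uses that a neighbourhood of $b$ is an embedded three-ball, so the side of the finger off $b$ is well-defined, and being off $b$ the endpoint is in particular off $e_1$. Finally, a truncation (via \refalg{ReplaceSnake}, as in \refsec{TruncateSnake}) replaces $\gamma$ by a prefix or a suffix of itself: the surviving endpoint of $\gamma$ is unchanged and was already off $e_1$, while the single new endpoint sits where the truncated path meets the region being avoided, namely where the slid finger met $\gamma$---on a small pushoff of the interior of $e_2$, hence not on $e_1$. The intermediate snake paths $\gamma_\ell$ produced inside \refalg{ReplaceSnake} need not be examined, since the claim concerns only the snake path reached at the end of the $e_2$-clearing. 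Combining these observations, after clearing $e_2$ no endpoint of $\gamma$ lies on $e_1$ either, which together with the first paragraph proves the claim.

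The step I expect to be the main obstacle is precisely this bookkeeping for $e_1$: making it precise that the $e_2$-clearing is local---local to the interior of $e_2$ for the H, V, and $A$ moves, and otherwise only relocating endpoints to places disjoint from $e_1$. The resolution rests on two structural facts: $e_1$ and $e_2$ share only the vertices of $b$, which are never met by $\gamma$; and a neighbourhood of $b$ is a three-ball, so the ``off $b$'' choices in the EV (and EH) finger moves are well-defined and automatically exclude $e_1$.
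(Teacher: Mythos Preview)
Your approach differs from the paper's in a substantive way. You decompose the claim into ``no endpoint on $e_1$'' and ``no endpoint on $e_2$'' and then track endpoints through each move type by a locality argument. The paper instead argues by contradiction: it first observes that the only way an endpoint could land on $b$ is via truncation, and then shows that such an endpoint would sit on the edge where $b$ meets the finger region $R$; since truncation was triggered precisely because $L(R) = L(\cover{\gamma})$, and every arc of $\gamma$ on $b$ enters and exits through that same edge (hence into $R$), the snake path would re-enter a region carrying its own label, contradicting $L$--self-avoidance.

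The genuine gap in your argument is the truncation step. You assert that the new endpoint ``sits where the truncated path meets the region being avoided, namely where the slid finger met $\gamma$---on a small pushoff of the interior of $e_2$, hence not on $e_1$.'' But this is not what the construction gives: following \refsec{TruncateSnake}, the truncated path $\gamma_-$ (or $\gamma_+$) ends on an edge of $\calF$ bounding the finger region $X$, and you supply no reason why that edge cannot be an edge of $b$. Your supporting claim that the H and V moves are ``supported in a small ball about the part of $e_2$ being crossed'' and hence ``disjoint from $e_1$'' is also not right as stated: these finger moves push the crossing past the vertex of $b$ at the $X$ end of $e_2$, so they are not local to the interior of $e_2$, and the edge on which the truncation endpoint lands depends on that vertex. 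The paper's $L$--self-avoidance contradiction is exactly what is needed to dispose of this possibility, and your bookkeeping does not replace it.
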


\begin{proof}
By our choice of face to push the finger from, after an EH finger move, the endpoint of $\gamma$ is no longer in contact with $b$.
Given our choice during each EV finger move, the only way that an endpoint of $\gamma$ could end up on $b$ is if it is there as a result of truncating the snake path, as in \refsec{TruncateSnake}. 
However, every segment of $\gamma$ enters and exits $b$ on the edge where $b$ meets the finger (see \reffig{ClearBigon2}).
Suppose for a contradiction that there is an endpoint $x$ of $\gamma$ on this edge.
Let $\cover{x}$ be the lift of $x$ meeting $\cover{b}$.
Let $\cover{\gamma}$ be the lift of $\gamma$ containing $\cover{x}$.
Let $R$ be the region containing the finger meeting $\cover{x}$.
Thus the label of $R$ agrees with the label on $\cover{\gamma}$.
Since $\gamma$ reenters the finger, we find that $\gamma$ is not $L$--self-avoiding.
This contradicts the fact that 
the snake paths produced by finger moves (perhaps with truncations) are always $L$--self-avoiding.
\end{proof}

Therefore $\gamma$ meets $b$ along a disjoint collection of arcs with no endpoint of $\gamma$ meeting $b$, and with all endpoints of arcs meeting the same edge of $b$. 
Each of these arcs can be removed using a slide move $\textrm{A}^{-1}$ (see \refsec{MoveA}), applied to outermost arcs first.
Once $\gamma$ has been cleared off of $b$ we apply the 2-0 move.

\subsection{3-2 move}
\label{Sec:3-2Move}

Suppose that applying a 3-2 move to the triangular face $f$ of $\calF_i$ produces $\calF_{i+1}$.
Let $\cover{f}$ be a lift of $f$ to $\cover{M}$.
All regions around $\cover{f}$ have distinct labels because they are all in contact with each other in $\cover{\calF}_i$.
If $\gamma$ is disjoint from $f$ then we can apply the 3-2 move.
If not then we must clear $\cover{f}$ of snake paths.
The process is very similar to the process of clearing the bigon in preparation for a 2-0 move (see \refsec{2-0Move}). 
Let $e_1$, $e_2$, and $e_3$ be the edges of $f$.
We clear $\gamma$ off $e_1$ using the same process as in \refsec{2-3Move}.
Next, we perform finger moves along both $e_2$ and $e_3$, as shown in \reffig{ClearTriangle}.
We push endpoints of $\gamma$ off of $f$ if we make any EV finger moves.
(Note that $f$ is embedded because we are about to perform a 3-2 move.)

\begin{figure}[htbp]
\subfloat[]{
\labellist
\hair 2pt \small
\pinlabel $R$ at 65 220
\pinlabel $e_1$ [r] at 111 140
\pinlabel $f$ at 145 128
\pinlabel $e_2$ [t] at 145 95
\pinlabel $e_3$ [l] at 188 150
\endlabellist
\includegraphics[width = 0.43\textwidth]{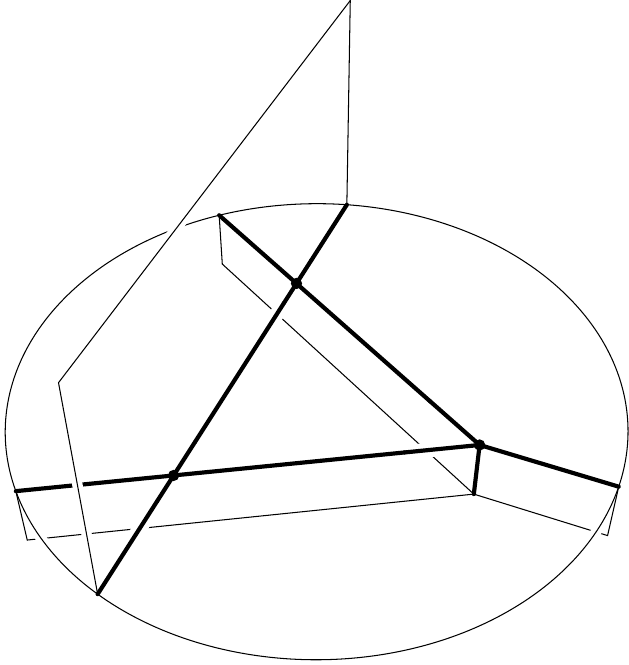}
\label{Fig:ClearTriangle1}
}
\quad
\subfloat[]{
\labellist
\hair 2pt \small
\pinlabel $R$ at 65 220
\pinlabel $f$ at 139 133
\endlabellist
\includegraphics[width = 0.43\textwidth]{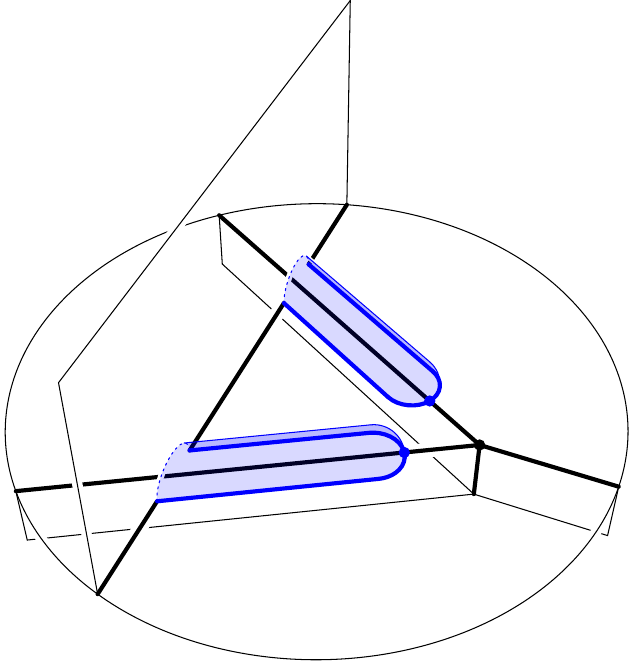}
\label{Fig:ClearTriangle2}
}
\caption{We use a sequence of finger moves to clear a triangle.}
\label{Fig:ClearTriangle}
\end{figure}

Once we have pushed the fingers past all intersections of $\gamma$ with $e_2$ and $e_3$, neither endpoint of $\gamma$ meets $f$, and all endpoints of intervals of $\gamma$ on $f$ meet the same edge of $f$.
We now repeatedly apply slide move $\textrm{A}^{-1}$ to clear $\gamma$ off of $f$.
We then apply the 3-2 move.

\subsection{Reverse bubble move}
\label{Sec:ReverseBubbleMove}

Suppose that applying a reverse bubble move to $\calF_i$ produces $\calF_{i+1}$.
Let $B$ be the region inside the bubble that is removed by the reverse bubble move.
We first assume that $i = q$, and so $B = Z$.
In this case let $e_1$ be the edge of $B$ that the snake path $\gamma$ terminates on. 
Let $x$ be the terminal point of $\gamma$.
Note that it is possible that the initial point of $\gamma$ is also on some edge of $B$.
However, in this case a prefix of $\gamma$ lies in $\bdy B$, since the source of $\gamma$ is peripheral.
Let $e_2$ and $e_3$ be the other edges of $B$.
Label the bigons of $B$ as $b_1$, $b_2$, and $b_3$, with $b_i$ opposite $e_i$.

We first clear $\gamma$ off of the bigon $b_1$ using the same process as in \refsec{2-0Move}.
Note that when pushing fingers along $\bdy B$, the finger is never part of the region $B$. 
This is because $\bdy B$ is embedded (because we are about to perform a reverse bubble move). 
Since the finger is never part of $B$, truncating (as in \refsec{TruncateSnake}) never replaces $\gamma$ with a prefix. 
Thus $x$ remains the terminal point of $\gamma$.

Next we push fingers in along $e_1$ from both sides towards $x$.
See \reffig{ClearBubble}.

\begin{figure}[htbp]
\subfloat[]{
\labellist
\hair 2pt \small
\pinlabel $e_1$ [b] at 103 81
\pinlabel $b_1$ at 153 197
\pinlabel $x$ [b] at 153 107
\endlabellist
\includegraphics[width = 0.43\textwidth]{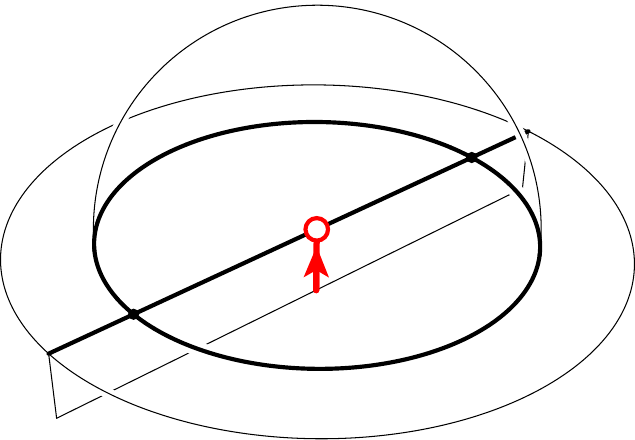}
\label{Fig:ClearBubble1}
}
\quad
\subfloat[]{
\labellist
\hair 2pt \small
\endlabellist
\includegraphics[width = 0.43\textwidth]{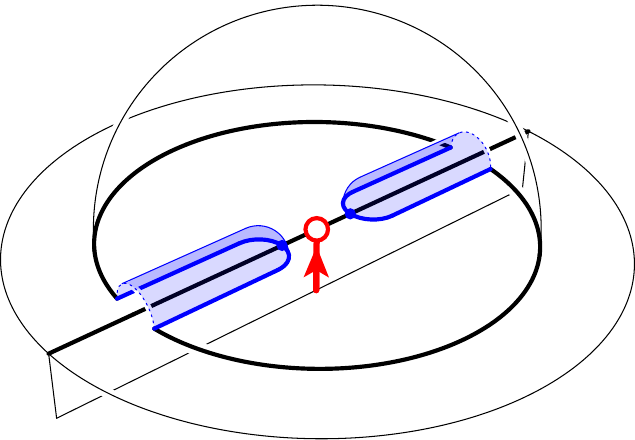}
\label{Fig:ClearBubble2}
}
\caption{We use a sequence of finger moves to clear a bubble.}
\label{Fig:ClearBubble}
\end{figure}

Again the fingers are not part of $B$, so $x$ remains the terminal point of $\gamma$.
Note that in pushing these fingers inwards, we never make an EV finger move (see \refsec{FingerMoves}). 
This is because exactly one end of the snake path $\gamma$ points into $B$, and it does so at $x$.
Therefore, if we meet the initial point of $\gamma$, we do so with an EH finger move. 
After we have applied the EH finger move, the initial point of $\gamma$ is no longer in contact with $B$.

Once we have pushed the fingers past all intersections with $\gamma$ (other than the terminal point $x$), the snake path $\gamma$ meets $B$ at $x$, as well as a disjoint collection of arcs.
These arcs fall into two subcollections.
One subcollection is contained in $b_2$, with endpoints only on $e_3$, while the 
other subcollection is contained in $b_3$, with endpoints only on $e_2$.
As in previous cases these can be removed by slide move $\textrm{A}^{-1}$, applied to outermost arcs first.

This done, $\gamma$ meets $B = Z$ only at $x$. 
We now remove the bubble and the snake by performing 2-0 moves, following \refdef{BuildSnake} in reverse.

Finally, we deal with the case that $p \leq i < q$, so $B \neq Z$.
This is very similar but slightly simpler than the case when $B = Z$. 
Here we choose the edge $e_1$ arbitrarily.
We clear the opposite bigon $b_1$ as before, then clear the edge $e_1$ again using two fingers, this time pointing towards an arbitrarily chosen point $x$ on $e_1 - \gamma$.
Again we clear $b_2$ and $b_3$ using slide move $\textrm{A}^{-1}$.
This done, $\gamma$ does not meet $\bdy B$.
We now perform the reverse bubble move.

\subsection{The proof is complete}

We have now dealt with all types of moves that can appear in the sequence $(\calF_i)$.
Following the outline given at the start of \refsec{TraversingWithSnakes}, 
this completes the proof of \refthm{ConnectivityWith0-2}. \qed

\appendix

\section{Many infinitely anchorable representations}
\label{Sec:Zariski}

We now recall a few definitions from the theory of character varieties over $\PSL(2, \CC)$, following~\cite[Section~3]{BoyerZhang98}.
Suppose that $M$ is a compact, connected, oriented three-manifold with non-empty boundary.
Suppose that the interior of $M$ admits a finite volume complete hyperbolic metric.  
We use $\rho_0$ to denote a discrete and faithful representation from $\pi_1(M)$ to $\PSL(2, \CC)$.

Suppose that $\rho \from \pi_1(M) \to \PSL(2,\CC)$ is any representation.
For any $\gamma$ in $\pi_1(M)$ we use $\tr^2_\rho(\gamma)$ to denote the square of the trace of $\rho(\gamma)$.
This gives a function $\tr^2_\rho$ from $\pi_1(M)$ to $\CC$;
we call this the \emph{character} of $\rho$. 
We use $X(M)$ to denote the resulting \emph{character variety}.
We use the notation $\tr^2(\gamma) \from X(M) \to \CC$ to denote the regular function that given the character of $\rho$ returns the squared trace $\tr^2_\rho(\gamma)$.

The goal of this appendix is to prove the following.

\begin{theorem}
\label{Thm:InfinitelyAnchorableZariskiNeighbourhood}
Suppose that $M$ is a compact, connected, oriented three-manifold with non-empty boundary.
Suppose that the interior of $M$ admits a finite volume complete hyperbolic metric.
Then there is a neighbourhood of $\rho_0$ (in the Zariski topology on $X(M)$) that consists solely of infinitely anchorable representations.
\end{theorem}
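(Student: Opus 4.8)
The plan is to produce an explicit Zariski-open subset $U \subseteq X(M)$ containing the character of $\rho_0$ and consisting entirely of characters of infinitely anchorable representations; we obtain $U$ by deleting from $X(M)$ finitely many proper Zariski-closed subsets, each avoiding the character of $\rho_0$. Since the interior of $M$ is finite-volume hyperbolic, $\bdy M$ is a union of tori; for each boundary torus $T_k$ ($k = 1, \dots, m$) I fix an elevation $c_k \in \Delta_M$ and a primitive element $\mu_k \in \Stab(c_k) \cong \ZZ^2$. First I would deal with anchorability. As $\rho_0$ sends each peripheral subgroup to a parabolic group, $\tr^2_{\rho_0}(\mu_k) = 4 \neq 0$. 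Conversely, suppose $\Fix_\rho(c_k) = \emptyset$: then \reflem{NonAnchorable} (applicable since $M$ is oriented and $c_k$ covers a torus) gives $\rho(\Stab(c_k)) \cong K_4$; since $\mu_k$ is primitive, $\rho(\mu_k)$ cannot be trivial (otherwise $\rho(\Stab(c_k))$ would be cyclic), so it is a nonidentity element of $K_4$, hence an involution, so $\tr^2_\rho(\mu_k) = 0$. Therefore $U_1 := \{\chi \in X(M) \mid \tr^2(\mu_k)(\chi) \neq 0 \text{ for all } k\}$ is Zariski-open, contains the character of $\rho_0$, and every representation with character in $U_1$ has $\Fix_\rho(c_k)\neq\emptyset$ for all $k$; since elevations of a fixed cusp are conjugate, such $\rho$ is anchorable.

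Next I would secure the infinite-orbit half of the definition. Intersect $U_1$ with the Zariski-open set $X^{\mathrm{irr}}(M)$ of irreducible characters — it contains the character of $\rho_0$, since a discrete faithful representation of a finite-volume hyperbolic manifold is irreducible — and then delete two further Zariski-closed sets. The first is $\bigcup_{\pi'} r_{\pi'}^{-1}(X^{\mathrm{red}}(\pi'))$, the union over the finitely many index-two subgroups $\pi' \le \pi_1(M)$ of the preimages of the reducible locus $X^{\mathrm{red}}(\pi')$ under the restriction morphism $r_{\pi'} \from X(M) \to X(\pi')$; this avoids the character of $\rho_0$ because restricting $\rho_0$ to the fundamental group of a finite cover again yields a discrete faithful, hence irreducible, representation. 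The second is the finite set $F$ of characters of representations whose image is isomorphic to one of $A_4$, $S_4$, $A_5$; $F$ is finite because $\pi_1(M)$ is finitely generated, and it avoids the character of $\rho_0$ because $\rho_0$ has infinite image. Let $U$ be the resulting Zariski-open set, which contains the character of $\rho_0$.

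Finally I would verify that every $\rho$ with $\chi_\rho \in U$ is infinitely anchorable. Such $\rho$ is anchorable, so choose $z \in \Fix_\rho(c_1)$ and set $\Gamma = \rho(\pi_1(M))$. First, $\Gamma$ is infinite: a finite subgroup of $\PSL(2,\CC)$ is cyclic (hence reducible), dihedral (in which case its index-two cyclic subgroup fixes the two endpoints of its rotation axis, so that subgroup is reducible), or one of $A_4, S_4, A_5$ — and each possibility is incompatible with $\chi_\rho \in U$. Now suppose $\Gamma \cdot z$ were finite. If $\lvert \Gamma \cdot z \rvert = 1$, then $\Gamma$ fixes $z$ and $\rho$ is reducible. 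If $\lvert \Gamma \cdot z \rvert = 2$, then $\Stab_\Gamma(z)$ has index two in $\Gamma$ and fixes both points of the orbit, so it is reducible; writing $\Stab_\Gamma(z) = \rho(\pi')$ for an index-two subgroup $\pi' \le \pi_1(M)$, the character $\chi_\rho$ lies in $r_{\pi'}^{-1}(X^{\mathrm{red}}(\pi'))$. If $\lvert \Gamma \cdot z \rvert \ge 3$, then the kernel of the permutation action $\Gamma \to \mathrm{Sym}(\Gamma \cdot z)$ fixes at least three points of $\bdy_\infty \HH^3$, hence is trivial, so $\Gamma$ is finite. Every case contradicts $\chi_\rho \in U$, so $\Gamma \cdot z$ is infinite, and $\rho$ is infinitely anchorable at $c_1$. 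Applying this with $\rho = \rho_0$ recovers, as it must, the fact that $\rho_0$ is itself infinitely anchorable.

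The main obstacle is not the elementary group theory above but confirming that the deleted loci are genuinely Zariski-closed: that $\tr^2(\gamma)$ is a regular function on $X(M)$ and that the reducible locus is Zariski-closed (both standard; see~\cite{BoyerZhang98}); that restriction along a finite-index subgroup descends to a morphism of $\PSL(2,\CC)$-character varieties, which makes each $r_{\pi'}^{-1}(X^{\mathrm{red}}(\pi'))$ closed; and that the "polyhedral" representations contribute only finitely many characters, which follows from the rigidity of representations of finite groups (the set $\Hom(Q, \PSL(2,\CC))$ is a finite union of conjugacy classes when $Q$ is finite). A lesser subtlety, flagged above, is that inside $U_1$ the element $\rho(\mu_k)$ may be trivial or elliptic, so the anchorability step must be routed through "$\rho(\Stab(c_k)) \not\cong K_4$" rather than through a single trace condition on $\mu_k$, and this is why $\mu_k$ is chosen primitive.
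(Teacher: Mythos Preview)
Your argument is correct and takes a genuinely different route from the paper's. The paper fixes one cusp and, using Klein's combination theorem, chooses a single non-peripheral element $\gamma$ so that in $\Gamma = \langle \mu, \lambda, \gamma \rangle$ every peripheral element is conjugate into the cusp group; it then imposes a short list of explicit inequalities of the form $\tr^2(w) \neq r$ (for words $w$ in $\mu, \lambda, \gamma$) and carries out a case analysis on whether $\rho(\mu), \rho(\lambda), \rho(\gamma)$ are parabolic, loxodromic, or elliptic, with the elliptic--elliptic case further split according to how the two axes sit in $\HH^3$. Your approach instead deletes the reducible locus, the preimages of the reducible loci of the finitely many index-two subgroups, and the finitely many characters with image $A_4$, $S_4$, or $A_5$, and then runs a short orbit-size trichotomy using the classification of finite subgroups of $\PSL(2,\CC)$. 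The paper's proof is more self-contained and writes the Zariski neighbourhood down by explicit equations in trace-squared coordinates, at the cost of a longer geometric case analysis and the appeal to the combination theorem; your proof is shorter and more structural, at the cost of invoking that restriction to a finite-index subgroup descends to a morphism of $\PSL_2$ character varieties and that the reducible locus is closed there---facts you correctly flag as the real work. One small redundancy: your separate verification that $\Gamma$ is infinite is not needed, since the case $|\Gamma \cdot z| \geq 3$ already forces $\Gamma$ finite and hence cyclic, dihedral, or polyhedral, each of which you have already excluded by the construction of $U$.
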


\begin{proof}
Our hypotheses imply that all boundary components of $M$ are tori.
Fix one such boundary component $C$.
Let $c \in \Delta_M$ be an elevation of $C$.
Fix a point $p \in C$.
Fix generators $\mu$ and $\lambda$ in $\pi_1(C, p)$.

Suppose that $\gamma$ lies in $\pi_1(M, p)$; 
let $\Gamma = \subgp{\mu, \lambda, \gamma}$ be the corresponding subgroup of $\pi_1(M)$.
Choosing $\gamma$ appropriately, and appealing to Klein's ``combination theorem''~\cite[III.16]{Klein83},
we may assume that all peripheral elements of $\pi_1(M, p)$ (in $\Gamma$) are conjugate (in $\Gamma$) into $\pi_1(C, p)$.
(For other, somewhat more modern, statements of the combination theorem see~\cite[page~210]{Fatou30} or~\cite[page~499]{Maskit65}.)


As we proceed we will impose (finitely many) algebraic inequalities of the form $\tr^2(w) \neq r$ where $w$ is an element in $\Gamma$ and where $r$ lies in $\CC$.  
We will always check that our inequalities hold at $\rho_0$.
Finally, we will prove that any representation $\rho$ satisfying our inequalities is infinitely anchorable.
\begin{equation}
\label{Eqn:NoInvolutions}
\tr^2(\mu) \neq 0, \quad \tr^2(\lambda) \neq 0, \quad \tr^2(\gamma) \neq 0
\end{equation}

An element of $\PSL(2, \CC)$ has trace equal to zero if and only if it is conjugate to an order two rotation~\cite[Theorem~4.3.1]{Beardon95}.  
Thus \refeqn{NoInvolutions} holds at $\rho_0$ because $\rho_0$ is faithful and because $\pi_1(M)$ is torsion free. 

From \refeqn{NoInvolutions} and from \reflem{NonAnchorable} we deduce that $\Fix_\rho(c)$ is non-empty. 
Therefore $\rho$ is anchorable at $c$.
Choosing framings for the finitely many other torus boundary components, and imposing \refeqn{NoInvolutions} for those meridians and longitudes, implies that $\rho$ is anchorable.
\begin{equation}
\label{Eqn:NoCommutation1}
\tr^2([\mu, \gamma]) \neq 4, \quad \tr^2([\lambda, \gamma]) \neq 4
\end{equation}

This holds at $\rho_0$ because the commutators $[\mu, \gamma]$ and $[\lambda, \gamma]$ are not conjugate in $\Gamma$ into $\pi_1(C, p)$
and because $\rho_0$ sends non-peripheral elements to loxodromics.

From \refeqn{NoCommutation1} we deduce that $\rho(\mu)$ and $\rho(\lambda)$ are not the identity.
Thus $\Fix_\rho(c)$ contains one or two points.
\begin{equation}
\label{Eqn:GammaNotParabolic}
\tr^2(\gamma) \neq 4
\end{equation}

This holds at $\rho_0$ because $\gamma$ is non-peripheral.
We impose this condition only to simplify our case analysis.

\begin{case*}
$\Fix_\rho(c)$ is a single point.
\end{case*}

Suppose that $\Fix_\rho(c) = \{z\}$.  
By~\cite[Theorem~4.3.6]{Beardon95}, since $\rho(\mu)$ and $\rho(\lambda)$ commute and are not of order two, they are parabolic.
Thus $\rho(\pi_1(C,p))$ is infinite.
\begin{equation}
\label{Eqn:NoCommutation2}
\tr^2([\mu, \gamma \mu \gamma^{-1}]) \neq 4, \qquad \tr^2([\lambda, \gamma \lambda \gamma^{-1}]) \neq 4
\end{equation}

This holds at $\rho_0$ because the commutators $[\mu, \gamma \mu \gamma^{-1}]$ and $[\lambda, \gamma \lambda \gamma^{-1}]$ are not conjugate in $\Gamma$ into $\pi_1(C, p)$.

By \refeqn{GammaNotParabolic} we have that $\rho(\gamma)$ is either elliptic or loxodromic.
We deduce from \refeqn{NoCommutation2} that $z$ is not a fixed point of $\rho(\gamma)$.
Let $A$ be the axis of $\rho(\gamma)$.
We deduce that $\rho(\pi_1(C,p)) \cdot A$ is infinite.
Thus the orbit $\rho(\Gamma) \cdot z$ is infinite.
Thus $\rho$ is infinitely anchorable, as desired.

\begin{case*}
$\Fix_\rho(c)$ is a pair of points.
\end{case*}

By \refeqn{GammaNotParabolic} we have that $\rho(\gamma)$ is not parabolic.
Thus $\Fix_\rho(\gamma)$ is also a pair of points.
By \refeqn{NoCommutation1}, by \refeqn{NoInvolutions}, and by~\cite[Theorem~4.3.6]{Beardon95} we have that $\Fix_\rho(c) \neq \Fix_\rho(\gamma)$.

\begin{subcase*}
$\rho(\gamma)$ is loxodromic.
\end{subcase*}

Suppose that $\Fix_\rho(c)$ = $\{z, w\}$ with, say, $z \notin \Fix_\rho(\gamma)$.
Thus the orbit $\rho(\subgp{\gamma}) \cdot z$ is infinite, and we are done.

\begin{subcase*}
$\rho(\mu)$ (or $\rho(\lambda)$) is loxodromic.
\end{subcase*}

Set $\delta = \gamma \mu \gamma^{-1}$.
Note that $\rho(\delta)$ is loxodromic.
By \refeqn{NoCommutation2} we have that $\delta$ does not commute with $\mu$.
Again applying~\cite[Theorem~4.3.6]{Beardon95} we have that $\Fix_\rho(c) \neq \Fix_\rho(\delta)$.
So we may apply the subcase immediately above with $\delta$ in place of $\gamma$.

\begin{subcase*}
$\rho(\mu)$, $\rho(\lambda)$, and $\rho(\gamma)$ are elliptic.
\end{subcase*}

Let $A$ be the common axis of $\rho(\mu)$ and $\rho(\lambda)$.
Let $B$ be the axis of $\rho(\gamma)$.

\begin{subsubcase*}
$A = B$.
\end{subsubcase*}

This implies that $\mu$ and $\gamma$ commute~\cite[Theorem~4.3.6]{Beardon95}, contradicting \refeqn{NoCommutation1}.

\begin{subsubcase*}
$A$ and $B$ meet at a point of $\HH^3$.
\end{subsubcase*}

Recall that $\Gamma = \subgp{\mu, \lambda, \gamma}$.
Thus $\rho(\Gamma)$ is conjugate into $\SO(3)$.
By \refeqn{NoCommutation1}, we have that $\rho(\Gamma)$ is not cyclic (or infinite cyclic).
By \refeqn{NoInvolutions}, we have that $\rho(\Gamma)$ is not dihedral (or infinite dihedral).
\begin{equation}
\label{Eqn:NoTorsion}
\tr^2(\mu), \tr^2(\lambda), \tr^2(\gamma) \neq 4\cos^2(\pi/n) \quad \text{for $n = 3, 4, 5$}
\end{equation}

This holds at $\rho_0$ because $\rho_0$ is faithful and $\pi_1(M)$ is torsion free. 

From \refeqn{NoTorsion} we deduce that $\rho(\Gamma)$ is not a platonic subgroup of $\SO(3)$.
We now work in $S^2 = \bdy_\infty\HH^3$ equipped with the round metric $d_{S^2}$.
We arrange matters so that $\rho(\gamma)$ acts on $S^2$ by isometries.
We further arrange matters so that $\Fix_\rho(c) = \{N, S\}$, the north and south poles.

\begin{claim*}
The orbit $\rho(\Gamma) \cdot N$ is infinite.
\end{claim*}

Given this, $\rho$ is infinitely anchorable, as desired.

\begin{proof}[Proof of Claim]
Suppose not.
Pick some $N'$ in the orbit that minimises $d_{S^2}(N, N')$.
Since $\rho(\Gamma)$ is neither cyclic nor dihedral, $\rho(\gamma)$ does not fix and does not interchange $N$ and $S$. 
Thus $N' \neq S$. 

Since the orbit is finite, we deduce that $\rho(\mu)$ is finite order.
By \refeqn{NoCommutation1} we have that $\rho(\mu)$ is not the identity.
By \refeqn{NoTorsion} we have that the order of $\rho(\mu)$ is at least six.
Taking $m =  \rho(\mu^k)$ for an appropriate power $k$, we find that 
\[
0 < d_{S^2}(N',m(N')) < d_{S^2}(N, N')
\]
This is a contradiction, and we are done.
\end{proof}

\begin{subsubcase*}
$A$ and $B$ meet, but only at a point of $\bdy_\infty \HH^3$.
\end{subsubcase*}

In this case, $\rho([\mu, \gamma])$ is parabolic, contradicting \refeqn{NoCommutation1}.

\begin{subsubcase*}
$A$ and $B$ are disjoint, even at $\bdy_\infty \HH^3$.
\end{subsubcase*}

In this case $\Gamma$ contains an element $\delta$ so that $\rho(\delta)$ is not elliptic (see~\cite[page~62]{Fatou30} or~\cite[page~1119]{LyndonUllman67}). 
In fact we may assume that $\rho(\delta)$ does not fix both endpoints of $A$.
We deduce that the orbit of $\Fix_\rho(c)$ under the action of $\rho(\delta)$ is infinite.
Thus $\rho$ is infinitely anchorable, as desired.
\end{proof}

\section{Constructing $L$--essential triangulations}
\label{App:HMP}

Our next result is inspired by, and used in, the work of \cite{PandeyWong24}, following \cite[Proposition~5.1]{HowieMathewsPurcell21}.
Recall that $\Fix_\rho$ is defined in \refdef{Anchorable}.

\begin{proposition}
\label{Prop:HMP}
Suppose that $M$ is a connected, compact, oriented 
three-manifold with boundary consisting of $m+1 \geq 2$ tori $T_0, T_1,\ldots,T_m$. 
Fix $k > 0$.
Suppose that $S = (s_k, \ldots, s_m)$ is a collection of slopes on $(T_k, \ldots, T_m)$.
Let $M(S)$ be the resulting Dehn filled manifold.
Let $\iota \from M \to M(S)$ be the resulting inclusion.
Suppose that $\rho_S \from \pi_1(M(S)) \to \PSL(2,\CC)$ is a representation. 
Let $\rho = \iota^*(\rho_S)$.
Let $c_0 \in \Delta_M$ be some fixed elevation of $T_0$.
Set $\Gamma = \rho(\pi_1(M))$ and $\Gamma_j = \rho(\pi_1(T_j))$.
\begin{enumerate}[label=(\Alph*)]
\item 
\label{Itm:InfinitelyAnchorableAtc_0}
Suppose that $\rho$ is infinitely anchorable at $c_0$. 
\item 
\label{Itm:AtLeast3}
Suppose that, for $j \geq k$, we have $|\Gamma_j| \geq 3$.  
\end{enumerate}

Then there exists an ideal triangulation $\calT$ of $M$ with the following properties.
\begin{enumerate}
\item 
\label{Itm:TwoTetrahedra}
In $M$, the cusp corresponding to $T_j$ for any $j \geq k$ meets exactly two ideal tetrahedra, $t_{j,1}$ and $t_{j,2}$. 
Each of these tetrahedra meets $T_j$ in exactly one ideal vertex.
\item 
\label{Itm:Fill}
We obtain an ideal triangulation $\calT(S)$ of the manifold $M(S)$ as follows.
For each filled cusp $T_j$, remove the ideal tetrahedra $t_{j,1}$ and $t_{j,2}$ and fold the resulting parallelogram in the boundary along its diagonal
as shown in~\cite[Figure~5]{PandeyWong24}.
\item 
\label{Itm:Basis}
There exists a choice of generators for $H_1(T_0;\ZZ)$, represented by curves $\mu_0$ and $\lambda_0$, such that $\mu_0$ and $\lambda_0$ meet the cusp triangulation inherited from $\calT$ in a sequence of arcs cutting off single vertices of triangles, without backtracking, and such that $\mu_0$ and $\lambda_0$ are disjoint from the tetrahedra $t_{j,1}$ and $t_{j,2}$, for all $j = k,\ldots, m$.
\item 
\label{Itm:RhoRegular}
There are solutions $Z$ and $Z(S)$ to the gluing equations for $\calT$ and $\calT(S)$ respectively, such that for $j \geq k$, deleting the coordinates $(j,1)$ and $(j,2)$ from the list $Z$ gives the list $Z(S)$.
Moreover, $\rho_Z$ and $\rho_{Z(S)}$ are conjugate to $\rho$ and $\rho_S$ respectively.
\end{enumerate}
\end{proposition}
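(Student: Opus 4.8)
The plan is to assemble $\calT$ from an explicit local model near each filled cusp, a sufficiently generic anchoring of $\rho$, and a relative version of the $L$--essential triangulation construction of Section~\ref{Sec:BuildLEssential} applied to the rest of the manifold; properties \refitm{TwoTetrahedra}--\refitm{RhoRegular} will then follow from the construction together with \reflem{RecoverRho}. This follows the scheme of \cite[Proposition~5.1]{HowieMathewsPurcell21}, with the extra care needed to stay $L$--essential. For the anchoring: since $\rho$ is infinitely anchorable at $c_0$ it is in particular anchorable, so by \reflem{Anchoring} it has an anchoring, which by the argument of \refcor{RepTriangulation} (choosing the label at $c_0$ with infinite $\Gamma$--orbit) may be taken with infinite image. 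As $\rho$ kills each filling slope $s_j$, the group $\Gamma_j = \rho(\pi_1(T_j))$ is cyclic, and \refitm{AtLeast3} says its generator has order at least three (or is of infinite order), hence is a genuine elliptic, parabolic, or loxodromic element. I would then impose finitely many further genericity inequalities on the labels at $T_1,\ldots,T_m$ — for example that distinct cusps receive distinct labels, and that the generator of $\Gamma_j$ does not fix the label of a cusp meeting the prospective fold edge $f_j$ — each an open non-empty condition on the equivariant choice. Call the resulting anchoring $L$, and let $L'$ be the anchoring of $\rho_S$ that it induces on $\Delta_{M(S)}$.

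Next, for each $j\ge k$ I would install the standard two--tetrahedron layered neighbourhood $W_j$ of $T_j$, as in \cite[Figure~5]{PandeyWong24}: ideal tetrahedra $t_{j,1}, t_{j,2}$, each meeting $T_j$ in a single ideal vertex, glued so that the induced two--triangle triangulation of the cusp $T_j$ has the filling slope $s_j$ among its edges, and so that deleting $t_{j,1},t_{j,2}$ and folding the resulting parallelogram along its $s_j$--diagonal realises the Dehn filling. Only finitely many edges have both ends inside $\bigcup_j W_j$, and only finitely many more are the fold edges $f_j$; the genericity of $L$ together with $|\Gamma_j|\ge 3$ makes all of these $L$--essential (respectively, $L'$--essential). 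This is precisely the place where \refitm{AtLeast3} is needed, and it gives \refitm{TwoTetrahedra} and \refitm{Fill} essentially by design.

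The remaining work is a relative version of the proof of \refthm{ExistsTriangulation}. I would start from a triangulation of $M$ that already contains the pieces $W_j$, a sufficiently subdivided neighbourhood of $T_0$, and two taut normal curves $\mu_0,\lambda_0$ in the cusp cross--section of $T_0$ representing a basis of $H_1(T_0;\ZZ)$, each cutting off single corners of triangles without backtracking and avoiding the (finitely many) corners coming from the $t_{j,\cdot}$. Then I would break the remaining $L$--inessential edges using \refalg{CreateSnakePath}, always placing the pieces $W_j$, the fold edges, and the curve--carrying region of $T_0$ into the avoidance set $\calA$, so that none of these is disturbed; this establishes \refitm{Basis}. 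The hard part will be to verify the connectivity hypotheses of \refalg{CreateSnakePath} relative to this now substantial set $\calA$ (in particular that $\cover{\calF} - \phi^{-1}(\closure{\calA})$ stays connected), and to arrange the initial triangulation so that $\bigcup_j W_j$ is ``simplicial enough'' to pass unchanged through the barycentric subdivision used to break the first batch of $L$--inessential edges.

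Finally, \reflem{RecoverRho} applied to the $L$--essential triangulation $\calT$, and to the $L'$--essential triangulation $\calT(S)$, yields solutions $Z$ and $Z(S)$ whose tetrahedron shapes are the cross--ratios of the $L$-- (respectively $L'$--) labels of the vertices, and with $\rho_Z$ and $\rho_{Z(S)}$ conjugate to $\rho$ and $\rho_S$. Since by \refitm{TwoTetrahedra} every tetrahedron other than the $t_{j,\cdot}$ has all four vertices on cusps that survive the filling, where $L$ and $L'$ agree, the list $Z(S)$ is precisely $Z$ with the coordinates $(j,1)$ and $(j,2)$ deleted for each $j\ge k$. This yields \refitm{RhoRegular} and completes the plan.
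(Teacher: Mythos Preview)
Your overall plan matches the paper's: follow \cite{HowieMathewsPurcell21} to install two tetrahedra at each filled cusp, choose an anchoring of $\rho$ with infinite image using hypothesis~\ref{Itm:InfinitelyAnchorableAtc_0}, then run the snake machinery of \refsec{BuildLEssential} relative to an avoidance set $\calA$ that protects both those cusps and the curves $\mu_0, \lambda_0$ on $T_0$. Your final paragraph, deducing \refitm{RhoRegular} from \reflem{RecoverRho} and the fact that the surviving tetrahedra have all vertices on unfilled cusps, is correct and is essentially how the paper finishes.

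There are, however, two genuine problems in the middle.

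First, you say the two-triangle triangulation of $T_j$ has the filling slope $s_j$ among its edges, and that one folds along the ``$s_j$--diagonal''. This cannot work: since $\rho(s_j)$ is trivial, any edge of $\calT$ with holonomy $s_j$ is $L$--inessential for \emph{every} anchoring $L$. In the paper (as in \cite{HowieMathewsPurcell21}) the three boundary slopes are $p_j, q_j, r_j$, arranged so that the Farey sum of $p_j$ and $q_j$ that is \emph{not} $r_j$ equals $s_j$; then $\rho(p_j) = \rho(q_j)$ generates $\Gamma_j$ and $\rho(r_j) = \rho(p_j)^2$, and hypothesis~\ref{Itm:AtLeast3} makes both nontrivial.

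Second, and more seriously, you propose to drop all of $W_j$ into the avoidance set and then invoke \refalg{CreateSnakePath} as a black box. But the vertex $v_j$ on $T_j$ carries the edge loops $p_j, q_j, r_j$, so the dual material region $B_j$ has $\pi_1$--translates sharing faces with itself in the universal cover. You are therefore in the genuinely \emph{weakly} $L$--essential situation: \refrem{StronglyCreateSnakePath} does not apply, and the $\Stab(\ell)$ hypothesis of \refalg{CreateSnakePath} must be checked by hand when the snake targets $B_j$. The paper does this by choosing the snake label $\ell \in \Gamma \cdot z_0$ to additionally avoid the (at most two) fixed points of $\rho(p_j)$ --- this is exactly where \ref{Itm:AtLeast3} enters. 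Moreover, the paper puts only the faces of $\bdy C_j$ into $\calA$, not all of $W_j$: the snake must still reach and absorb $B_j$, and it does so through faces dual to the extra edges created by the barycentric-subdivision-then-relayering step (precisely the step you flagged as problematic but did not resolve). Your ``genericity of $L$'' gesture is pointed at the right condition, but it does not engage with the actual input requirements of the algorithm, and placing $W_j$ wholesale in $\calA$ would leave $B_j$ unreachable.
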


\begin{remark}
By \refthm{InfinitelyAnchorableZariskiNeighbourhood}, if $M$ is hyperbolic and $\rho$ is close (in the Zariski, and therefore the euclidean, topology) to a discrete and faithful representation on $M$ then hypothesis 
\ref{Itm:InfinitelyAnchorableAtc_0} holds.
Furthermore, Equations~\ref{Eqn:NoInvolutions} and~\ref{Eqn:NoCommutation1} imply that hypothesis \ref{Itm:AtLeast3} holds.
\end{remark}

\begin{remark}
\label{Rem:EssentialsCollapse}
For ideal triangulations (with no material vertices), the concepts of weakly $L$--essential and $L$--essential (given in \refdef{LEssentialPartiallyIdeal}) agree.
\end{remark}

\begin{proof}[Proof of \refprop{HMP}]
As in the proof of \cite[Proposition~5.1]{HowieMathewsPurcell21}, we start with a material triangulation of $M$ where there is exactly one vertex in every boundary component $T_j$ for $j \geq 0$.
For each $j \geq k$ we layer tetrahedra onto the triangulation of $T_j$ until the three edges have slopes $p_j$, $q_j$, and $r_j$.
These are chosen so that the Farey sum of $p_j$ and $q_j$ that does not give $r_j$ instead gives $s_j$.
In $M(S)$ the curve $s_j$ is null-homotopic.
We deduce that $\rho(s_j)$ is trivial. 
Moreover, in $M(S)$ the curves $p_j$ and $q_j$ are identified.
Thus $\rho(r_j)$ is the square of $\rho(p_j) = \rho(q_j)$; 
furthermore these generate $\rho( \pi_1(T_j))$.
Thus $\Gamma_j = \rho( \pi_1(T_j))$ is cyclic.
Hypothesis~\ref{Itm:AtLeast3} now implies that $\rho(p_j)$ and $\rho(r_j)$ are non-trivial.

\begin{figure}[htbp]
\includegraphics[width = \textwidth]{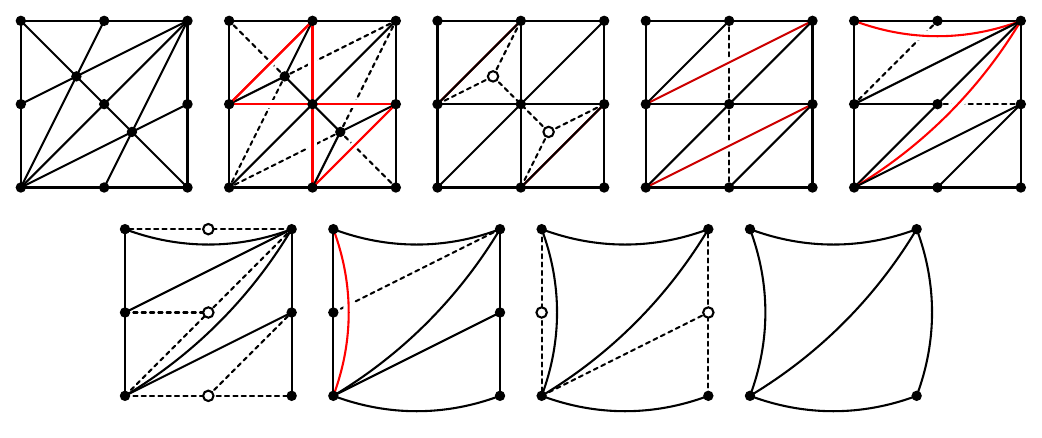}
\caption{Layering onto the barycentric subdivision to return it to the two triangle triangulation of the torus.
Each layered tetrahedron implements a bistellar move on the two-dimensional triangulation of the torus.
We draw the top edge of each 2-2 tetrahedron in red.
We draw the bottom edges of 2-2 and 3-1 tetrahedra with dotted lines.
From left to right along the top row followed by the bottom row we do six 2-2 moves, two 3-1 moves, two 2-2 moves, two 2-2 moves, two 3-1 moves, one 2-2 move, and one 3-1 move. }
\label{Fig:BarycentricTorusLayering}
\end{figure}

Now take the first barycentric subdivision of the triangulation.
For each $j \geq k$ we layer tetrahedra onto the boundary torus $T_j$ as shown in \reffig{BarycentricTorusLayering} to recover the two-triangle triangulation of $T_j$.
Let $v_j$ be the unique vertex in the triangulation of $T_j$.
The resulting triangulation has the property that the only material vertices with edge loops are the vertices $v_j$.

Following the proof of \cite[Proposition~5.1]{HowieMathewsPurcell21}, we now cone each $T_j$ (for $j \geq 0$) and its triangulation to a new ideal vertex $w_j$.
This gives a partially ideal triangulation, $\calS$ say, of $M$.

We now choose an anchoring $L$ of $\rho$, making sure that $L(c_0) = z_0$.
The triangulation $\calS$ is weakly $L$--essential (rather than $L$--essential), but only due to the loops based at the vertices $v_j$ for $j \geq k$.

\begin{figure}[htbp]
\labellist
\hair 2pt \small
\pinlabel $\mu_0$ [r] at 0 105
\pinlabel $\lambda_0$ [b] at 205 290
\endlabellist
\includegraphics[width = 0.4\textwidth]{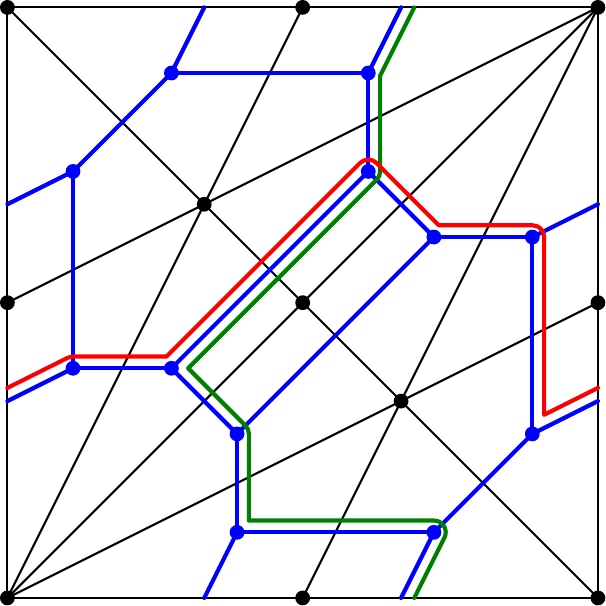}
\caption{The triangulation of $T_0$ after barycentric subdivision (drawn in black). 
The one-skeleton of $\bdy C_0$ (drawn in blue) is dual to the triangulation.  
The curves $\mu_0$ (drawn in red) and $\lambda_0$ (drawn in green) are carried by the one-skeleton of $\bdy C_0$.}
\label{Fig:BarycentricSubdivTorusCurves}
\end{figure}

Let $\calF_0$ be the foam dual to the triangulation $\calS$.
For each $j \geq k$ let $B_j$ be the material region dual to the material vertex $v_j$.
For each $j \geq 0$ let $C_j$ be the peripheral region dual to the ideal vertex $w_j$.
(There are many other material regions in the complement of $\calF_0$ that we do not name.)

We now choose curves $\mu_0$ and $\lambda_0$ in the one-skeleton of $\bdy C_0$, as shown in \reffig{BarycentricSubdivTorusCurves}.
These generate $H_1(T_0, \ZZ)$.
We set $\calA$ to be the union of the faces of $\bdy C_j$ (for $j \geq k$) together with the edges of $\bdy C_0$ that carry $\mu_0$ and $\lambda_0$.
Note that there are edges of $\bdy C_0$ that are not in $\calA$.
Also note that $\cover{\calF}_0 - \phi^{-1}\left(\closure{\calA}\right)$ is connected.

We proceed by induction.
As in the proof of \refthm{ExistsTriangulation}, we recursively build and inflate snake paths from elevations of $C_0$ to the material regions.
Suppose that $\calF_i$ is weakly $L$--essential.
Suppose that $\cover{\calF}_i - \phi^{-1}\left(\closure{\calA}\right)$ is connected.
If $\calF_i$ has no material regions then by \refrem{EssentialsCollapse}, $\calF_i$ is $L$--essential. 
In this case we are done with the recursive construction.

Suppose instead that $\cover{\calF}_i$ has a material region $D$.
We produce an $L$--self avoiding snake path $\gamma_i$ with target $D$.
There are two cases, as $\phi(D)$ is or is not one of the $B_j$.

Suppose first that $\phi(D)$ is not one of the $B_j$.
Then for any $\tau \in \pi_1(M)$ we have that $D$ and $\tau(D)$ do not share a face.
Applying hypothesis \ref{Itm:InfinitelyAnchorableAtc_0} we choose $g \in \pi_1(M)$ so that $\ell = \rho(g) \cdot z_0$ is distinct from the labels on all regions incident to $D$.
Thus we may apply \refalg{CreateSnakePath} with material region $D$, with label $\ell$, and with $\calA$ defined as above.
This gives us the $L$--self avoiding snake path $\gamma_i$ with target $D$.

Suppose instead that $\phi(D) = B_j$.
In this case we take the basepoint of $\pi_1(M)$ to be $v_j$.
Applying hypothesis \ref{Itm:InfinitelyAnchorableAtc_0} and recalling that $\rho(p_j)$ is non-trivial (so has at most two fixed points), we choose $g \in \pi_1(M, v_j)$ and define $\ell = \rho(g) \cdot z_0$ so that 
\begin{itemize}
\item $\ell$ is distinct from the labels on all regions incident to $D$ and
\item $\ell$ is not a fixed point of $\rho(p_j)$.
\end{itemize}

\begin{claim}
For each $\tau \in \Stab(\ell)$, there is no face with $D$ on one side and $\tau(D)$ on the other.
\end{claim}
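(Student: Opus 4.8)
The plan is to show that if a face of $\cover{\calF}_i$ has $D$ on one side and $\tau(D)$ on the other, then $\tau$ lies in the peripheral subgroup $\pi_1(T_j, v_j)$, so that $\rho(\tau)$ lies in the cyclic group $\Gamma_j$; the choice of $\ell$ off the fixed set of $\rho(p_j)$ will then finish the argument.

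First I would translate the hypothesis into a statement about edge loops. Such a face projects under $\phi$ to a face of $\calF_i$ having the material region $B_j$ on both sides, which is dual to an edge loop $e$ based at the material vertex $v_j$ of the triangulation dual to $\calF_i$; lifting $e$ starting at $D$ and recording its endpoint identifies $\tau$ with the based homotopy class of $e$ (the basepoint of $\pi_1(M)$ having been taken at $v_j$). So it suffices to show that every edge loop at $v_j$ lies in $\pi_1(T_j, v_j)$; in fact each such loop is one of the three edges $p_j$, $q_j$, $r_j$ of the two-triangle triangulation of $T_j$. To see this I would trace edge loops at $v_j$ through the construction: the first barycentric subdivision is simplicial and so has no edge loops; the layering onto $T_j$ depicted in Figure~\ref{Fig:BarycentricTorusLayering} restores the two-triangle triangulation of $T_j$, whose unique vertex is $v_j$ and whose only edges — necessarily loops at $v_j$ — are $p_j$, $q_j$, $r_j$, and layering introduces no edge loops elsewhere; and coning $T_j$ to $w_j$ adds only the non-loop edge $v_j w_j$. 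Finally, passing from $\calF_0$ to $\calF_i$ by inflating snake paths creates new faces only along snake tubes — one side of each such face is part of a region that has been merged into a peripheral source, hence never a descendant of $B_j$ — and otherwise only subdivides or dents existing faces without changing which pair of regions they separate. Hence the faces of $\cover{\calF}_i$ with a descendant of $B_j$ on both sides still arise from $p_j$, $q_j$, $r_j$, and $\tau \in \{p_j^{\pm 1}, q_j^{\pm 1}, r_j^{\pm 1}\} \subset \pi_1(T_j, v_j)$.

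With this in hand the rest is short. As established earlier in this proof, $\Gamma_j = \rho(\pi_1(T_j))$ is cyclic, generated by $\rho(p_j)$, with $\rho(q_j)$ equal to $\rho(p_j)^{\pm 1}$ and $\rho(r_j)$ equal to $\rho(p_j)^{\pm 2}$; since hypothesis~\ref{Itm:AtLeast3} gives $|\Gamma_j| \geq 3$, each of $\rho(p_j), \rho(q_j), \rho(r_j)$ is non-trivial, so $\rho(\tau) = \rho(p_j)^k$ for some $k \neq 0$. A non-trivial power of an element of $\PSL(2,\CC)$ acts on $\bdy_\infty\HH^3$ with the same fixed-point set as the element itself (parabolic, loxodromic, and elliptic cases; in the last case because $\rho(p_j)^k \neq 1$). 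Thus $\rho(\tau)$ and $\rho(p_j)$ fix the same points of $\bdy_\infty\HH^3$, and since $\ell$ was chosen not to be a fixed point of $\rho(p_j)$ we conclude $\rho(\tau)(\ell) \neq \ell$, i.e.\ $\tau \notin \Stab(\ell)$, contradicting $\tau \in \Stab(\ell)$. This proves the claim.

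The main obstacle I anticipate is the bookkeeping of the next-to-last paragraph: showing that no edge loop at $v_j$ other than $p_j$, $q_j$, $r_j$ is ever created — through the barycentric subdivision, the torus layering, the coning, and, most delicately, the snake-path inflations that build $\calF_i$. Once that is in place, the trace and fixed-point computation using the structure of $\Gamma_j$ is immediate.
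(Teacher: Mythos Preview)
Your approach is essentially the same as the paper's: prove the contrapositive by showing that any such $\tau$ must be one of $p_j^{\pm 1}, q_j^{\pm 1}, r_j^{\pm 1}$, then use the choice of $\ell$ away from the fixed set of $\rho(p_j)$ to conclude $\tau \notin \Stab(\ell)$. The paper's proof is considerably terser --- it simply asserts that $\phi(f)$ is dual to one of the three loop edges at $v_j$ and that the choice of $\ell$ rules out $\rho(\tau)$ fixing it --- whereas you spell out both the bookkeeping through the construction and the fixed-point argument for non-trivial powers; but the route is the same.
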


\begin{proof}
We prove the contrapositive.
Suppose that for $\tau \in \pi_1(M, v_j)$ there is a face $f$ of $\cover{\calF}$ with $D$ on one side of $f$ and with $\tau(D)$ on the other.
We now must show that $\tau \notin \Stab(\ell)$.

Since $\phi(D) = B_j$, we have that $\phi(f)$ is dual to one of the three edges connecting $v_j$ to itself. 
We deduce that $\tau$ is one of the slopes $p_j$, $q_j$, or $r_j$ (or their inverses), thought of as elements of $\pi_1(T_j, v_j)$.
Our choice of $g$ implies that $\ell =  \rho(g) \cdot z_0$ is not fixed by $\rho(\tau)$.
Thus $\tau \notin \Stab(\ell)$, as desired.
\end{proof}

Thus we may apply \refalg{CreateSnakePath} with material region $D$, with label $\ell$, and with  $\calA$ defined as above.
Again this gives us the $L$--self avoiding snake path $\gamma_i$ with target $D$.

In either case, \reflem{EssentialSnake} now gives us that $\calF_{i+1} = \calF_i(\gamma_i)$ is weakly $L$--essential.
Also in either case, by our choice of $\calA$, for $j \geq k$ the snake path $\gamma_i$ does not meet any of the regions $C_j$, nor does it meet any of the edges of $\bdy C_0$ that carry $\mu_0$ or $\lambda_0$. 
Thus, for $j \geq k$ the combinatorics of $\bdy C_j$ in $\calF_{i+1}$ is the same as it is in $\calF_{i}$.
Similarly, the edges of $\bdy C_0$ that carry $\lambda_0$ or $\mu_0$ do not change. 
In particular, $\calA$ does not change from $\calF_i$ to $\calF_{i+1}$.

Because no faces incident to either end of the snake path $\gamma_i$ are in $\calA$, it follows that no faces on the boundary of the snake are in $\calA$. 
Recalling that $\cover{\calF}_i - \phi^{-1}\left(\closure{\calA}\right)$ is connected and consulting Figures~\ref{Fig:InflateSnakePath1} and~\ref{Fig:InflateSnakePath3}, we conclude that 
$\cover{\calF}_{i+1} - \phi^{-1}\left(\closure{\calA}\right)$ is connected.

The recursive process terminates because at each step we reduce the number of material regions by one.
Let $\calF_n$ be the final foam. 
Let $\calT$ be the triangulation dual to $\calF_n$.
By \refrem{EssentialsCollapse} the triangulation $\calT$ is $L$--essential.

For each $j \geq k$ the combinatorics of $C_j$ in $\calF_n$ is the same as it is in $\calF_0$. 
That is, there are two vertices on $\bdy C_j$. 
These are dual to two tetrahedra in $\calT$, giving us \refitm{TwoTetrahedra}.
These tetrahedra $t_{j,1}$ and $t_{j,2}$ are combinatorially identical in $\calT$ and in $\calS$. 
Thus, by the choice of slopes $p_j$, $q_j$, and $r_j$, removing $t_{j,1}$ and $t_{j,2}$ and folding along the slope $r_j$ for each $j \geq k$ produces $\calT(S)$, an ideal triangulation of $M(S)$.
Thus we have $\refitm{Fill}$.
By our choice of $\calA$, the curves $\mu_0$ and $\lambda_0$ are combinatorially identical in $\calT$ and in $\calS$. 
In particular, in $\calF_n$ the curves $\mu_0$ and $\lambda_0$ do not pass through the vertices dual to $t_{j,1}$ and $t_{j,2}$ for each $j \geq k$.
This gives \refitm{Basis}.

By \reflem{RecoverRho}, there is a solution $Z$ to the gluing equations on $\calT$ so that $\rho_Z$ is conjugate to $\rho$. 
Now consider $\calT(S)$.
The set $\Delta_{M(S)}$ is obtained from $\Delta_M$ in two steps.
\begin{enumerate}
\item Remove preimages under the covering map $\phi$ of the tori $T_j$ for $j \geq k$, giving a set $\Delta'_M$.
\item Quotient $\Delta'_M$ by the relationship induced by folding along the slopes $r_j$.
\end{enumerate}
Let $\Phi \from \Delta'_M \to \Delta_{M(S)}$ be this quotient map.
We now define $L_S$, a labelling on $\Delta_{M(S)}$, as follows.
Suppose that $c$ lies in $\Delta_{M(S)}$.
Choose any element $c'$ lying in $\Phi^{-1}(c)$.
We set $L_S(c) = L(c')$.
This is well-defined because $L$ is given by $\rho$, which is the pullback of $\rho_S$.

Suppose that $e$ is an edge of $\cover{\calT}(S)$.
Suppose that $e$ has endpoints $c$ and $d$ in $\Delta_{M(S)}$.
Pick $e'$ to be any edge of $\cover{\calT}$ sent to $e$ under the quotient map.
Let $c'$ and $d'$ in $\Delta_M$ be the endpoints of $e'$.
Since $\calT$ is $L$--essential we have that $L(c')$ and $L(d')$ are distinct.
Thus $L_S(c) = L(c')$ and $L_S(d) = L(d')$ are distinct.
Thus $\calT(S)$ is $L_S$--essential.
In fact, the shapes given to the tetrahedra of $\calT(S)$ given by $L_S$ are the same as those given to the corresponding tetrahedra of $\calT$ by $L$.
Thus by deleting the pairs of tetrahedron shapes with coordinates $(j, 1)$ and $(j, 2)$ for $j \geq k$ we obtain the desired solution $Z(S)$.
This gives \refitm{RhoRegular}.
\end{proof}

\renewcommand{\UrlFont}{\tiny\ttfamily}
\renewcommand\hrefdefaultfont{\tiny\ttfamily}
\bibliographystyle{plainurl}
\bibliography{avoid_inessential_edges.bib}

\begin{thebibliography}{10}

\bibitem{Amendola05}
Gennaro Amendola.
\newblock A calculus for ideal triangulations of three-manifolds with embedded
  arcs.
\newblock {\em Math. Nachr.}, 278(9):975--994, 2005.
\newblock \href {https://doi.org/10.1002/mana.200310285}
  {\path{doi:10.1002/mana.200310285}}.

\bibitem{BarrettWestbury96}
John~W. Barrett and Bruce~W. Westbury.
\newblock Invariants of piecewise-linear {$3$}-manifolds.
\newblock {\em Trans. Amer. Math. Soc.}, 348(10):3997--4022, 1996.
\newblock \href {https://doi.org/10.1090/S0002-9947-96-01660-1}
  {\path{doi:10.1090/S0002-9947-96-01660-1}}.

\bibitem{Beardon95}
Alan~F. Beardon.
\newblock {\em The geometry of discrete groups}, volume~91 of {\em Graduate
  Texts in Mathematics}.
\newblock Springer-Verlag, New York, 1995.
\newblock Corrected reprint of the 1983 original.

\bibitem{BRVD05}
David~W. Boyd, Fernando Rodriguez-Villegas, and Nathan~M. Dunfield.
\newblock Mahler's measure and the dilogarithm (ii), 2005.
\newblock \href {http://arxiv.org/abs/math/0308041}
  {\path{arXiv:math/0308041}}.

\bibitem{BoyerZhang98}
S.~Boyer and X.~Zhang.
\newblock On {C}uller-{S}halen seminorms and {D}ehn filling.
\newblock {\em Ann. of Math. (2)}, 148(3):737--801, 1998.
\newblock \href {https://doi.org/10.2307/121031} {\path{doi:10.2307/121031}}.

\bibitem{BurtonHe23}
Benjamin~A. Burton and Alexander He.
\newblock Finding large counterexamples by selectively exploring the {P}achner
  graph.
\newblock In Erin~W. Chambers and Joachim Gudmundsson, editors, {\em 39th
  International Symposium on Computational Geometry (SoCG 2023)}, volume 258 of
  {\em Leibniz International Proceedings in Informatics (LIPIcs)}, pages
  21:1--21:16, Dagstuhl, Germany, 2023. Schloss Dagstuhl -- Leibniz-Zentrum
  f{\"u}r Informatik.
\newblock \href {https://doi.org/10.4230/LIPIcs.SoCG.2023.21}
  {\path{doi:10.4230/LIPIcs.SoCG.2023.21}}.

\bibitem{Casali95}
Maria~Rita Casali.
\newblock A note about bistellar operations on {PL}-manifolds with boundary.
\newblock {\em Geom. Dedicata}, 56(3):257--262, 1995.
\newblock \href {https://doi.org/10.1007/BF01263566}
  {\path{doi:10.1007/BF01263566}}.

\bibitem{Culler05}
Marc Culler.
\newblock The {$A$}--polynomial and the {FFT}.
\newblock Available from
  \url{http://homepages.math.uic.edu/~culler/talks/apolynomials.pdf}, 2005.

\bibitem{SnapPy24}
Marc Culler, Nathan~M. Dunfield, Matthias Goerner, and Jeffrey~R. Weeks.
\newblock Snap{P}y, a computer program for studying the geometry and topology
  of three-manifolds.
\newblock Available from \url{http://snappy.computop.org} (2024-01-01).

\bibitem{DimofteGaroufalidis13}
Tudor Dimofte and Stavros Garoufalidis.
\newblock The quantum content of the gluing equations.
\newblock {\em Geom. Topol.}, 17(3):1253--1315, 2013.
\newblock \href {https://doi.org/10.2140/gt.2013.17.1253}
  {\path{doi:10.2140/gt.2013.17.1253}}.

\bibitem{Dunfield99}
Nathan~M. Dunfield.
\newblock Cyclic surgery, degrees of maps of character curves, and volume
  rigidity for hyperbolic manifolds.
\newblock {\em Invent. Math.}, 136(3):623--657, 1999.
\newblock \href {https://doi.org/10.1007/s002220050321}
  {\path{doi:10.1007/s002220050321}}.

\bibitem{Fatou30}
Pierre Fatou.
\newblock {\em Th{\'e}orie des fonctions alg{\'e}briques. {Tome} {II}.
  {Fonctions} automorphes}.
\newblock Gauthier-Villars et Cie (Paris), 1930.
\newblock \url{https://gallica.bnf.fr/ark:/12148/bpt6k927069/}.

\bibitem{FSS19}
Steven Frankel, Saul Schleimer, and Henry Segerman.
\newblock From veering triangulations to link spaces and back again, 2022.
\newblock \href {http://arxiv.org/abs/1911.00006} {\path{arXiv:1911.00006}}.

\bibitem{GoernerZickert18}
Matthias Goerner and Christian~K. Zickert.
\newblock Triangulation independent {P}tolemy varieties.
\newblock {\em Math. Z.}, 289(1-2):663--693, 2018.
\newblock \href {https://doi.org/10.1007/s00209-017-1970-4}
  {\path{doi:10.1007/s00209-017-1970-4}}.

\bibitem{Goggin10}
Gerard Goggin.
\newblock {\em Global Mobile Media}.
\newblock Taylor \& Francis Group, London, United Kingdom, 2010.
\newblock \href {https://doi.org/10.4324/9780203842805}
  {\path{doi:10.4324/9780203842805}}.

\bibitem{Gordon99}
C.~McA. Gordon.
\newblock {$3$}-dimensional topology up to 1960.
\newblock In {\em History of topology}, pages 449--489. North-Holland,
  Amsterdam, 1999.
\newblock \href {https://doi.org/10.1016/B978-044482375-5/50016-X}
  {\path{doi:10.1016/B978-044482375-5/50016-X}}.

\bibitem{Hempel04}
J.~Hempel.
\newblock {\em 3-Manifolds}.
\newblock AMS Chelsea Publishing Series. AMS Chelsea Pub., American
  Mathematical Society, 2004.
\newblock \url{https://books.google.com/books?id=eqcJBAAAQBAJ}.

\bibitem{HodgsonRubinsteinSegermanTillmann15}
Craig~D. Hodgson, J.~Hyam Rubinstein, Henry Segerman, and Stephan Tillmann.
\newblock Triangulations of 3-manifolds with essential edges.
\newblock {\em Ann. Fac. Sci. Toulouse Math. (6)}, 24(5):1103--1145, 2015.
\newblock \href {https://doi.org/10.5802/afst.1477}
  {\path{doi:10.5802/afst.1477}}.

\bibitem{HowieMathewsPurcell21}
Joshua~A. Howie, Daniel~V. Mathews, and Jessica~S. Purcell.
\newblock A-polynomials, {P}tolemy equations and {D}ehn filling, 2021.
\newblock \href {http://arxiv.org/abs/2002.10356} {\path{arXiv:2002.10356}}.

\bibitem{KSS24b}
Tejas Kalelkar, Saul Schleimer, and Henry Segerman.
\newblock Connecting essential triangulations {II}: via 2-3 moves only, 2024.
\newblock In preparation.

\bibitem{Khovanov04}
Mikhail Khovanov.
\newblock sl(3) link homology.
\newblock {\em Algebr. Geom. Topol.}, 4:1045--1081, 2004.
\newblock \href {https://doi.org/10.2140/agt.2004.4.1045}
  {\path{doi:10.2140/agt.2004.4.1045}}.

\bibitem{Klein83}
Felix Klein.
\newblock Neue {B}eitr\"{a}ge zur {R}iemann'schen {F}unctionentheorie.
\newblock {\em Math. Ann.}, 21(2):141--218, 1883.
\newblock \href {https://doi.org/10.1007/BF01442920}
  {\path{doi:10.1007/BF01442920}}.

\bibitem{Lickorish99}
W.~B.~R. Lickorish.
\newblock Simplicial moves on complexes and manifolds.
\newblock In {\em Proceedings of the {K}irbyfest ({B}erkeley, {CA}, 1998)},
  volume~2 of {\em Geom. Topol. Monogr.}, pages 299--320. Geom. Topol. Publ.,
  Coventry, 1999.
\newblock \href {https://doi.org/10.2140/gtm.1999.2.299}
  {\path{doi:10.2140/gtm.1999.2.299}}.

\bibitem{KnotInfo24}
Charles Livingston and Allison~H. Moore.
\newblock Knotinfo: table of knot invariants, 2024.
\newblock \url{knotinfo.math.indiana.edu/descriptions/a_polynomial.html}.

\bibitem{LuoTillmannYang13}
Feng Luo, Stephan Tillmann, and Tian Yang.
\newblock Thurston's spinning construction and solutions to the hyperbolic
  gluing equations for closed hyperbolic 3-manifolds.
\newblock {\em Proc. Amer. Math. Soc.}, 141(1):335--350, 2013.
\newblock \href {https://doi.org/10.1090/S0002-9939-2012-11220-1}
  {\path{doi:10.1090/S0002-9939-2012-11220-1}}.

\bibitem{LyndonUllman67}
R.~C. Lyndon and J.~L. Ullman.
\newblock Groups of elliptic linear fractional transformations.
\newblock {\em Proc. Amer. Math. Soc.}, 18:1119--1124, 1967.
\newblock \href {https://doi.org/10.2307/2035812} {\path{doi:10.2307/2035812}}.

\bibitem{Maskit65}
Bernard Maskit.
\newblock On {K}lein's combination theorem.
\newblock {\em Trans. Amer. Math. Soc.}, 120:499--509, 1965.
\newblock \href {https://doi.org/10.2307/1994540} {\path{doi:10.2307/1994540}}.

\bibitem{Matveev07}
S.~Matveev.
\newblock {\em Algorithmic Topology and Classification of 3-Manifolds}.
\newblock Algorithms and Computation in Mathematics. Springer Berlin
  Heidelberg, 2007.
\newblock \url{https://books.google.com/books?id=vFLgAyeVSqAC}.

\bibitem{Moise77}
Edwin~E. Moise.
\newblock {\em Geometric topology in dimensions {$2$} and {$3$}}.
\newblock Graduate Texts in Mathematics, Vol. 47. Springer-Verlag, New
  York-Heidelberg, 1977.

\bibitem{NeumannYang99}
Walter~D. Neumann and Jun Yang.
\newblock Bloch invariants of hyperbolic {$3$}-manifolds.
\newblock {\em Duke Math. J.}, 96(1):29--59, 1999.
\newblock \href {https://doi.org/10.1215/S0012-7094-99-09602-3}
  {\path{doi:10.1215/S0012-7094-99-09602-3}}.

\bibitem{Pachner78}
Udo Pachner.
\newblock Bistellare \"{A}quivalenz kombinatorischer {M}annigfaltigkeiten.
\newblock {\em Arch. Math. (Basel)}, 30(1):89--98, 1978.
\newblock \href {https://doi.org/10.1007/BF01226024}
  {\path{doi:10.1007/BF01226024}}.

\bibitem{PandeyWong24}
Tushar Pandey and Ka~Ho Wong.
\newblock Geometry of fundamental shadow link complements and applications to
  the 1-loop conjecture, 2024.
\newblock \href {http://arxiv.org/abs/2308.06643} {\path{arXiv:2308.06643}}.

\bibitem{Piergallini88}
Riccardo Piergallini.
\newblock Standard moves for standard polyhedra and spines.
\newblock In {\em III Convegno Nazionale di Topologia}, number~18, pages
  391--414. Circ. Mat. Palermo, 1988.
\newblock Third National Conference on Topology (Italian) (Trieste, 1986).

\bibitem{Purcell20}
Jessica~S. Purcell.
\newblock {\em Hyperbolic knot theory}, volume 209 of {\em Graduate Studies in
  Mathematics}.
\newblock American Mathematical Society, Providence, RI, 2020.
\newblock \href {https://doi.org/10.1090/gsm/209} {\path{doi:10.1090/gsm/209}}.

\bibitem{RubinsteinSegermanTillmann19}
J.~Hyam Rubinstein, Henry Segerman, and Stephan Tillmann.
\newblock Traversing three-manifold triangulations and spines.
\newblock {\em Enseign. Math.}, 65(1-2):155--206, 2019.
\newblock \href {https://doi.org/10.4171/lem/65-1/2-5}
  {\path{doi:10.4171/lem/65-1/2-5}}.

\bibitem{Segerman12}
Henry Segerman.
\newblock A generalisation of the deformation variety.
\newblock {\em Algebr. Geom. Topol.}, 12(4):2179--2244, 2012.
\newblock \href {https://doi.org/10.2140/agt.2012.12.2179}
  {\path{doi:10.2140/agt.2012.12.2179}}.

\bibitem{Segerman17}
Henry Segerman.
\newblock Connectivity of triangulations without degree one edges under 2-3 and
  3-2 moves.
\newblock {\em Proc. Amer. Math. Soc.}, 145(12):5391--5404, 2017.
\newblock \href {https://doi.org/10.1090/proc/13485}
  {\path{doi:10.1090/proc/13485}}.

\bibitem{SegermanTillmann11}
Henry Segerman and Stephan Tillmann.
\newblock Pseudo-developing maps for ideal triangulations {I}: essential edges
  and generalised hyperbolic gluing equations.
\newblock In {\em Topology and geometry in dimension three}, volume 560 of {\em
  Contemp. Math.}, pages 85--102. Amer. Math. Soc., Providence, RI, 2011.
\newblock \href {https://doi.org/10.1090/conm/560/11093}
  {\path{doi:10.1090/conm/560/11093}}.

\bibitem{Sullivan99}
John~M. Sullivan.
\newblock The geometry of bubbles and foams.
\newblock In {\em Foams and emulsions ({C}arg\`ese, 1997)}, volume 354 of {\em
  NATO Adv. Sci. Inst. Ser. E: Appl. Sci.}, pages 379--402. Kluwer Acad. Publ.,
  Dordrecht, 1999.

\bibitem{Thurston80}
William Thurston.
\newblock Geometry and topology of 3-manifolds.
\newblock Available from \url{https://library.slmath.org/nonmsri/gt3m/}, 1980.

\bibitem{Thurston86}
William~P. Thurston.
\newblock Hyperbolic structures on {$3$}-manifolds. {I}. {D}eformation of
  acylindrical manifolds.
\newblock {\em Ann. of Math. (2)}, 124(2):203--246, 1986.
\newblock \href {https://doi.org/10.2307/1971277} {\path{doi:10.2307/1971277}}.

\bibitem{TuraevViro92}
V.~G. Turaev and O.~Ya. Viro.
\newblock State sum invariants of {$3$}-manifolds and quantum {$6j$}-symbols.
\newblock {\em Topology}, 31(4):865--902, 1992.
\newblock \href {https://doi.org/10.1016/0040-9383(92)90015-A}
  {\path{doi:10.1016/0040-9383(92)90015-A}}.

\end{thebibliography}
\end{document}